%

\documentclass[ejs]{imsart}

\RequirePackage[OT1]{fontenc}
\RequirePackage{amsthm,amsmath,amssymb}
\RequirePackage[numbers]{natbib}
\RequirePackage[colorlinks,citecolor=blue,urlcolor=blue]{hyperref}
\usepackage{graphicx}

\doi{10.1214/154957804100000000}
\pubyear{0000}
\volume{0}
\firstpage{0}
\lastpage{0}

\startlocaldefs
\numberwithin{equation}{section}
\theoremstyle{plain}
\newtheorem{theorem}{Theorem}[section]
\newtheorem{definition}{Definition}[section]
\newtheorem{example}{Example}[section]
\newtheorem{remark}{Remark}[section]
\newtheorem{corollary}{Corollary}[section]
\newtheorem{lemma}{Lemma}[section]
\newtheorem{proposition}{Proposition}[section]
\endlocaldefs

\begin{document}

\begin{frontmatter}

\title{Inequalities for the false discovery rate (FDR) under dependence
}
\runtitle{Inequalities for the FDR under dependence}


\author{\fnms{Philipp} \snm{Heesen}\ead[label=e1]{heesen@math.uni-duesseldorf.de}}
\and
\author{\fnms{Arnold} \snm{Janssen}\ead[label=e2]{janssena@math.uni-duesseldorf.de}}
\address{Heinrich-Heine University D\"usseldorf\\
Universit\"atsstr. 1, 40225 D\"usseldorf, Germany
\printead{e1,e2}}

\runauthor{P. Heesen and A. Janssen}

\begin{abstract}
Inequalities are key tools to prove FDR control of a multiple test. The present paper studies upper and lower bounds 
for the FDR under various dependence structures of $p$-values, namely independence, reverse martingale dependence 
and positive regression dependence on the subset (PRDS) of true null hypotheses. The inequalities are based on exact 
finite sample formulas which are also of interest for independent uniformly distributed $p$-values under the null. As 
applications the asymptotic worst case FDR of step up and step down tests coming from an non-decreasing rejection curve is 
established. In addition, new step up tests are established and necessary conditions for the FDR control are discussed. 
The reverse martingale models yield sharper FDR results than the PRDS models. Already in certain multivariate normal 
dependence models the familywise error rate of the Benjamini Hochberg step up test can be different from the desired level $\alpha$. 
The second part of the paper is devoted to adaptive step up tests under dependence. The well-known Storey estimator is 
modified so that the corresponding step up test has finite sample control for various block wise dependent $p$-values. 
These results may be applied to dependent genome data. Within each chromosome the $p$-values may be reverse martingale 
dependent while the chromosomes are independent. 
\end{abstract}

\begin{keyword}[class=AMS]
\kwd[Primary ]{62G10}
\kwd[; secondary ]{62G20}
\end{keyword}
\begin{keyword}
\kwd{false discovery rate (FDR)} 
\kwd{inequalities}
\kwd{multiple hypotheses testing}
\kwd{PRDS}
\kwd{reverse martingale dependence}
\kwd{blockwise dependence}
\kwd{adaptive Benjamini Hochberg methods}
\kwd{p-values}
\kwd{Storey test}
\end{keyword}

\footnotetext{Supported by the Deutsche Forschungsgemeinschaft (DFG).}




\end{frontmatter}

\section{Introduction}

High dimensional testing problems given by $n$ hypotheses and corresponding ordered $p$-values $p_{1:n}\leq \ldots \leq 
p_{n:n}$ of the $p$-value vector $(p_1,\ldots,p_n)$ are frequently judged by multiple tests, like step up and step down 
tests. These tests rely on the component wise comparison of the ordered $p$-values with a family of critical values 
$(\alpha_{i:n})_{i\leq n}$, see [{\color{blue}1-3}, {\color{blue}7-11},{\color{blue}17} {\color{blue}23-25}] for instance. 
The overall control of the error probability of first kind is often too restrictive and leads to very conservative multiple 
tests. Therefore, Benjamini and Hochberg \cite{benjamini_hochberg} promoted the false discovery rate (FDR) as error measure 
to control. The FDR is the expected ratio of the number of falsely rejected null hypotheses among the total number of rejections. 

Starting with the famous choice of critical values $\alpha_{i:n}=\frac{i}{n}\alpha$ by Benjamini and Hochberg 
\cite{benjamini_hochberg}, quite a lot of authors studied finite sample or asymptotic FDR control (by some given 
level $0<\alpha<1$) under various assumptions. Roughly speaking the finite sample research can be derived in two 
categories. When the critical values are deterministic, then different sufficient conditions and dependence 
concepts for the $p$-values were established in order to ensure FDR control at level $\alpha$, i.e. $FDR\leq\alpha$, 
see Benjamini and Hochberg \cite{benjamini_hochberg}, Benjamini and Yekutieli \cite{benjamini_yekutieli}, 
Blanchard and Roquain \cite{blanchard_roquain_2} and Finner et al. \cite{finner_dickhaus_roters} among others. 
In case of data dependent critical values $\hat \alpha_{i:n}$, which lead to adaptive multiple tests, typically 
the i.i.d. structure of the $p$-values of true null hypotheses is assumed to achieve FDR control, see Storey 
et al. \cite{storey_taylor_siegmund} and Sarkar \cite{sarkar} for instance. They include an estimation of the 
number of true null hypotheses in the critical values in order to exhaust the predetermined FDR level better. 
Another branch is the asymptotic FDR control, where milder assumptions like weak dependency may be considered.  

In this paper we will again revisit FDR inequalities for step up and step down tests. The results depend 
on three dependence structures for the $p$-values, namely the most restrictive basic independence (BI) model, 
the reverse martingale model and the positive regression dependence on a subset (PRDS) model, 
respectively, which are introduced in Section \ref{Basics} beside the basic notation. Martingale arguments were used in 
Chapter 3 of the dissertation of Scheer \cite{scheer} for 
the comparison of the FDR and the expected number of false rejections. Reverse martingale models naturally 
show up for instance for measurements under restrictions or in multivariate extreme value theory, see Example 
\ref{ExampleRevMartingale} which include a Marshall/Olkin type dependence structure. 
Section \ref{SecExamplesRevMart} gives some construction methods of reverse martingales and it is pointed out that the FDR of the 
classical Benjamini Hochberg step up test can exactly be calculated for the reverse 
martingale structure, whereas already strict inequalities hold under multivariate normal PRDS models, see Example \ref{NeuesExamplePosDep}. 
Section \ref{SecIneq} discusses FDR inequalities for all these models which include inequalities 
for the FDR and inequalities for the critical values of FDR controlling step up tests. New necessary and sufficient 
conditions for finite sample FDR control at level $\alpha$ are derived. In particular, critical values considered 
earlier by Finner et al. \cite{finner_dickhaus_roters} and Gavrilov et al. \cite{gavrilov_benjamini_sarkar} are 
discussed, see also Section \ref{SectionNewSUProcedures}. The inequalities can be used to modify the critical 
values of Gavrilov et al. \cite{gavrilov_benjamini_sarkar} for step up tests, confer Example \ref{ApplicationsExample002} 
for improved new tests. Theorem \ref{ApplicationsTheorem001} establishes an exact asymptotic formula for the worst case 
FDR of step up tests which come from an increasing rejection curve. Observe that our inequalities allow to treat the 
difficult case when the expected portion of true null hypotheses becomes maximal. That result can be compared with 
the asymptotic optimal rejection curve (AORC) of Finner et al. \cite{finner_dickhaus_roters} which compares concave 
rejection curves. For concave rejection curves, it is remarkable that the asymptotic worst case FDR value here is the 
same for the corresponding step up and step down tests, see Section \ref{NeuSec4-2}.

Section \ref{SecAdaptiveControl} deals with adaptive SU tests under dependence which has often been neglected in the past. 
The adaptive step up tests rely on conservative estimators $\hat n_0$ of the expected number of true null hypotheses. Mostly 
the basic independence model is assumed in the literature when the FDR of the adaptive test is shown to be controlled. 
We will point out that finite sample FDR control of adaptive step up tests is a difficult affair and can not be expected 
in general under dependence. Recall that the well-known Storey multiple test does not work under positive regression 
dependence on the subset of true null hypotheses, see Example \ref{Example6-1} for instance. We will give a simple condition 
which ensures asymptotic FDR control under different dependence structures, see Theorem \ref{AdaptiveControlTheorem1} and 
\ref{AdaptiveControlTheorem1Part2}. For fixed sequence of estimators these conditions may also be regarded as conditions 
for the possible dependence structures. 
Furthermore, finite sample control can be obtained for various adaptive step up tests under the reverse martingale model. 
Also necessary conditions for finite sample control will be presented. It is shown that under additional conditions 
some modified Storey estimators work for dependent but block wise independent $p$-values, see Theorem \ref{Theorem004}. 
Under the general assumptions these results are sharp and can not be improved, see Example \ref{Example6-1}. However, 
when all $p$-values are independent then the new blockwise test is conservative. 

Section \ref{Theoretical_results} contains exact technical FDR formulas which are used in our proofs. Some of them are 
of separate interest. Many  statements of this paper are applications of our central Lemma \ref{LemmaCentral}. Furthermore, 
all proofs are outlined in Section \ref{Theoretical_results}.

\section{Basic notation and dependence models}\label{Basics}

Throughout, we investigate models with different dependence structures. All of these models are based on the following 
basic model with random number of true null hypotheses. Let $(\Omega, \mathcal{A}, P)$ be a probability space and 
let 
\begin{equation}\label{model001}
 (\epsilon_i, U_i, \xi_i)_{i\leq n}: \Omega \longrightarrow (\{0,1\}\times [0,1]^2)^n
\end{equation}
be a multivariate random variable where $\epsilon_i=0$ codes the occurrence of a $p$-value $\xi_i$ of a false 
null hypothesis, for short false $p$-value, and $\epsilon_i=1$ the occurrence of a $p$-value $U_i$ of a true 
null hypothesis, for short true $p$-value, whose marginal distribution is the uniform distribution on $(0,1)$. 
Then the model of the $p$-values is given by 
\begin{equation}\label{model002}
 p_i = \epsilon_i U_i +(1-\epsilon_i) \xi_i, \quad 1\leq i \leq n, \quad \mbox{and}\quad  N:=\sum_{i=1}^n \epsilon_i,
\end{equation}
where $N$ is the random number of true p-values. This model includes the well studied mixture model of Efron et al. 
\cite{efron_tibshirani_storey}, where $(U_i)_i, (\xi_i)_i$, and $(\epsilon_i)_i$ are i.i.d. and jointly independent. 
Observe that here $N$ is naturally random. 
Throughout, true or false null hypotheses are identified with their $p$-values and for short the corresponding 
$p$-values are called ``true'' or ``false'', respectively. Since our multiple tests only rely on $p$-values this 
identification may be justified. 
Moreover, we define $p=(p_1,\ldots,p_n)$ and $\epsilon=(\epsilon_1,\ldots, \epsilon_n)$. 
Below, further assumptions about the dependence structure of the vector (\ref{model001}) are introduced. To avoid trivial 
cases let $E(N)$ be always positive and let us assume that our observations are the order statistics of the $p$-values, which 
are introduced as 
\begin{equation}\label{oderstat001}
 p_{1:n} \leq p_{2:n} \leq \ldots \leq p_{n:n}.
\end{equation}
Moreover, let $\hat F_n(t) = \frac{1}{n}\sum_{i=1}^n 1\{p_i\leq t\}$, $0\leq t\leq 1$, be the empirical cumulative distribution 
function of the $p$-values.

Let $\mathcal{B}([0,1]^n)$ denote the Borel sets of $[0,1]^n$. A set $C \in \mathcal{B}([0,1]^n)$ is said to be decreasing 
iff, $c'\in C$ and $c\leq c'$ component-by-component imply $c\in C$.

\begin{definition}[Dependence structures]\label{DependenceStructures}
\hspace{1cm} \\
 (a) Let $(\epsilon_i, \xi_i)_{i\leq n}$, $U_1,\ldots, U_n$ be independent. Then we call this submodel for the $p$-values 
(\ref{model002}) to be the {\bf basic independence (BI) model}. Note that $(\epsilon_i, \xi_i)_{i\leq n}$ 
is considered as one random variable whereas $U_1,\ldots,U_n$ are considered as individual random variables in terms of 
independence. \\
(b) Let 
\begin{equation}\label{PRDS}
 t \mapsto P(p\in C | p_i \leq t , \epsilon = \bar\epsilon) 
\end{equation}
be non-increasing for every decreasing set $C \in \mathcal{B}([0,1])^n$, $\bar\epsilon \in \{0,1\}^n$ and all $i$ 
with $\epsilon_i=1$. Then  $p$-value model (\ref{model002}) is called the  {\bf PRDS} model (positive regression 
dependent on the subset of true null hypotheses). \\
(c) Conditioned under $\epsilon$ let 
\begin{equation}\label{MartingaleDef}
 \frac{1\{p_i \leq t\}}{t},\quad 0< t \leq 1, \mbox{ for all } i \mbox{ with } \epsilon_i=1
\end{equation}
be a reverse martingale with respect to the reverse filtration 
$\mathcal{F}_t = \sigma((1\{p_j\leq s\},\epsilon_j) \, : \, 1\leq j \leq n, s\geq t)$. 
Then $p$-value model (\ref{model002}) is called {\bf reverse martingale} model.
\end{definition}

\begin{remark}\label{Remark2-1}
(a) The assumptions for the PRDS model in Definition \ref{DependenceStructures} (b) are a little bit weaker than the usual 
PRDS assumptions, see Finner et al. \cite{finner_dickhaus_roters} for instance. In the literature it is sometimes called 
weak PRDS. Nevertheless, we will call it PRDS model for brevity. \\
(b) The BI model is a submodel of the PRDS and reverse martingale model. Furthermore, the intersection of the PRDS and 
reverse martingale models is at least greater than the BI model. To see this regard $k$ independent disjoint blocks of 
$(U_1,\ldots,U_n)$ with maximal dependence in each block given here by the same uniformly distributed random variable. 
\end{remark}

We will see that the reverse martingale models yield sharper FDR results than the PRDS concept. A comparison of these models 
is included in Section \ref{SecExamplesRevMart}.

In the past literature usually conditional versions of the BI and PRDS model with deterministic 
$\epsilon_1,\ldots,\epsilon_n$ have been considered, see Benjamini and Hochberg \cite{benjamini_hochberg}, 
Benjamini and Yekutieli \cite{benjamini_yekutieli}, Blanchard and Roquain \cite{blanchard_roquain_2}, 
Finner et al. \cite{finner_dickhaus_roters} and Finner and Roters \cite{finner_roters} for instance. 
In all models defined above these conditional versions are included as special case.

\bigskip 
In this paper we mainly focus on step up tests (SU tests), which we briefly recall. Suppose that 
\begin{equation}\label{stepup001}
 0 < \alpha_{1:n} \leq \alpha_{2:n} \leq \ldots \leq \alpha_{n:n} < 1
\end{equation}
denote possibly data dependent critical values and set $\alpha_{0:n}=\alpha_{1:n}$ for convenience. The 
corresponding SU test is based on the number of rejections 
\begin{equation}\label{index001}
 R = \max\{ i \, : \, p_{i:n}\leq \alpha_{i:n}  \}
\end{equation}
and rejects the null hypotheses corresponding to the set of $p$-values 
$\{p_i \, : \,p_i \leq \alpha_{R:n}\}$. When the condition in (\ref{index001}) is empty no hypothesis is rejected 
and $R=0$ holds. Then equivalently all null hypotheses with $p$-values
\begin{equation}\label{rejected001}
 p_{i:n},\quad i\leq R 
\end{equation}
are rejected. Let 
\begin{equation}\label{rejected002}
 V=\#\{ i\, : \, p_i\leq \alpha_{R:n}, \epsilon_i = 1 \}
\end{equation}
be the unobservable number of falsely rejected true null hypotheses. The judgment of multiple tests is often done 
via the control of the celebrated ``false discovery rate'' (FDR) which is given by $FDR=E\left(\frac{V}{R}\right)$ 
(with $\frac{0}{0}=0)$. More generally we introduce the conditional false discovery rate
\begin{equation}\label{fdr0012}
 FDR(n_0):= E \left( \frac{V}{R} \Big| N=n_0 \right)
\end{equation}
as conditional expectation given $N=n_0$.
The conditional quantity (\ref{fdr0012}) is a special case since constant $N$ are included. 

Benjamini and Hochberg \cite{benjamini_hochberg} promoted the FDR as new error criterion competing against the well 
known familywise error rate (FWER) and provided a multiple test procedure controlling the FDR under certain assumptions.
The so-called Benjamini Hochberg test (BH test) is the linear SU test with fixed critical values
\begin{equation}\label{BHcritval}
 \alpha_{i:n} = \frac{i}{n}\alpha, \quad 1\leq i \leq n.
\end{equation}
In our setting and notation it was shown by Benjamini and Yekutieli \cite{benjamini_yekutieli} and 
Finner and Roters \cite{finner_roters} that 
\begin{equation}\label{FDR001}
 FDR(n_0) = \frac{n_0}{n} \alpha
\end{equation}
holds for the conditional expectation in the BI model. Benjamini and Hochberg \cite{benjamini_hochberg} previously 
showed that ``$\leq$'' holds in (\ref{FDR001}) for the BI model. Moreover, Benjamini and Yekutieli \cite{benjamini_yekutieli} 
proved that ``$\leq$'' holds in (\ref{FDR001}) for the PRDS model.  
In the same work, they also introduced a new SU test based on more conservative critical values
\begin{equation}\label{CritValBY}
 \alpha_{i:n}= \frac{i}{n \sum_{j=1}^n\frac{1}{j}}\alpha.
\end{equation} 
Benjamini and Yekutieli \cite{benjamini_yekutieli} pointed out that ``$\leq$'' again holds in (\ref{FDR001}) for this 
SU test under the basic model with arbitrary dependence structure of $(\epsilon_i, U_i, \xi_i)_{i\leq n}$. Blanchard 
and Roquain \cite{blanchard_roquain_2} showed that the critical values (\ref{CritValBY}) may also be replaced by 
\begin{equation}\label{CritValBR}
 \alpha_{i:n} := \frac{\alpha}{n}\int_0^i xd\nu(x),
\end{equation}
where $\nu$ is an arbitrary probability measure on $(0,\infty)$. For $\nu(\{i\})= (i\sum_{j=1}^n\frac{1}{j})^{-1}$, 
$i=1,\ldots, n$, the critical values correspond to (\ref{CritValBY}).
Adaptive versions based on (\ref{CritValBY}) and (\ref{CritValBR}) are presented in Theorem \ref{AdaptiveControlTheorem1Part2} under arbitrary dependence.


We will particularly focus on critical values coming from a continuous non-decreasing function 
\begin{equation}\label{ConcaveRejectionCurve}
 f:[0,1]\to [0,\infty) \mbox{ with } f(0)=0 \mbox{ and } f(x_0)=1
\end{equation}
for some $x_0 < 1$. We refer to $f$ as rejection curve. Moreover, let $f^{-1}$ denote 
the left continuous inverse of $f$ and let the deterministic critical values be generated via
\begin{equation}\label{critVal_rejection_curve_f}
 \alpha_{i:n} = f^{-1} \left( \frac{i}{n} \right), \quad 1 \leq i \leq n.
\end{equation}
We refer to $f^{-1}$ as critical value curve.
Note that the BH test is based on the Simes line $f(t) = t/\alpha$, $t\in [0,1]$, see 
Finner et al. \cite{finner_dickhaus_roters} for instance.

Finner et al. \cite{finner_dickhaus_roters} introduced the Asymptotic Optimal Rejection Curve (AORC) which is constructed 
to have FDR control by $\alpha$ in an asymptotic Dirac uniform (DU) setting given by $\xi_i=0$, $i=1,\ldots, n$. The AORC is given by 
\begin{equation}\label{AORC_curve}
 f_\alpha(t) = \frac{t}{t(1-\alpha)+\alpha}, \quad t\in [0,1],
\end{equation}
but since $f(1)=\alpha_{n:n}=1$, the above assumptions for rejection curves for SU tests are not fulfilled. There are 
several modifications of the AORC and corresponding SU tests to overcome this problem. For further details we refer to 
Finner et al. \cite{finner_dickhaus_roters}.

\section{Examples of reverse martingale models and a comparison with PRDS}\label{SecExamplesRevMart}

At the beginning it is shown that there exist positive dependent multivariate normal models which are PRDS without 
the martingale property. To prove this we will consider the following example. 

\begin{example}\label{NeuesExamplePosDep}
 Let $X_1$ and $Y$ be i.i.d. standard normal random variables with distribution function $\Phi$. \newline 
(a) Consider the PRDS model 
$$ (X_1,X_2) = \left(X_1, \frac{1}{\sqrt{2}} X_1 +  \frac{1}{\sqrt{2}} Y\right)
$$
and related $p$-values $(p_1,p_2) = (\Phi(X_1),\Phi(X_2))$. Then the familywise error rate of the BH step up test 
with critical values (\ref{BHcritval}) at level $\alpha=0.5$ and $n=n_0=2$ is $FWER=FDR(2)=\frac{7}{16}$, cf. 
(\ref{FDR001}), i.e. less than $\alpha=0.5$. \newline 
(b) For the negative dependence model 
$$ (X_1,X_2) = \left(X_1, -\frac{1}{\sqrt{2}} X_1 +  \frac{1}{\sqrt{2}} Y\right)
$$
the familywise error rate of the BH step up test is $FWER=FDR(2)=\frac{9}{16}$ and hence greater than $\alpha=0.5$. 
\end{example}

The proof is given in Section \ref{Theoretical_results}. Note that Gavrilov et al. \cite{gavrilov_benjamini_sarkar}, 
p. 625, already derived Monte Carlo results showing that the FWER of the BH step up test may be strictly below $\alpha$ 
under PRDS. In contrast to PRDS the reverse martingale models allow sharper FDR results, see Section \ref{SecIneq} and 
Lemma \ref{LemmaCentral}. The next remark summarizes this. 

\begin{remark}
Proposition \ref{AppicationsLemmaDirect} always implies the formula $FDR(n_0)=\frac{n_0}{n}\alpha$, see (\ref{FDR001}), 
for the BH step up test under the reverse martingale model, whereas only ''$\leq$`` holds under PRDS. Since an strict 
inequality ''$<$`` shows up in the PRDS Example \ref{NeuesExamplePosDep} (a), that normal dependence model is no reverse 
martingale. 
\end{remark}

In conclusion we see that reverse martingale models allow sharper FDR formulas as under PRDS. However, we do not know whether every 
reverse martingale model is PRDS.

The reverse martingale structure yields a rich class of $p$-values. The next example shows how to construct new 
reverse martingale models from known ones. In particular, reverse martingale measures on product spaces of $[0,1]$ 
are preserved under a lot of operations.  

\begin{example}\label{ExampleBspIngredients}
Let $\epsilon$, $I\subset\{i \, : \, \epsilon_i=1\}$ and $J\subset\{i\, : \, \epsilon_i=0\}$ be fixed with $|I|>0$.
Define $p$-values via the canonical projections $p_i : [0,1]^{|I|+|J|} \to [0,1]$. Then 
\begin{equation*}
 \mathcal{M}_{I}(I \cup J) = \left\{ \begin{array}{l}
                                           P \mbox{ distribution on } [0,1]^{|I|+|J|} : \\
					   P \mbox{ represents a reverse martingale model with true}  \\
					   \mbox{p-values } p_i, \, i\in I \mbox{ and false p-values } p_i, \, i\in J
                                         \end{array}
 \right\}
\end{equation*}
is the set of reverse martingale measures. \\ 
(a) $\mathcal{M}_{I}(I\cup J)$ is closed under mixtures including convex combinations. \\
(b) (Independent coupling of reverse martingale regimes) Suppose that there are partitions $\{i  :  \epsilon_i=1\}=\sum_{i=1}^r I_i$ and $\{i  :  \epsilon_i=0\}=\sum_{i=1}^r J_i$ 
with $I_i\neq \emptyset$ for all $i$ but $J_i$ is allowed to be empty. Whenever $P_i \in \mathcal{M}_{I_i}(I_i \cup J_i)$ 
holds for all $i\leq r$, then the product measure $\bigotimes_{i=1}^r P_i$ belongs to $\mathcal{M}_{\{i : \epsilon_i=1\}}(1,\ldots,n)$. \\
(c) (Optional switching of reverse martingales) It is well known that two independent reverse martingale models 
given by $P_1$ and $P_2 \in \mathcal{M}_{\{i : \epsilon_i=1\}}(1,\ldots,n)$ may be combined as follows. Let $\tau:[0,1]^n\to(0,1]$ 
denote a reverse stopping time w.r.t. model $P_1$. Whenever $1_{[0,\tau]}(p_i)=0$ holds, that $p$-value for the index $i$ comes from 
the $P_1$ model. In case $1_{[0,\tau]}(p_i)=1$, consider the $p$-value $\tilde p_i$ of $P_2$ and take the renormalized value 
$\tilde p_i \tau$ as new $p$-value for these coordinates. \\
(d) Let $p_1,\ldots,p_n:\Omega\to[0,1]$ be any random variables such that 
\begin{equation}
 t \mapsto \frac{\frac{1}{n} \sum_{i=1}^{n}1_{[0,t]}(p_i)}{nt}
\end{equation}
is a reverse martingale. Suppose that $i\mapsto\sigma(i)$ denotes a uniformly distributed permutation of $\{1,\ldots,n\}$ jointly 
independent of the $p_i$'s. Then the family of $p$-values $(p_{\sigma(1)},\ldots,p_{\sigma(n)})$ has the reverse martingale 
property (\ref{MartingaleDef}) for each component. 
\end{example}

Part (d) of that example is easy to prove and grew out of a discussion with Julia Benditkis which is kindly acknowledged. 

The subsequent example gives an explicit example which may have the following practical meaning. The statistician can only observe 
a concentration $X_i$ above a joint random ground level $Y$. It also occurs in multivariate extreme value theory and risk analysis 
when the $Z_i$ have a joint risk component $Y$.

\begin{example}\label{ExampleRevMartingale}
(Marshall and Olkin type dependence, see Marshall and Olkin \cite{nelson}) Let $X_1,\ldots,X_n,Y$ denote continuous, independent, 
real random variables, where $X_1,\ldots,X_n$ are i.i.d.. Consider $Z_i := \max(X_i,Y)$ for $1\leq i \leq n$. 
The transformed true $p$-values $p_i := H(Z_i)$, $i=1,\ldots,n$ given by $H(t)=P(Z_1\leq t)$ have the reverse martingale property, 
see Section \ref{Theoretical_results} for a proof. It is easy to verify, that the present model is also PRDS since it is 
based on a comonotone transformation of the i.i.d. model $X_1,\ldots,X_n,Y$. Notice that in case $n=2$ the negative variables 
$(-Z_1,-Z_2)$ correspond to the bivariate Marshall and Olkin \cite{nelson} model. 
\end{example}


Reverse martingale models can also be obtained allowing some dependence between ``true and false'' $p$-values. 

\begin{example}
(Dependence between the null and alternatives) The following models may be used as ingredients for 
Example \ref{ExampleBspIngredients} (b). Consider a distribution $P_1$ on $[0,1]^{|I_1|+|J_1|}$, where 
\begin{itemize}
 \item[(i)] the marginal distribution of the ``trues``  $(p_i)_{i\in I_1}$ belongs to $\mathcal{M}_{I_1}(I_1)$.  
 \item[(ii)] Suppose $\min(p_i : i\in I_1) \geq \max(p_i : i\in J_1)$ holds $P_1$ almost everywhere. 
\end{itemize}
Then $P_1 \in \mathcal{M}_{I_1}(I_1\cup J_1)$ holds. The proof follows the same line as in Example \ref{ExampleRevMartingale}. \\
\end{example}

\section{Inequalities for the FDR}\label{SecIneq}

In this section new inequalities are derived which are used in the proceeding chapters. 
We start with arbitrary non-decreasing deterministic critical values $0< \alpha_{1:n}\leq \ldots \leq \alpha_{n:n}<1$ 
and the following question. 
\begin{itemize}
 \item[\textbullet] What can be said about the FDR of the corresponding SU test given by a fixed model (\ref{model002})? 
\end{itemize}

The next inequalities rely on more technical results given in Section \ref{Theoretical_results}, in particular in 
Lemma \ref{LemmaCentral}. 

\begin{proposition}\label{AppicationsLemmaDirect}
(a) Assume the reverse martingale model (including the BI model) and consider the SU test with arbitrary deterministic 
critical values (\ref{stepup001}). Then we have 
 \begin{equation}\label{lower_and_upper_FDR_bound}
 \frac{E(N)}{n} \left( \min_{i \leq n} \frac{n\alpha_{i:n}}{i} \right) \leq FDR \leq 
\frac{E(N)}{n} \left( \max_{i \leq n} \frac{n\alpha_{i:n}}{i} \right).
\end{equation}
(b) Suppose that $P(R=j)>0$ holds. 
\begin{itemize}
 \item[(i)] The inequality $\frac{n\alpha_{j:n}}{j} < \max_{i\leq n} \left( \frac{n\alpha_{i:n}}{i} \right)$ implies the strict inequality
\end{itemize}
\begin{equation}
 FDR < \frac{E(N)}{n} \left( \max_{i \leq n} \frac{n\alpha_{i:n}}{i} \right).
\end{equation}
\begin{itemize}
 \item[(ii)] Conversely, $\frac{n\alpha_{j:n}}{j} > \min_{i\leq n} \left( \frac{n\alpha_{i:n}}{i} \right)$ implies
\end{itemize}
\begin{equation}
  \frac{E(N)}{n} \left( \min_{i \leq n} \frac{n\alpha_{i:n}}{i} \right) < FDR.
\end{equation}
(c) Under the PRDS model we still obtain  
\begin{equation}\label{upper_FDR_bound}
FDR \leq \frac{E(N)}{n} \left( \max_{i \leq n} \frac{n\alpha_{i:n}}{i} \right). 
\end{equation}
\end{proposition}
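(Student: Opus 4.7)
The plan is to reduce Proposition~\ref{AppicationsLemmaDirect} to the exact FDR representation supplied by Lemma~\ref{LemmaCentral}. Start from the standard decomposition
\begin{equation*}
 FDR = \sum_{j=1}^n \frac{1}{j}\, E\bigl[V\, 1\{R=j\}\bigr] = \sum_{j=1}^n \frac{1}{j}\sum_{i=1}^n E\bigl[\epsilon_i\, 1\{p_i\leq \alpha_{j:n}\}\, 1\{R=j\}\bigr].
\end{equation*}
Under the reverse martingale model the central lemma is expected to provide an identity of the form
\begin{equation*}
 FDR \;=\; \sum_{j=1}^n \frac{n\alpha_{j:n}}{j}\, w_j,\qquad w_j\geq 0,\quad \sum_{j=1}^n w_j \;=\; \frac{E(N)}{n},
\end{equation*}
where each $w_j$ is built from probabilities of $\{R=j\}$-type events on a reverse $\sigma$-algebra, scaled by $N/n$. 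As a sanity check, substituting the BH critical values $\alpha_{i:n}=(i/n)\alpha$ turns every factor $n\alpha_{j:n}/j$ into $\alpha$ and recovers $FDR = \alpha\cdot E(N)/n$, in agreement with (\ref{FDR001}).

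Once this representation is in hand, part (a) is immediate: a weighted average of the numbers $n\alpha_{j:n}/j$ with nonnegative weights summing to $E(N)/n$ lies between the two displayed extremes of (\ref{lower_and_upper_FDR_bound}). For part (b), the plan is to argue that $P(R=j)>0$ forces the weight $w_j$ to be strictly positive (because $w_j$ dominates a multiple of $P(R=j)$); whenever a positive weight is attached to a ratio strictly below the maximum, the weighted mean falls strictly below the maximum, and symmetrically for the minimum. Part (c) follows from the PRDS version of Lemma~\ref{LemmaCentral}, which is expected to supply only a one-sided inequality $FDR \leq \sum_j \tfrac{n\alpha_{j:n}}{j}\, \widetilde w_j$ with $\widetilde w_j\geq 0$ and $\sum_j \widetilde w_j = E(N)/n$; bounding each factor by its maximum yields (\ref{upper_FDR_bound}). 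The absence of a matching PRDS lower bound is consistent with Example~\ref{NeuesExamplePosDep}(a).

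The real work is therefore concentrated in Lemma~\ref{LemmaCentral}, and the main obstacle there is a measurability issue: the event $\{R=j\}$ does not a priori belong to the reverse filtration $\mathcal{F}_{\alpha_{j:n}}$. The natural strategy is to condition on $\epsilon$ and on $\mathcal{F}_{\alpha_{j:n}}$, then use (\ref{MartingaleDef}) to replace each indicator $1\{p_i\leq\alpha_{j:n}\}$, for $i$ with $\epsilon_i=1$, by $\alpha_{j:n}$ times an $\mathcal{F}_{\alpha_{j:n}}$-measurable factor, and to sum over $i$. The delicate step is to rewrite $\{R=j\}$ in the SU procedure as the intersection of $\{p_{j:n}\leq\alpha_{j:n}\}$ with an event that depends only on the configuration of $p$-values strictly above $\alpha_{j:n}$; in this form the latter event is $\mathcal{F}_{\alpha_{j:n}}$-measurable and the reverse-martingale identity passes through. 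Under PRDS the same conditioning is performed but only the monotonicity in (\ref{PRDS}) is available, so the identity is replaced by an inequality, giving (c). Summing the pieces over $j$ and taking expectations produces the representations used above and delivers the three parts of the proposition.
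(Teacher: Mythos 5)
Your treatment of parts (a) and (c) is essentially the paper's own argument in different notation: with $\gamma(i)=n\alpha_{i:n}$ and $w_j=E\bigl(V\,1\{R=j\}\bigr)/(n\alpha_{j:n})$, your representation $FDR=\sum_j\frac{n\alpha_{j:n}}{j}w_j$ with $\sum_j w_j=E\bigl(V/\gamma(R)\bigr)$ is exactly the paper's identity $FDR=E\bigl(\tfrac{V}{\gamma(R)}\cdot\tfrac{\gamma(R)}{R}\bigr)$, and Lemma \ref{LemmaCentral}(a) (equality, reverse martingale) respectively Remark \ref{RemarkCentral}(c) (inequality, PRDS) give $\sum_j w_j=E(N)/n$ respectively $\leq E(N)/n$; bounding $\gamma(R)/R$ by its extremes then yields (a) and (c). A peripheral remark: the measurability worry you raise about the central lemma is not actually an obstacle for SU tests, since $\{R=j\}=\{n\hat F_n(\alpha_{j:n})\geq j\}\cap\bigcap_{i>j}\{n\hat F_n(\alpha_{i:n})<i\}$ is $\mathcal{F}_{\alpha_{j:n}}$-measurable by construction of the reverse filtration; the paper packages the same computation as the discrete optional stopping theorem applied to the reverse stopping time $\tau=\alpha_{R:n}$.

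The one genuine gap is in part (b). You justify strictness by asserting that $w_j$ ``dominates a multiple of $P(R=j)$'', but $w_j=E\bigl(V\,1\{R=j\}\bigr)/(n\alpha_{j:n})$ is positive only if $P(R=j,\,V>0)>0$, and $P(R=j)>0$ does not imply this. For instance, take $n=2$, $\epsilon=(1,0)$ deterministic, $p_1=U_1$ uniform, $p_2=\alpha_{1:2}/2$ deterministic, and $2\alpha_{1:2}<\alpha_{2:2}$: then $P(R=1)=1-\alpha_{2:2}>0$ but $V=0$ on $\{R=1\}$, so $w_1=0$, and indeed $FDR=\alpha_{2:2}/2$ attains the upper bound of (\ref{lower_and_upper_FDR_bound}) even though $\frac{n\alpha_{1:n}}{1}<\max_i\frac{n\alpha_{i:n}}{i}$. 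So your step fails, and the hypothesis actually needed in (b) is $P(R=j,\,V>0)>0$ --- precisely what the companion Lemma \ref{ApplicationsLemma001} assumes for its strict versions. In fairness, the paper's own proof of (b) is a one-line assertion that the first inequality ``is actually strict'' and is open to the same objection; but since you supplied an explicit reason and that reason is false, you should either adopt the stronger hypothesis or prove that in your setting $P(R=j)>0$ forces $P(R=j,V>0)>0$, which it does not in general.
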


Observe that Example \ref{NeuesExamplePosDep} gives a counterexample that the lower bound in (\ref{lower_and_upper_FDR_bound}) 
does not hold under PRDS. 

With different methods Guo and Rao \cite{guo_rao} already showed that the upper bound in (\ref{lower_and_upper_FDR_bound}) holds
under the PRDS property. Moreover, Sarkar \cite{sarkar2002} derived several inequalities and exact expressions for the FDR for 
so-called generalized step-up-down tests. These inequalities are then used as key tools to prove FDR control of an step-up-down 
test basically with Benjamini Hochberg critical values (\ref{BHcritval}) under the PRDS assumption and a further step-down test 
under multivariate total positivity of order 2 (MTP$_2$). 

Under regularity assumptions the inequalities are asymptotically sharp. We refer to Section \ref{Theoretical_results} 
and Lemma \ref{ApplicationsLemma003}.

For deterministic critical values let us discuss the assumption 
\begin{equation}\label{SecApplications002}
 j\mapsto \frac{\alpha_{j:n}}{j} \quad \mbox{is non-decreasing.}
\end{equation}
It is easy to verify that (\ref{SecApplications002}) holds for the critical values (\ref{critVal_rejection_curve_f}) 
which come from a concave rejection curve. 
Under (\ref{SecApplications002}) Benjamini and Yekutieli \cite{benjamini_yekutieli} showed that Dirac uniform (DU) configurations 
(i.e. $\xi_i=0$) are least favorable parameter 
configurations for the FDR in the BI model for fixed $N=n_0$. Let us assume that the critical values with 
(\ref{SecApplications002}) lead to overall finite sample FDR control for the BI model, the PRDS model or the 
martingale model, respectively. Then the subsequent results investigate necessary conditions for the critical values 
$\alpha_{i:n}$ itself and the following question can be treated. 
\begin{itemize}
 \item[\textbullet] What can be said about the critical values $\alpha_{i:n}$ when the FDR is controlled by $FDR \leq \alpha$ 
for all distributions given by a specified class of submodels for fixed $n$? 
\end{itemize}

\begin{lemma}\label{ApplicationsLemma002}
 Suppose that the SU test with deterministic critical values (\ref{stepup001}), satisfying (\ref{SecApplications002}), 
always controls the $FDR$ at level $\alpha$ (i.e. $FDR \leq \alpha$) under all distributions of the BI model.\\
(a) A necessary condition is then $\alpha_{j:n} \leq \frac{j\alpha}{n+1-j}$ for all $ 1\leq j \leq n$.\\
(b) Suppose that in addition to (\ref{SecApplications002}) we have $\frac{\alpha_{k:n}}{k} < \frac{\alpha_{k+1:n}}{k+1}$ for one $k<n$. 
Then we have $\alpha_{j:n} < \frac{j\alpha}{n+1-j}$ for all $j \leq k$.
\end{lemma}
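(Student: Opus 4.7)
The plan is to prove both parts simultaneously by applying Lemma~\ref{LemmaCentral} to a carefully chosen Dirac-uniform (DU) configuration of the BI model. For each fixed $j\in\{1,\ldots,n\}$ take $\epsilon_1=\cdots=\epsilon_{j-1}=0$ and $\epsilon_j=\cdots=\epsilon_n=1$ deterministically, set the false-null $p$-values $\xi_1,\ldots,\xi_{j-1}$ all to $0$, and let the true-null $p$-values $U_j,\ldots,U_n$ be i.i.d.\ uniform on $(0,1)$. This is a valid BI submodel with $N=n+1-j$ almost surely, so the standing hypothesis $FDR\le\alpha$ translates into $FDR(n_0)\le\alpha$ for $n_0:=n+1-j$.

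In this configuration the $j-1$ zero $p$-values are always rejected, hence $R\ge j-1$ and the number of falsely rejected true nulls is $V=R-(j-1)$. I then invoke Lemma~\ref{LemmaCentral} to obtain a representation of the form
\begin{equation*}
FDR(n_0)\;=\;n_0\sum_{k=j}^{n}\frac{\alpha_{k:n}}{k}\,Q_k
\end{equation*}
with nonnegative weights $Q_k$ summing to $1$. Combining this with (\ref{SecApplications002}), which gives $\alpha_{k:n}/k\ge \alpha_{j:n}/j$ for every $k\ge j$, the bound
\begin{equation*}
FDR(n_0)\;\ge\;n_0\cdot\frac{\alpha_{j:n}}{j}\;=\;\frac{(n+1-j)\alpha_{j:n}}{j}
\end{equation*}
is immediate, and assertion (a) follows from $FDR(n_0)\le\alpha$.

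For part (b), the extra hypothesis together with (\ref{SecApplications002}) yields $\alpha_{j:n}/j\le \alpha_{k:n}/k<\alpha_{k+1:n}/(k+1)$ whenever $j\le k$. Writing $k':=k+1\le n$ and isolating the $Q_{k'}$-term in the lower bound gives
\begin{equation*}
FDR(n_0)\;\ge\;\frac{(n+1-j)\alpha_{j:n}}{j}+n_0 Q_{k'}\Bigl(\frac{\alpha_{k':n}}{k'}-\frac{\alpha_{j:n}}{j}\Bigr),
\end{equation*}
which is strictly larger than $(n+1-j)\alpha_{j:n}/j$ as soon as $Q_{k'}>0$, forcing $\alpha_{j:n}<j\alpha/(n+1-j)$.

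The main obstacle is obtaining the representation with $\sum_k Q_k=1$ rather than just the trivial decomposition $FDR(n_0)=\sum_{k=j}^{n}((k-j+1)/k)\,P(R=k)$; starting from the trivial one and using only $(k-j+1)/k\ge 1/j$ produces the weaker bound $\alpha_{j:n}\le n\alpha/(n+1-j)$. The decisive gain of the factor $j/n$ requires the identity $\sum_k(k-j+1)P(R=k)/\alpha_{k:n}=n_0$, a nontrivial consequence of the Benjamini--Yekutieli type leave-one-out argument under DU that is packaged inside Lemma~\ref{LemmaCentral}. The remaining positivity $Q_{k'}>0$ is routine: the event $\{R=k'\}$ is a positive Lebesgue measure region in $[0,1]^{n_0}$ under the strict inequalities of (\ref{stepup001}).
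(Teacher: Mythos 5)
Your proposal is correct and is essentially the paper's own argument: both apply Lemma~\ref{LemmaCentral}(a) to the Dirac-uniform configuration with $n_0=n+1-j$, and your weighted representation $FDR(n_0)=n_0\sum_k (\alpha_{k:n}/k)Q_k$ with $Q_k=(k-j+1)P(R=k)/(n_0\alpha_{k:n})$ is just a spelled-out form of the paper's one-line bound $\tfrac{n_0}{n}=E\bigl(\tfrac{V}{R}\cdot\tfrac{R}{\gamma(R)}\bigr)\le E\bigl(\tfrac{V}{R}\bigr)\tfrac{j}{\gamma(j)}$, using the same facts that $R\ge j$ on $\{V>0\}$ and that $i\mapsto \alpha_{i:n}/i$ is non-decreasing. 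Part (b) likewise matches the paper's use of $P(R=k+1\,|\,n_0,\bar f)>0$ to upgrade the inequality to a strict one.
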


\begin{corollary}\label{ApplicationsCorollary001}
Consider the assumptions of Lemma \ref{ApplicationsLemma002}. \\
(a) The inequality $\alpha_{1:n} \leq \frac{\alpha}{n}$ always holds.\\
(b) If $\alpha_{1:n}=\frac{\alpha}{n}$ then the SU test is already a BH test at level $\alpha$. 
Otherwise, $\alpha_{1:n} < \frac{\alpha}{n}$ holds.\\
(c) If $\alpha_{1:n}=\frac{\beta}{n}$ for some $\beta \leq \alpha$, then always $FDR \geq \frac{\beta E(N)}{n}$ follows.
\end{corollary}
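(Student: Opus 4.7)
The plan is to deduce all three parts as immediate consequences of the results that have already been established, namely Lemma \ref{ApplicationsLemma002} and the lower bound of Proposition \ref{AppicationsLemmaDirect}(a), together with the monotonicity assumption (\ref{SecApplications002}).

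For part (a), I would simply specialize Lemma \ref{ApplicationsLemma002}(a) to $j=1$: the upper bound $\alpha_{1:n}\le \frac{1\cdot\alpha}{n+1-1}=\frac{\alpha}{n}$ is immediate.

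For part (b), I would argue by the contrapositive of Lemma \ref{ApplicationsLemma002}(b). Suppose $\alpha_{1:n}=\frac{\alpha}{n}$. If there were some $k<n$ with $\frac{\alpha_{k:n}}{k}<\frac{\alpha_{k+1:n}}{k+1}$, then Lemma \ref{ApplicationsLemma002}(b) applied at $j=1$ would force $\alpha_{1:n}<\frac{\alpha}{n}$, a contradiction. Hence the non-decreasing sequence $j\mapsto \alpha_{j:n}/j$ in (\ref{SecApplications002}) admits no strict increase and is therefore constant, equal to its value at $j=1$, namely $\alpha/n$. Thus $\alpha_{j:n}=\frac{j\alpha}{n}$ for every $j$, which are precisely the BH critical values (\ref{BHcritval}). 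If $\alpha_{1:n}\neq \alpha/n$, then part (a) already gives $\alpha_{1:n}<\alpha/n$, which completes (b).

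For part (c), I would invoke the lower bound in Proposition \ref{AppicationsLemmaDirect}(a); this is admissible since the BI model is a submodel of the reverse martingale model (Remark \ref{Remark2-1}(b)). Under (\ref{SecApplications002}) the map $i\mapsto n\alpha_{i:n}/i$ attains its minimum at $i=1$, where its value equals $n\alpha_{1:n}=\beta$. Hence
\begin{equation*}
FDR\;\ge\;\frac{E(N)}{n}\min_{i\le n}\frac{n\alpha_{i:n}}{i}\;=\;\frac{\beta\,E(N)}{n},
\end{equation*}
as desired.

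None of the steps presents a genuine obstacle; the only point that requires a little care is the observation in part (b) that a non-decreasing sequence with no strict increase is constant, which is used to pass from the pointwise bound $\alpha_{1:n}=\alpha/n$ to the full identification of the BH test. Everything else is a direct instantiation of the previously proven results.
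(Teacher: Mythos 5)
Your proposal is correct and follows essentially the same route as the paper: (a) is Lemma \ref{ApplicationsLemma002}(a) at $j=1$, (b) is the contrapositive of Lemma \ref{ApplicationsLemma002}(b) combined with the observation that a non-decreasing sequence without strict increases is constant, and (c) is the reverse-martingale lower bound (the paper cites Lemma \ref{ApplicationsLemma001}(b) rather than Proposition \ref{AppicationsLemmaDirect}(a), but these are the same estimate derived from Lemma \ref{LemmaCentral}).
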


\begin{remark} 
(a) Consider the Dirac uniform configuration DU$(n_0)$ with $N=n_0$ and $\xi_i=0$, $i=1,\ldots,n$, for the reverse martingale 
model. Under (\ref{SecApplications002}) the lower bound in (\ref{lower_and_upper_FDR_bound}) can then be improved following 
the lines of Proposition \ref{AppicationsLemmaDirect} for $DU(n_0)$ by 
\begin{equation}\label{Neue_lower_bound}
 n_0 \frac{\alpha_{n+1-n_0:n}}{n+1-n_0} \leq FDR_{DU(n_0)}.
\end{equation}
(b) The statements of Lemma \ref{ApplicationsLemma002} and Corollary \ref{ApplicationsCorollary001} naturally hold if 
$FDR \leq \alpha$ holds for all szenarios described by the PRDS or reverse martingale model, since the BI model is a submodel 
of both models.
\end{remark}

The next example demonstrates an application of our inequalities.

\begin{example}[about necessary conditions for the BI model]
\hspace{1cm}\newline 
(a) SU tests with critical values 
\begin{equation}\label{CritValBeliebig}
 \alpha_{j:n}=\frac{j\alpha}{n+b-ja} \quad\mbox{and non-negative } a \mbox{ and } b
\end{equation}
are frequently 
discussed in the literature. The requirement $\alpha_{n:n}<1$ for all $n$ implies $0 \leq a \leq 1-\alpha$. A necessary 
condition for $FDR \leq \alpha$ is then by Lemma \ref{ApplicationsLemma002} the additional condition $a\leq b$. If $a>0$ 
is positive then $a<b$ is necessary. \\
(b) Consider some fixed integer $1 \leq k < n$ and the adjusted critical values 
\begin{equation}\label{adjusted_critical_values}
 \alpha_{j:n}=\frac{j\alpha}{n-j(1-\alpha),} \quad j\leq k < n, 
\end{equation}
of the Asymptotic Optimal Rejection Curve (AORC) (\ref{AORC_curve}) of Finner et al. \cite{finner_dickhaus_roters} 
which are first only specified for $j \leq k$. 
There are several possibilities for the choice of $\alpha_{j:n}$, $k<j\leq n$, for the extension of 
(\ref{adjusted_critical_values}) such that (\ref{SecApplications002}) remains true and $\alpha_{n:n}<1$ holds, see 
(\ref{ModificationCritVal}) below and confer also Finner et al. \cite{finner_dickhaus_roters} and Gontscharuk 
\cite{gontscharuk}. It is well-known by Finner et al. \cite{finner_dickhaus_roters} that the SU tests with adjusted 
critical values (\ref{adjusted_critical_values}) do not have finite sample FDR control but asymptotic FDR control. 
Since $a=1-\alpha$ and $b=0$ we directly observe by (a) that finite sample FDR control can not hold. Even the first 
critical value $\alpha_{1:n} = \frac{\alpha}{n-(1-\alpha)} \neq \frac{\alpha}{n}$ is too large to allow FDR control. 
\end{example}

\section{Applications under independence}
\subsection{FDR control}\label{NeuSec4-1}

Our inequalities include a device for the choice of adequate parameters $a,b$ for the critical values (\ref{CritValBeliebig}). 
Below, we restrict ourselves to the FDR adjustment under the BI model. Some technical inequalities presented in Section 
\ref{Theoretical_results} also work under dependence. 

\begin{proposition}\label{proposition_29Jan001}
Consider SU tests with critical values (\ref{CritValBeliebig}) for $0<a<b$ with fixed value $b$ and an adjustment of $a$.
Let $\mathcal{P}_{BI}$ stand for all distributions of the BI model and let $FDR_{(b,a)}$ be linked to (\ref{CritValBeliebig}). 
There exists a unique parameter $a_1 \in (0,b)$ with 
\begin{equation}\label{FDR_adjustment001}
 \sup_{P \in \mathcal{P}_{BI}} FDR_{(b,a_1)} = \alpha.
\end{equation}
The worst case $FDR_{(b,a)}$ is strictly smaller (larger) than $\alpha$ for $a<a_1$ ($a>a_1)$. 
\end{proposition}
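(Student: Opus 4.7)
The plan is to reduce the supremum of $FDR_{(b,a)}$ to a finite maximum over Dirac uniform configurations, then exploit strict monotonicity in $a$ together with an intermediate value argument. First I would verify that the critical values in (\ref{CritValBeliebig}) satisfy (\ref{SecApplications002}): indeed $\alpha_{j:n}/j = \alpha/(n+b-ja)$ is strictly increasing in $j$ whenever $a>0$. Invoking the Benjamini--Yekutieli least favorable configuration result for the BI model mentioned in the paragraph preceding Lemma \ref{ApplicationsLemma002}, the supremum of $FDR$ over $\mathcal{P}_{BI}$ is attained at some Dirac uniform configuration $DU(n_0)$, so
\begin{equation*}
  \sup_{P \in \mathcal{P}_{BI}} FDR_{(b,a)} \;=\; \max_{1 \leq n_0 \leq n} FDR_{DU(n_0),(b,a)}.
\end{equation*}

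Next I would show that for each fixed $n_0$, the map $a \mapsto FDR_{DU(n_0),(b,a)}$ is continuous and strictly increasing on $(0,b)$. Under $DU(n_0)$ in the BI model all $n - n_0$ false $p$-values equal $0$ and are automatically rejected, so $R = (n-n_0) + R'$ where $R'$ is the number of rejections of a SU procedure applied to $n_0$ i.i.d. $U(0,1)$ variables with reduced critical values $\tilde\alpha_{k:n_0} = \alpha_{n-n_0+k:n}$. Each $\tilde\alpha_{k:n_0}$ is a strictly increasing continuous function of $a$; enlarging $a$ therefore can only add rejections, and with positive probability does so (there is nonzero uniform mass inside each enlarged interval $(\tilde\alpha_{k:n_0}^{(a_1)}, \tilde\alpha_{k:n_0}^{(a_2)})$). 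Hence the distribution of $R'$ is strictly stochastically increasing in $a$, and because $k \mapsto k/(n-n_0+k)$ is strictly increasing on $\{1,\ldots,n_0\}$, the expectation
\begin{equation*}
  FDR_{DU(n_0),(b,a)} \;=\; \sum_{k=1}^{n_0} \frac{k}{n-n_0+k}\,P(R'=k)
\end{equation*}
is strictly increasing and clearly continuous in $a$. The upper envelope of finitely many continuous strictly increasing functions inherits both properties, so $a \mapsto \sup_{P}FDR_{(b,a)}$ is continuous and strictly increasing on $(0,b)$.

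It remains to pin down the boundary behavior and apply the intermediate value theorem. At $a \to 0^+$ the critical values approach the BH values at level $n\alpha/(n+b)$, and the upper bound in Proposition \ref{AppicationsLemmaDirect}(a) gives $\sup_P FDR_{(b,0^+)} = n\alpha/(n+b) < \alpha$. At $a \to b^-$ the first critical value $\alpha_{1:n} = \alpha/(n+b-a)$ tends to $\alpha/n$, yet for $b>0$ the test is not the BH test since $\alpha_{2:n} \to 2\alpha/(n-b) \neq 2\alpha/n$; hence Corollary \ref{ApplicationsCorollary001}(b) forces $\sup_P FDR_{(b,a)} > \alpha$ for $a$ sufficiently close to $b$. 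By continuity and strict monotonicity there is exactly one $a_1 \in (0,b)$ with $\sup_P FDR_{(b,a_1)} = \alpha$, and the supremum lies below (above) $\alpha$ for $a<a_1$ ($a>a_1$).

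The main technical hurdle will be the strict (rather than merely weak) monotonicity in $a$, since it ultimately rests on the claim that the reduced critical values $\tilde\alpha_{k:n_0}$ move a genuinely positive amount of uniform mass across each threshold; handling the SU stopping rule carefully (via coupling, or by direct differentiation of the closed-form DU-FDR expression exploited in Lemma \ref{ApplicationsLemma003}) is where the real work lies. The boundary argument at $a=b$ is light once Corollary \ref{ApplicationsCorollary001} is available, and the reduction to $DU(n_0)$ is standard under (\ref{SecApplications002}).
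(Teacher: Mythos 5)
Your proposal is correct and follows essentially the same route as the paper: reduce to Dirac uniform configurations via the least-favorable-configuration result, show that $\sup_{P}FDR_{(b,a)}$ is continuous and strictly increasing in $a$ (the paper reads this off the exact formula $g_{a,b}(n_0)=\frac{\alpha n_0}{n+b}+\frac{aE_{DU}(V|n_0)}{n+b}$ of Lemma \ref{LemmaProp3-2ab}, you via a coupling of the ordered critical values), and then compare the endpoint values $\frac{n\alpha}{n+b}<\alpha$ as $a\searrow 0$ and $>\alpha$ near $a=b$. One cosmetic point: for $n_0=n$ the map $k\mapsto k/(n-n_0+k)$ equals $1$ for all $k\geq 1$, so it is not strictly increasing on $\{1,\ldots,n_0\}$ as you claim; strictness of $a\mapsto FDR_{DU(n)}$ there instead comes from the strict increase of $P(R\geq 1)$ under your coupling, so the conclusion is unaffected.
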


Sharper inequalities for the range of the parameter $a_1$ of (\ref{FDR_adjustment001}) are included in 
Proposition \ref{PropositionSharperIneq} which may be of computational interest in practice. However, 
the exact value $a_1$ should be calculated by numerical calculations, see Lemma \ref{LemmaProp3-2ab} 
(a) of the proof section. For the step up test with critical values (\ref{CritValBeliebig}), $b=1$, 
$\alpha=0.05$ and $n=10$ we have $a_1\approx 0.92$ for instance which is not far away from the upper 
bound $b=1$.


\medskip 
\label{SectionNewSUProcedures}

%
%

In the next step we establish another FDR adjustment as in (\ref{FDR_adjustment001}) of critical values which 
may have some advantage in practice. The new proposal relies on the following observation. 
Typically the largest coefficients of (\ref{stepup001}) are responsible for a worst case FDR value with 
$FDR > \alpha$, cf. Finner et al. \cite{finner_gontscharuk_dickhaus}. For these reasons we propose to bound the largest 
critical values as follows. 

\begin{proposition}\label{FurtherFDRAdjustmentProp}
 Fix $\epsilon>0$ which is typically small. Consider SU tests with deterministic critical values (\ref{stepup001}) 
satisfying (\ref{SecApplications002}) and $\alpha_{1:n} < \frac{\alpha}{n}$. Introduce for fixed $1\leq k \leq n$ 
the new coefficients 
\begin{equation}\label{ModificationCritVal}
 \alpha_{j:n}^{(k)} := \min \left( \alpha_{j:n}, \frac{j}{k}\alpha_{k:n} \right), \quad j=1,\ldots,n.
\end{equation}
If $\sup_{P\in \mathcal{P}_{BI}} FDR((\alpha_{j:n})_{j}) > \alpha + \epsilon$ holds for the FDR of the corresponding 
SU test, then there exists some $1\leq k_0 < n$ with 
\begin{equation}\label{FurtherFDRAdjustmentPropFormula}
 \sup_{P\in \mathcal{P}_{BI}} FDR((\alpha_{j:n}^{(k)})_{j}) \leq \sup_{P\in \mathcal{P}_{BI}} FDR((\alpha_{j:n}^{(k_0)})_{j}) \leq \alpha+\epsilon
\end{equation}
for all $k \leq k_0$ and ''$>$`` for all $k>k_0$.
\end{proposition}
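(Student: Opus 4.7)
The plan is to analyze the map $k \mapsto \sup_{P \in \mathcal{P}_{BI}} FDR((\alpha_{j:n}^{(k)})_j)$, show it is non-decreasing, lies strictly below $\alpha + \epsilon$ at $k=1$ and strictly above $\alpha + \epsilon$ at $k=n$, and then take $k_0$ to be the largest $k$ for which the supremum is still $\le \alpha + \epsilon$.

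First I would unpack (\ref{ModificationCritVal}). Since (\ref{SecApplications002}) gives $\alpha_{j:n}/j \le \alpha_{k:n}/k$ for $j \le k$ and the reverse for $j \ge k$, we obtain $\alpha_{j:n}^{(k)} = \alpha_{j:n}$ for $j \le k$ and $\alpha_{j:n}^{(k)} = (j/k)\alpha_{k:n}$ for $j \ge k$. In particular the modified sequence itself satisfies (\ref{SecApplications002}), and for each fixed $j$ the value $\alpha_{j:n}^{(k)}$ is non-decreasing in $k$: in the $\min$, $\alpha_{j:n}$ is constant while $j\cdot\alpha_{k:n}/k$ is non-decreasing by (\ref{SecApplications002}).

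The boundary values are then easy. At $k=1$ the formula collapses to $\alpha_{j:n}^{(1)} = j\alpha_{1:n}$, which is a Benjamini--Hochberg-type linear family with slope $\alpha_{1:n}$; the exact identity (\ref{FDR001}) (applied with slope $\alpha_{1:n}$ in place of $\alpha/n$) gives $FDR(n_0) = n_0\alpha_{1:n}$ in the BI model, so $\sup_{\mathcal{P}_{BI}} FDR((\alpha_{j:n}^{(1)})_j) \le n\alpha_{1:n} < \alpha < \alpha + \epsilon$ by the hypothesis $\alpha_{1:n} < \alpha/n$. At $k=n$, (\ref{SecApplications002}) forces the $\min$ to be attained at the first argument for every $j$, hence $\alpha_{j:n}^{(n)} = \alpha_{j:n}$ and the worst-case FDR exceeds $\alpha + \epsilon$ by assumption.

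The main step, and the one I expect to be the real obstacle, is to show that $k \mapsto \sup_{\mathcal{P}_{BI}} FDR((\alpha_{j:n}^{(k)})_j)$ is non-decreasing. My approach is to invoke the least-favorable-configuration principle of Benjamini--Yekutieli and Finner--Dickhaus--Roters: under (\ref{SecApplications002}) the BI worst case is attained in the Dirac uniform configurations DU$(n_0)$, $0 \le n_0 \le n$. It therefore suffices to prove monotonicity of $FDR_{DU(n_0)}$ in the critical value vector for each fixed $n_0$. I would couple two families $\beta \le \beta'$ (both satisfying (\ref{SecApplications002})) through the same realization of the $n_0$ true uniform $p$-values. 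With $m = n - n_0 \ge 1$, all false $p$-values are zero, so $R \ge m$ always and the larger family produces a larger $R$; but then $V/R = (R-m)/R = 1 - m/R$ is non-decreasing in $R$, yielding $V/R \le V'/R'$ pointwise and hence $FDR_{DU(n_0)} \le FDR'_{DU(n_0)}$. The boundary cases $m=0$ (where $V/R = 1_{\{R>0\}}$, obviously monotone in the critical values via the coupling) and $n_0 = 0$ (FDR vanishes) are immediate. Together with the pointwise monotonicity of $\alpha_{j:n}^{(k)}$ in $k$ from the first paragraph, this yields the desired monotonicity of the supremum.

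Combining these three ingredients, the set $\{k \in \{1,\dots,n\} : \sup_{\mathcal{P}_{BI}} FDR((\alpha_{j:n}^{(k)})_j) \le \alpha + \epsilon\}$ is a nonempty initial segment not containing $n$; defining $k_0$ as its maximum gives both inequalities of (\ref{FurtherFDRAdjustmentPropFormula}) for $k \le k_0$ by monotonicity, and the strict inequality $>$ for $k > k_0$ by the choice of $k_0$.
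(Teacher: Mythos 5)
Your proposal is correct and follows essentially the same route as the paper: reduce to the least favorable Dirac uniform configurations (legitimate because the modified critical values still satisfy (\ref{SecApplications002})), observe that $FDR_{DU(n_0)}=E\bigl(V/(n-n_0+V)\bigr)$ is monotone in the critical values since the $V$'s are ordered and $x\mapsto x/(n-n_0+x)$ increases, note that $k=1$ reduces to a BH test at level $n\alpha_{1:n}<\alpha$, and define $k_0$ as the largest admissible $k$. Your write-up merely makes explicit the steps the paper leaves implicit (monotonicity of $\alpha_{j:n}^{(k)}$ in $k$ and the boundary case $k=n$).
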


The modification (\ref{ModificationCritVal}) of the critical values has also been considered by Finner et al. \cite{finner_dickhaus_roters} Example 3.2 for the 
special case of critical values coming from the AORC. Moreover, for this type of modification Finner et al. \cite{finner_gontscharuk_dickhaus} propose to increase 
the parameter $b$ in a further step in order to decrease the FDR below $\alpha$. In contrast to earlier work Proposition \ref{FurtherFDRAdjustmentProp} works for 
general critical values with (\ref{SecApplications002}). 
In principal, Proposition \ref{FurtherFDRAdjustmentProp} also works for $\epsilon=0$. For practical purposes a choice of a very small value $\epsilon>0$ can be 
recommended, see Figure \ref{fig:Bild1} in Example \ref{ApplicationsExample002}.

\begin{example}\label{ApplicationsExample002}
(Under the BI model)\\
(a) Let us consider the step down critical values 
\begin{equation}\label{crit_val_gavrilov_benjamini_sarkar}
 \alpha_{j:n} = \frac{j\alpha}{n+1-j(1-\alpha)}, \quad j\leq n, 
\end{equation}
of Gavrilov et al. \cite{gavrilov_benjamini_sarkar}. It is well-known that the corresponding SD test, see Section 
\ref{SectionApplSD} for the notation, yields 
finite sample FDR control, whereas the corresponding SU test has no finite sample FDR control. On the other hand 
the necessary conditions for finite sample FDR control of Lemma \ref{ApplicationsLemma002} (a) are fulfilled. 
In this case our results do not exclude this procedure but we get a meaningful lower bound based on (\ref{Neue_lower_bound}) 
for the worst case of FDR$(n_0)$ and a hint how the critical values (\ref{crit_val_gavrilov_benjamini_sarkar}) can be modified.\\
(b) Figure \ref{fig:Bild1} shows the FDR of the SU test for $\alpha=0.05$ and $n=300$ for the least favorable DU configurations 
for different values of $N=n_0$ with $n_0=1,\ldots, n$, given by the critical values (\ref{crit_val_gavrilov_benjamini_sarkar}). 
The lower bound in (\ref{lower_and_upper_FDR_bound}) is based on
\begin{equation}
 \min_{i\leq n} \frac{n\alpha_{i:n}}{i} = \frac{\alpha}{1+\frac{\alpha}{n}} \to \alpha.
\end{equation}
Thus, this lower bound is close to the FDR for fixed $N \approx n$ for this example. 
\begin{figure}[htbp]\label{Figure2}
  \centering
     \includegraphics[scale=0.55]{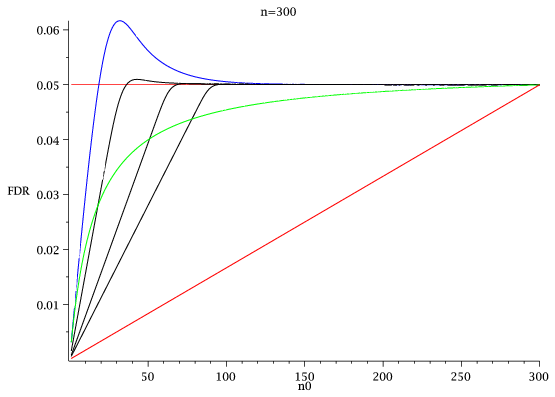}
  \caption{SU FDR of the Gavrilov et al. \cite{gavrilov_benjamini_sarkar} critical values (blue curve), critical values 
(\ref{ModificationCritVal}) for $k=283,250,223$ (black curves), BH SU FDR (lower red line), and lower bound (\ref{Neue_lower_bound}) (green curve)}
  \label{fig:Bild1}
\end{figure}
Moreover, Figure \ref{fig:Bild1} shows the FDR$_{DU}$ plot for different choices $k=300,283,250,$ $223$ given by 
(\ref{ModificationCritVal}) based on the critical values (\ref{crit_val_gavrilov_benjamini_sarkar}) with graphs decreasing in $k$. 
The straight line represents the FDR of the BH test and the green curve is the lower bound (\ref{Neue_lower_bound}).
Numerical results yield the value $k_0 = 283$ for $\epsilon=10^{-3}$ and $k_0 = 223$ for $\epsilon=10^{-4}$ given by 
(\ref{FurtherFDRAdjustmentPropFormula}), see Table \ref{table001}. Here, $k=1$ leads to a BH test and is the only $k$ 
with $FDR\leq\alpha$. \\
(c) In practice the statistician can accept the enlarged FDR value $\alpha+\epsilon$ or he can reduce the critical values 
(\ref{crit_val_gavrilov_benjamini_sarkar}) by a minor reduction of $\alpha$. 
\begin{table}[h]
  \centering
\begin{tabular}{c|ccccc}
 $k$ & 300 & 283 & 250 & 223 & 2 \\ \hline
 & & &  & &\\
$\sup_{P \in \mathcal{P}_{BI}}$FDR & 0.06165 & 0.05098 & 0.05020 & 0.05009 & 0.050006 \\ 
 & & & & & \\ \hline
 & & & & & \\
argmax$_{n_0}$ FDR$(n_0)$& 32 & 43 & 74 & 100 & 300\\
 & & & & &
\end{tabular}
  \caption{Worst case FDR for different choices of $k$ in (\ref{ModificationCritVal}) for the critical values 
(\ref{crit_val_gavrilov_benjamini_sarkar}).}
   \label{table001}
\end{table}
\end{example}

The results given in Figure \ref{fig:Bild1}  are quite promising. A minor modification of the critical values 
(\ref{crit_val_gavrilov_benjamini_sarkar}) exploits the FDR of the BH test. The value of FDR$(n_0)$ is quite 
good for large $n_0$, where the power of the multiple test is really needed.

\subsection{Asymptotic worst FDR case}\label{NeuSec4-2}

Our technique applies to the worst case FDR asymptotics for SU tests given by rejection curves $f$.

\begin{theorem}\label{ApplicationsTheorem001}
 Let $\mathcal{P}_n$ be the set of all possible distributions of the BI model for fixed $n$. Consider a non-decreasing 
continuous rejection curve $f:[0,x_0]\to[0,1]$ for some $0<x_0<1$ with $f(0)=0$, $f(x_0)=1$. Assume also that $f$ is 
left-sided differentiable on $(0,x_0)$ and let $f(x)\geq(1+\epsilon)x$ for all $x$ and some $\epsilon>0$. For the 
sequence of SU tests based on the critical values (\ref{critVal_rejection_curve_f}) the asymptotic worst SU case FDR is 
\begin{equation}\label{beta_ApplicationsTheorem002}
 \beta := \limsup_{n\to \infty} \sup_{P_n \in \mathcal{P}_n} FDR_{P_n} = \sup\left\{\frac{x}{1-x}\frac{1-f(x)}{f(x)} : 0 \leq x \leq x_0\right\}.
\end{equation}
Moreover, $0<\beta<1$ holds.
\end{theorem}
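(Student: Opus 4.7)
My plan is to prove the two inequalities separately and then verify the strict bounds $0 < \beta < 1$.

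For the lower bound, I would realize each candidate value via a Dirac-uniform configuration. Fix $x^* \in (0, x_0)$ and set $\rho^* := (1-f(x^*))/(1-x^*)$, which lies in $(0,1)$ because $x^* < f(x^*) < 1$. In the DU$(n_0)$ model with $n_0 = \lfloor \rho^* n \rfloor$ all false $p$-values equal $0$ while the true $p$-values are i.i.d.\ uniform on $[0,1]$. Glivenko-Cantelli gives uniform almost-sure convergence of the empirical CDF of all $p$-values to $F(t) = 1 - \rho^* + \rho^* t$, and by construction $F(x^*) = f(x^*)$. A continuous mapping argument then yields $R/n \to f(x^*)$ and $V/n \to \rho^* x^*$ almost surely (handling ambiguous crossings by a small perturbation of $\rho^*$ that pushes the largest crossing to $x^*$). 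Bounded convergence gives $FDR \to \rho^* x^* / f(x^*) = \frac{x^*(1-f(x^*))}{(1-x^*)f(x^*)}$, and taking supremum over $x^*$ yields the ``$\geq$'' part of \eqref{beta_ApplicationsTheorem002}.

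For the upper bound, I would condition on $N = n_0$ and on the false $p$-values, combining Proposition \ref{AppicationsLemmaDirect}(c) with a uniform empirical-process argument. Under BI the true $p$-values are i.i.d.\ uniform, so DKW gives $\sup_t |\hat H_{n_0}(t) - t| = O_P(1/\sqrt{n_0})$ with a universal rate. Writing the empirical CDF of all $p$-values as $\hat F_n(t) \approx (1-\rho) G_n(t) + \rho t$ with $\rho = n_0/n$ and $G_n$ the empirical CDF of false $p$-values, the largest crossing $x$ of $\hat F_n$ with $f$ determines $R/n \approx f(x)$ and $V/R \approx \rho x/f(x)$. The structural constraint $G_n(x) \leq 1$ at the crossing rearranges to
\begin{equation*}
\rho \leq \frac{1-f(x)}{1-x},
\end{equation*}
which forces $\rho x/f(x) \leq \frac{x(1-f(x))}{(1-x)f(x)} \leq \beta$. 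Since the error terms are $o(1)$ uniformly in $P_n$ and $n_0$, this gives $\limsup_n \sup_{P_n} FDR \leq \beta$. Finally, $\beta > 0$ because the integrand is strictly positive on $(0, x_0)$ (e.g.\ at $x = x_0/2$), and for $\beta < 1$ the assumption $f(x) \geq (1+\epsilon)x$ yields
\begin{equation*}
\frac{x(1-f(x))}{(1-x)f(x)} \leq \frac{x(1-(1+\epsilon)x)}{(1-x)(1+\epsilon)x} = \frac{1-(1+\epsilon)x}{(1+\epsilon)(1-x)} \leq \frac{1}{1+\epsilon} < 1,
\end{equation*}
using $1-(1+\epsilon)x \leq 1-x$ on $[0,1]$.

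The hard part will be the uniformity in the upper bound: since the joint distribution of false $p$-values is arbitrary, a configuration-by-configuration asymptotic analysis does not directly produce a uniform bound on the limsup. What saves the argument is that the true $p$-values are always i.i.d.\ uniform under BI, so the DKW rate is universal; combined with the configuration-free constraint $G_n(x) \leq 1$ at the crossing, the resulting estimate is uniform in $P_n$. A secondary technicality is ensuring that the ``largest crossing'' varies continuously in $\rho$ even when $f$ is merely non-decreasing (not concave), which is where the strict gap $f(x) \geq (1+\epsilon)x$ is used to rule out a tangential crossing at $0$.
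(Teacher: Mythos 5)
Your formula and the computations of the limiting values are correct, and the overall two-sided strategy is sound, but both halves contain a genuine gap. For the lower bound you rely exclusively on Dirac--uniform configurations: with $n_0/n\to\rho^*$ the empirical distribution function converges to the line $g(t)=1-\rho^*+\rho^* t$, and the SU test tracks the \emph{largest} crossing of $g$ with $f$. Since $f$ is only assumed non-decreasing, this line can meet $f$ several times, possibly tangentially, and the largest crossing need not be your target $x^*$; a ``small perturbation of $\rho^*$'' moves the whole line and hence every crossing simultaneously, so it cannot in general force the largest crossing to sit at $x^*$. (One can partially rescue the argument via the AORC comparison used in step (III)(b) of the paper's proof --- among all crossings of a fixed line through $(1,1)$ the functional $x\mapsto\frac{x}{1-x}\frac{1-f(x)}{f(x)}$ is maximal at the largest one --- but a tangential largest crossing still leaves $R_n/n$ without a deterministic limit.) This is precisely why the paper, after settling the concave case where DU is least favorable, switches for general $f$ to mixture models whose non-uniform part is concentrated just below the target point $x_\delta$, remarking explicitly that ``the DU configuration is no longer least favorable''; your proposal has no substitute for that construction.

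For the upper bound, the pathwise inequality obtained from $G_n(x)\le 1$ at the crossing, namely $\rho\le\frac{1-f(x)}{1-x}$ and hence $V/R\lesssim\frac{x}{1-x}\frac{1-f(x)}{f(x)}\le\beta$, is a nice observation (it is the empirical-process incarnation of the paper's geometric fact that FDR control forces $f\ge f_\beta$), but the claimed uniformity of the $o(1)$ error fails exactly in the regime $R_n/n\to 0$ (equivalently $n_0/n\to 1$): there $V/R$ is a ratio of bounded integers, DKW gives no control of $V/n$ relative to $R/n$, and the approximation $V/R\approx\rho x/f(x)$ is meaningless. This is the ``difficult case when the expected portion of true null hypotheses becomes maximal'' that the paper singles out; it is handled separately via Lemma \ref{ApplicationsLemma001}(a)(i), which yields $FDR\le\frac{n_0}{n}\sup_{j\le\lfloor nz\rfloor}\frac{n\alpha_{j:n}}{j}+P(R_n>\lfloor nz\rfloor)$ and then lets $z\searrow 0$ to recover $\lim_{x\searrow 0}x/f(x)$. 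Note also that the paper's route to the upper bound is structurally different: it first proves the theorem for concave $f$ (where DU is least favorable by Benjamini--Yekutieli), then dominates a general $f$ from below by a concave curve built from the AORC $f_\gamma$ and a tangent line, and invokes the monotonicity Lemma \ref{BeliebigeAblehnkurve}; you would need either to adopt that reduction or to supply the missing uniform control in the small-$R$ and small-$n_0$ regimes.
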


\begin{remark}
(a) Note that the AORC curve $f_\alpha$ (\ref{AORC_curve}) yields
\begin{equation}
 \frac{x}{1-x} \frac{1-f_{\alpha}(x)}{f_\alpha(x)} = \alpha, \quad x\in (0,1),
\end{equation}
which again supports the optimality of $f_\alpha$. \\
(b) Suppose that $\beta \leq \alpha < 1$ holds for some rejection curve $f$ treated in Theorem \ref{ApplicationsTheorem001}. The proof of Theorem 
\ref{ApplicationsTheorem001} implies $f\geq f_\alpha$ on $[0,x_0]$ with the upper bound $f^{-1}(\frac{i}{n}) \leq f^{-1}_\alpha(\frac{i}{n})$ for 
the critical values of the SU tests.  \\
(c) The question about the limiting FDR for $\frac{R_n}{n}\to 0$ was addressed in Remark 5.1 of 
Finner et al. \cite{finner_dickhaus_roters}. Our approach contributes to this open problem. 
\end{remark}

\label{SectionApplSD}
For concave rejection curves we briefly point out that the asymptotic bound $\beta$ of Theorem \ref{ApplicationsTheorem001} 
is the same for step down (SD) tests. Consider again the critical values (\ref{critVal_rejection_curve_f}). 
The step down critical index is given by the number of SD rejections
\begin{equation}\label{RSDdef}
 R_{SD} := \max\{ i\, : \, p_{j:n} \leq \alpha_{j:n} \mbox{ for all } 1\leq j \leq i \}.
\end{equation}
The modification of (\ref{index001}) and (\ref{rejected001}) for SD tests requires that all null hypotheses with $p$-values 
\begin{equation}
 p_i \leq  \alpha_{R_{SD}:n}
\end{equation}
are rejected. When the condition in (\ref{RSDdef}) is empty no hypothesis is rejected and $R_{SD}=0$ holds. 
Similarly to (\ref{rejected002}) put $V_{SD} = \#\{\mbox{true } p_i\, : \, p_i \leq \alpha_{R_{SD}:n} \}$
to be the number of false positive rejections. 

\begin{theorem}\label{ApplicationsTheoremSD}
 Under the assumptions of Theorem \ref{ApplicationsTheorem001}, let us additionally assume that $f$ is a concave rejection curve. 
Then we obtain 
the same asymptotic upper bound
\begin{equation}\label{ApplicationsTheoremSD001}
 \limsup_{n\to \infty} \sup_{P_n \in \mathcal{P}_n} FDR_{P_n,SD} = \beta
\end{equation}
for the sequence of SD tests generated by (\ref{critVal_rejection_curve_f}) as for the corresponding sequence of SU tests.
\end{theorem}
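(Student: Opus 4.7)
My plan is to show that, under the concavity hypothesis on $f$, the SU and SD thresholds share the same asymptotic limit at every Dirac uniform (DU) configuration, so that the theorem reduces almost directly to Theorem \ref{ApplicationsTheorem001}. The geometric point is that when $f$ is concave and $f\geq(1+\epsilon)\,\mathrm{id}$, the limit of the empirical $p$-value distribution can only cross $f$ once on $[0,x_0]$, so the ``first crossing'' (which governs SD) and the ``last crossing'' (which governs SU) coincide.

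Concretely, I would first reduce, exactly as in the proof of Theorem \ref{ApplicationsTheorem001}, to DU$(n_0)$ configurations with $n_0/n\to\zeta\in(0,1)$. Under such a sequence $\hat F_n$ converges uniformly to the line $F_\zeta(t)=(1-\zeta)+\zeta t$. The difference $F_\zeta-f$ is convex (by concavity of $f$), strictly positive at $t=0$ because $F_\zeta(0)=1-\zeta>0=f(0)$, and strictly negative at $t=x_0$ because $F_\zeta(x_0)=1-\zeta(1-x_0)<1=f(x_0)$. A convex function with those boundary values has exactly one zero on $[0,x_0]$, call it $x(\zeta)$. Since $R_{SU}/n$ converges to the largest $t$ at which $F_\zeta\geq f$ and $R_{SD}/n$ converges to the smallest $t$ at which $\hat F_n$ drops below $f$, both limits coincide with $x(\zeta)$, so the limiting SD and SU FDRs agree at every DU$(\zeta)$, both equal to $\zeta\,x(\zeta)/f(x(\zeta))$. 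Taking the supremum over $\zeta$ and invoking Theorem \ref{ApplicationsTheorem001} yields $\limsup_n\sup_{P_n\in\mathcal{P}_n}FDR_{SD,P_n}\leq\beta$, and the matching lower bound is inherited from any DU sequence along which $FDR_{SU}\to\beta$, because $FDR_{SD}=FDR_{SU}$ asymptotically on such a sequence.

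The main obstacle is the rigorous probabilistic convergence $\alpha_{R_{SD}:n}\to x(\zeta)$. The SD stopping rule is more fragile than the SU one, because any early excursion of $\hat F_n$ below $f$ halts SD prematurely, but the single-crossing structure saves the argument: once $F_\zeta-f>0$ is known on $[0,x(\zeta))$, a standard Glivenko--Cantelli estimate combined with continuity of the crossing map pushes $R_{SD}/n$ to $x(\zeta)$ in probability, and passing from convergence in probability of $V_{SD}/R_{SD}$ to convergence of its expectation is immediate because $V_{SD}/R_{SD}\leq 1$. A secondary technical point is that the reduction from general BI configurations to DU for SD is legitimate; this can be handled by the same monotonicity-in-$G$ argument underlying Theorem \ref{ApplicationsTheorem001}: for a fixed crossing point $x$, the asymptotic FDR $(f(x)-G(x))/(x-G(x))\cdot x/f(x)$ is increasing in $G(x)\in[0,1]$ when $f(x)>x$, and is therefore maximized at the DU extreme $G(x)=1$.
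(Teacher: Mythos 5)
Your proposal is essentially correct and its lower-bound half is the paper's own argument: along DU$(n,n_0)$ sequences with $n_0/n\to\zeta\in(0,1)$ the single-crossing property of the concave $f$ with the limiting line forces $R_{SD,n}/n$ and $R_{SU,n}/n$ to the same limit $K=f(x(\zeta))$, and since under DU one has $V_{SD}=R_{SD}-(n-n_0)$, dominated convergence gives $FDR_{SD}\to \zeta x(\zeta)/f(x(\zeta))=\frac{x}{1-x}\frac{1-f(x)}{f(x)}$, whose supremum over $\zeta$ recovers $\beta$. Your identification of the single-crossing point as the mechanism making "first crossing" (SD) and "last crossing" (SU) coincide is exactly the geometric content the paper uses implicitly when it says $R_{SD,n}/n\to K$ "along the lines of" the SU proof.

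Where you diverge is the upper bound, and this is where your argument has a soft spot. The paper does not analyze the SD asymptotics over general BI models at all: it invokes the finite-sample conditional domination $E(V_{SD}/R_{SD}\mid n_0,\bar f)\le E(V_{SU}/R_{SU}\mid n_0,\bar f)$, valid whenever $i\mapsto\alpha_{i:n}/i$ is non-decreasing (Gontscharuk's Theorem 3.10, provable by the technique of Remark \ref{RemarkzumCentralLemmaProof}), so that $\limsup_n\sup_{P_n}FDR_{P_n,SD}\le\beta$ follows instantly from Theorem \ref{ApplicationsTheorem001}. You instead need that DU is least favorable for the \emph{SD} test uniformly over $\mathcal{P}_n$ at each finite $n$, and your justification --- that the limiting expression $\frac{f(x)-G(x)}{x-G(x)}\cdot\frac{x}{f(x)}$ is increasing in $G(x)$ --- is an asymptotic monotonicity for models with a convergent empirical distribution of false $p$-values; it does not by itself control $\sup_{P_n\in\mathcal{P}_n}$, which ranges over models with random $N$ and no limiting configuration, and an interchange of $\limsup_n$ and $\sup_{P_n}$ is being used tacitly. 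This is patchable, either by proving the finite-sample DU least-favorability for SD tests under (\ref{SecApplications002}) (true, but it is an extra lemma), or, more economically, by adopting the paper's SD-vs-SU domination for the upper bound and keeping your crossing-point analysis only for the lower bound, where it is genuinely needed.
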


\begin{remark} 
 Consider a sequence of SD tests with critical values given by (\ref{critVal_rejection_curve_f}) based on a concave rejection curve 
(\ref{ConcaveRejectionCurve}) and which has finite sample FDR control by $\alpha$ for all $n \in \mathbb{N}$ in the BI model. Then 
the same holds asymptotically for the corresponding sequence of SU tests. This technique does not apply for the sequence of SU tests 
based on the Gavrilov et al. \cite{gavrilov_benjamini_sarkar} critical values (\ref{crit_val_gavrilov_benjamini_sarkar}), since the 
critical values are not generated by one rejection curve, but by the sequence of rejection curves $(1+\frac{1}{n})f_\alpha$ based on 
the AORC. 
\end{remark}

\section{Applications to adaptive control under dependence}\label{SecAdaptiveControl}

In contrast to the preceding sections now data driven critical values are considered in order to exhaust the FDR level 
of given SU tests. Much effort was done in order to establish adaptive SU tests which are based on the linear SU test 
of Benjamini and Hochberg \cite{benjamini_hochberg}. These tests are typically based on conservatively biased estimators 
$\hat n_0$ of $N$ in order to exploit the FDR level better. The approach is motivated by the substitution of $\alpha$ by 
$\alpha'=\frac{n}{\hat n_0}\alpha$ which leads to the heuristic $FDR\approx\alpha E\left(\frac{N}{\hat n_0}\right)\approx 
\alpha$ for consistent $\hat n_0$ and to data dependent BH type critical values 
\begin{equation}\label{data-dependent-crit-val001}
 \hat \alpha_{i:n} = \frac{i}{\hat n_0} \alpha.
\end{equation} 
We refer to the well-known and frequently applied so called Storey type estimators given by the empirical distribution 
function $\hat F_n$ of the $p$-values
\begin{equation}\label{Storey_estimator_adapt_contr}
 \hat n_0(\lambda) = n \frac{1-\hat F_n(\lambda)+\kappa_n}{1-\lambda}, \quad \kappa_n>0,
\end{equation}
where $\lambda$ is often chosen to be close to $0.5$, see Storey et al.\cite{storey_taylor_siegmund} and Storey\cite{storey} 
for the choice of $\kappa_n=\frac{1}{n}$. 
There are several estimators and conditions for FDR control in the literature, for example see Benjamini et al. 
\cite{benjamini_krieger_yekutieli}, Sarkar \cite{sarkar} and Zeisel et al. \cite{zeisel}.

The finite sample FDR control of the adaptive SU test of Storey based on the critical values (\ref{data-dependent-crit-val001}) 
and estimator (\ref{Storey_estimator_adapt_contr}) with $\kappa_n=\frac{1}{n}$ seems to be restricted to the BI model. Even for 
the reverse martingale model, which allows that some $p$-values coincide, further assumptions are required, see Example 
\ref{Example6-1} below for instance. 

The aim of this section is twofold for the reverse martingale model. 
\begin{itemize}
 \item In Section \ref{SubsectionAsymptotic} sufficient conditions for estimators of $N$ are introduced which ensure asymptotic FDR control. 
 \item In Section \ref{SubsectionBlockModel} modified Storey SU tests are introduced which have finite sample FDR control for various block wise 
		      dependence models.
\end{itemize}
Moreover, we propose an adaptive multiple test for arbitrary dependent data and we also give a sufficient condition 
for the estimator and dependence structure, respectively, which again ensures asymptotic FDR control, compare with 
(\ref{CritValBY}) and (\ref{CritValBR}). 

The estimators and multiple tests are based on various assumptions. Let $0 < \lambda < 1$ and divide $[0,1]$ 
into two areas, the rejection area $[0,\lambda]$ and the estimation area $[\lambda,1]$. Let us specify different 
assumptions. 
\begin{itemize}
\item[(A1)] The unknown value $N$ is estimated by an estimator
\end{itemize}
	      \begin{equation}\label{53estimation}
		\hat n_0 = g((\hat F_n(t))_{\lambda\leq t\leq 1}) > 0 \quad \mbox{almost surely}
	      \end{equation}
\begin{itemize}
\item[]	   via the empirical cumulative distribution function $\hat F_n$ on the \textbf{estimation area} $[\lambda,1]$
and a measurable function $g$. 
\item[(A2)]  The unknown value $N$ is estimated by
\end{itemize}
	      \begin{equation}
		\hat n_0 = g((\hat F_n(t))_{0\leq t\leq 1}) > 0 \quad \mbox{almost surely.}
	      \end{equation}
\begin{itemize}
\item[(A3)] The multiple test is applied to the \textbf{rejection area} $[0,\lambda]$ with data dependent critical 
	      values
\end{itemize}
	      \begin{equation}\label{CritVal002}
	       \hat \alpha_{i:n} = \left(\frac{i}{\hat n_0} \alpha\right)\wedge \lambda, \ 1\leq i\leq n.
	      \end{equation}
\begin{itemize}
\item[(A4)] The multiple test is applied to the \textbf{rejection area} $[0,1]$ with the following data dependent 
	    critical values
\end{itemize}
	      \begin{equation}\label{AdaptiveControlTheorem1002}
	       \hat\alpha_{i:n} = \frac{\alpha}{n} \int_0^{in/\hat n_0}xd\nu(x), \quad 1\leq i\leq n,
	      \end{equation}
\begin{itemize}
\item[] where $\nu$ is an arbitrary probability measure on $(0,\infty)$.
\end{itemize}
Taking the minimum in (\ref{CritVal002}) goes back to Storey et al. \cite{storey_taylor_siegmund} and ensures 
that one does not reject $p$-values greater than $\lambda$, therefore the name rejection area. Statisticians 
often do not like to reject a hypothesis when the $p$-value is too high. The estimated critical values 
(\ref{AdaptiveControlTheorem1002}) are based on the deterministic family of critical values (\ref{CritValBR}) 
of Blanchard and Roquain \cite{blanchard_roquain_2} which also include the critical values (\ref{CritValBY}) 
of Benjamini and Yekutieli \cite{benjamini_yekutieli}.

\subsection{Asymptotic results}\label{SubsectionAsymptotic}
The central Lemma \ref{LemmaCentral} now establishes sufficient conditions for asymptotic FDR control of adaptive SU 
tests under different dependence structures.

\begin{theorem}\label{AdaptiveControlTheorem1}
Let $\mathcal{P}_n$ be the set of all possible distributions of the reverse martingale models for fixed $n$ and let 
$(P_n)_n$ be a sequence of distributions with $P_n \in \mathcal{P}_n$. Moreover, let $\hat n_{0,n}$ be a sequence of 
estimators for $N_n$ which fulfill (A1). 
If 
\begin{equation}\label{AdaptiveControlTheorem1001}
P_n\left( \frac{\hat n_{0,n}}{N_n} \leq 1-\delta \right) \longrightarrow 0
\end{equation}
holds for all $\delta>0$, where $\frac{x}{0}:=\infty$ for $x>0$, then 
\begin{equation}\label{StatementAdaptiveControlTheorem1}
 \limsup_{n \to \infty} FDR_{P_n} \leq \alpha
\end{equation}
holds for the sequence of adaptive SU tests given by (A3).
\end{theorem}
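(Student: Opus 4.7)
The plan is to combine a conditional version of Proposition~\ref{AppicationsLemmaDirect} (a) with the trivial bound $V/R\leq 1$ on the event where $\hat n_{0,n}$ underestimates $N_n$. First I exploit assumption (A1): because $\hat n_{0,n}$ is a measurable function of $\hat F_n$ restricted to the estimation area $[\lambda,1]$, it is measurable with respect to the reverse filtration $\mathcal{F}_\lambda=\sigma((1\{p_j\leq s\},\epsilon_j):1\leq j\leq n,\,s\geq\lambda)$ of Definition~\ref{DependenceStructures} (c), as is $N_n=\sum_i\epsilon_i$. Hence, for each fixed $\delta\in(0,1)$, the event
\begin{equation*}
B_{n,\delta}=\{N_n=0\}\cup\{\hat n_{0,n}\geq(1-\delta)N_n\}
\end{equation*}
lies in $\mathcal{F}_\lambda$.

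Next I would derive a conditional FDR inequality. Since $\mathcal{F}_\lambda\subseteq\mathcal{F}_t$ for $t\leq\lambda$, the reverse martingale property of Definition~\ref{DependenceStructures} (c) is preserved on $(0,\lambda]$ after conditioning on $\mathcal{F}_\lambda$, and given $\mathcal{F}_\lambda$ the critical values $\hat\alpha_{i:n}=(i\alpha/\hat n_{0,n})\wedge\lambda$ become deterministic and non-decreasing. A conditional form of Proposition~\ref{AppicationsLemmaDirect} (a), which I expect to read off from the central Lemma~\ref{LemmaCentral}, then yields
\begin{equation*}
E[V/R\mid\mathcal{F}_\lambda]\leq\frac{N_n}{n}\max_{i\leq n}\frac{n\hat\alpha_{i:n}}{i}=\min\!\bigl(N_n\alpha/\hat n_{0,n},\,N_n\lambda\bigr),
\end{equation*}
the maximum being attained at $i=1$ since $i\mapsto n\hat\alpha_{i:n}/i=\min(n\alpha/\hat n_{0,n},\,n\lambda/i)$ is non-increasing. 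On $B_{n,\delta}$ this conditional bound is at most $\alpha/(1-\delta)$ (using $0/0=0$ when $N_n=0$).

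Assembling, the decomposition
\begin{equation*}
FDR_{P_n}=E\bigl[(V/R)\,1_{B_{n,\delta}}\bigr]+E\bigl[(V/R)\,1_{B_{n,\delta}^c}\bigr]\leq\frac{\alpha}{1-\delta}+P_n(B_{n,\delta}^c),
\end{equation*}
in which the first summand is controlled by the conditional inequality just derived (and $B_{n,\delta}\in\mathcal{F}_\lambda$) and the second by the trivial estimate $V/R\leq 1$, combined with the hypothesis (\ref{AdaptiveControlTheorem1001}) that $P_n(B_{n,\delta}^c)\to 0$ for each $\delta$, yields $\limsup_n FDR_{P_n}\leq\alpha/(1-\delta)$; sending $\delta\downarrow 0$ completes the argument.

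The hard part will be the middle step: producing a conditional version of Proposition~\ref{AppicationsLemmaDirect} (a) from Lemma~\ref{LemmaCentral} and verifying carefully that the reverse martingale structure of Definition~\ref{DependenceStructures} (c) survives conditioning on $\mathcal{F}_\lambda$. The truncation $\wedge\lambda$ built into (A3) is what keeps the adaptive test well-defined for very small $\hat n_{0,n}$, but the crucial observation is that on the bad event $B_{n,\delta}^c$ no FDR bound sharper than the trivial $V/R\leq 1$ is needed, so the potentially large conditional term $N_n\lambda$ never has to be integrated against $P_n$.
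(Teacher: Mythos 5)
Your overall decomposition (bound $V/R$ by $1$ on the bad set, use a sharper conditional bound on the good set, let $\delta\downarrow 0$) is sound and is essentially the split the paper performs. However, the middle step as you state it is false: the inequality
$E[V/R\mid\mathcal{F}_\lambda]\leq\frac{N_n}{n}\max_{i\leq n}\frac{n\hat\alpha_{i:n}}{i}$
does not hold pointwise. What Lemma \ref{LemmaCentral} (a) actually gives after conditioning on $\mathcal{F}_\lambda$ is the identity $E[V/(n\hat\alpha_{R:n})\mid\mathcal{F}_\lambda]=V(\lambda)/(n\lambda)$, so the correct conditional bound is $E[V/R\mid\mathcal{F}_\lambda]\leq\frac{V(\lambda)}{n\lambda}\max_{i}\frac{n\hat\alpha_{i:n}}{i}$, and $V(\lambda)/\lambda$ can exceed $N_n$ (it is only its conditional expectation given $\epsilon$ that equals $\lambda N_n$). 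Concretely, take $n=N_n=1$, $\hat n_0\equiv 1$, $\alpha<\lambda$: on the $\mathcal{F}_\lambda$-event $\{p_1\leq\lambda\}$ one has $E[V/R\mid\mathcal{F}_\lambda]=\alpha/\lambda>\alpha=\min(N_n\alpha/\hat n_0,N_n\lambda)$. So the quantity you integrate over $B_{n,\delta}$ is not bounded by $\alpha/(1-\delta)$ conditionally; Proposition \ref{AppicationsLemmaDirect} (a) is an integrated statement (conditional on $\epsilon$), not an $\mathcal{F}_\lambda$-conditional one.

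The gap is repairable, and the repair is essentially the paper's proof. Keep the $V(\lambda)$-dependent conditional bound, note that on $B_{n,\delta}$ (with $N_n>0$) it is at most $\frac{\alpha}{\lambda(1-\delta)}\frac{V(\lambda)}{N_n}$, and then integrate using the tower property and $E[V(\lambda)\mid\epsilon]=\lambda N_n$ to get $E[(V/R)1_{B_{n,\delta}}]\leq\frac{\alpha}{\lambda(1-\delta)}E[V(\lambda)/N_n]=\frac{\alpha}{1-\delta}$; this needs one more integration step than you allowed for. The paper sidesteps the issue by conditioning only on $\epsilon$: from Lemma \ref{LemmaCentral} (a) it gets $E[V/(n\hat\alpha_{R:n})\mid\epsilon]=N_n/n$, uses $\hat\alpha_{R:n}\leq R\alpha/\hat n_{0,n}$ to deduce $E[\hat n_{0,n}V/(N_nR)]\leq\alpha$, and then splits on $\{\hat n_{0,n}/N_n>1-\delta\}$ inside that integrated inequality — arriving at the same final bound $FDR_{P_n}\leq\alpha/(1-\delta)+P_n(\hat n_{0,n}/N_n\leq 1-\delta)$ that you target.
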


Finner and Gontscharuk \cite{finner_gontscharuk} and Gontscharuk \cite{gontscharuk} already used condition 
(\ref{AdaptiveControlTheorem1001}) to show asymptotic FWER control of a specific sequence of adaptive 
Bonferroni tests and adaptive SD tests, respectively. Under mild regularity assumptions, Liang and Nettleton 
\cite{liang} showed that the FDR of the adaptive SU test of Storey with altered estimator $\hat n_0 (\lambda) 
= n\frac{1-\hat F_n(\lambda)}{1-\lambda}$ and critical values $\hat \alpha_{i:n} = \frac{i}{\hat n_0(\lambda)} 
\alpha$ is asymptotically controlled at level $\alpha$ for every arbitrary and data dependent selection of the 
tuning parameter $\lambda$ out of a candidate set $\{0=\lambda_0<\ldots<\lambda_m<1\}$. This result may also be 
proved by application of Theorem \ref{AdaptiveControlTheorem1}, but note that Theorem \ref{AdaptiveControlTheorem1} 
works for a much broader class of estimators and also under the reverse martingale model. The conservative consistency 
(\ref{AdaptiveControlTheorem1001}) is a very weak condition. Under mild regularity assumptions the crucial assumption 
(\ref{AdaptiveControlTheorem1001}) is also necessary for (\ref{StatementAdaptiveControlTheorem1}) for Storey type 
estimators (\ref{Storey_estimator_adapt_contr}). 

\begin{proposition}\label{Proposition002}
 Let $(P_n)_n$ be a sequence of reverse martingale models with either 
\begin{itemize}
 \item[(i)] $\frac{N_n}{n}\to 1$ or
 \item[(ii)] $\xi_i \leq \lambda$ for all $i$ and $0 < \eta \leq \frac{N_n}{n}$ for some $\eta$ and all $n$.
\end{itemize}
Consider the $FDR_{P_n}$ of the sequence of SU tests based on (A3) and (\ref{Storey_estimator_adapt_contr}) so that 
(\ref{StatementAdaptiveControlTheorem1}) holds. Let 
$\kappa_n \to 0$ and suppose that $P_n(\{ \hat \alpha_{R:n} = \lambda \}) \to 0$ holds as $n\to \infty$. Then the 
ratio $\frac{\hat n_0}{N_n}\to 1$ converges to one in $P_n$-probability as $n\to \infty$.
\end{proposition}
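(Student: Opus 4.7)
The plan is to combine the exact reverse-martingale FDR identity from Lemma~\ref{LemmaCentral} with the automatic mean-consistency of the Storey estimator, and then to force two-sided concentration by a Jensen / convexity-defect argument on a suitable good event.

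First, I apply Lemma~\ref{LemmaCentral} to the adaptive SU test with critical values (\ref{CritVal002}). In the reverse martingale model this yields
\begin{equation*}
FDR_{P_n} \;=\; \alpha\, E\!\left[\frac{N_n}{\hat n_0}\mathbf{1}\{\hat\alpha_{R:n}<\lambda\}\right] \;+\; E\!\left[\frac{V}{R}\mathbf{1}\{\hat\alpha_{R:n}=\lambda\}\right],
\end{equation*}
because on $\{\hat\alpha_{R:n}<\lambda\}$ the ratio $\hat\alpha_{R:n}/R$ equals $\alpha/\hat n_0$. Since $V/R\le 1$ and $P_n(\hat\alpha_{R:n}=\lambda)\to 0$ by hypothesis, the second summand is $o(1)$; together with $\limsup FDR_{P_n}\le\alpha$ this gives $\limsup_n E[(N_n/\hat n_0)\mathbf{1}\{\hat\alpha_{R:n}<\lambda\}]\le 1$.

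Second, the reverse-martingale identity $E[T_n(\lambda)\mid\epsilon]=\lambda N_n$, with $T_n(\lambda)=\sum_{\epsilon_i=1}\mathbf{1}\{p_i\le\lambda\}$, inserted into the definition (\ref{Storey_estimator_adapt_contr}), yields by elementary algebra $E[\hat n_0/N_n]=1+o(1)$: in case (ii) the false-null contribution to $\hat F_n(\lambda)$ equals $n-N_n$ identically and the identity is exact up to the $\kappa_n$ term, while in case (i) one additionally uses $n-N_n=o(N_n)$ together with $E[N_n/n]\to 1$. Setting $Y_n:=\hat n_0/N_n$, the structural bound $\hat n_0\le n(1+\kappa_n)/(1-\lambda)$ combined with the case hypothesis on $N_n$ confines $Y_n$ to a uniformly bounded range, and a Markov bound applied to $E[(1/Y_n)\mathbf{1}\{\hat\alpha_{R:n}<\lambda\}]\le 1+o(1)$ shows that $Y_n$ is also bounded below in $P_n$-probability. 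Thus $P_n(G_n)\to 1$ for the good event $G_n:=\{Y_n\in[1/M,K],\ \hat\alpha_{R:n}<\lambda\}$ with $M,K$ chosen appropriately.

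On $G_n$ one has $E[Y_n\mathbf{1}_{G_n}]\to 1$ (because the complement carries vanishing mass and $Y_n$ is uniformly bounded on the relevant set) and $E[(1/Y_n)\mathbf{1}_{G_n}]\le 1+o(1)$ from the previous two steps; Cauchy--Schwarz gives the matching lower bound $E[(1/Y_n)\mathbf{1}_{G_n}]\ge P_n(G_n)^2/E[Y_n\mathbf{1}_{G_n}]\to 1$, so both conditional expectations $E_{G_n}[Y_n]$ and $E_{G_n}[1/Y_n]$ converge to $1$. Since $y\mapsto 1/y$ is uniformly strictly convex on $[1/M,K]$, the vanishing Jensen defect $E_{G_n}[1/Y_n]-1/E_{G_n}[Y_n]$ dominates the conditional variance of $Y_n$ up to a positive constant, so $\mathrm{Var}(Y_n\mid G_n)\to 0$. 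This gives $Y_n\to 1$ first in conditional probability on $G_n$, and hence in $P_n$-probability since $P_n(G_n)\to 1$. The main obstacle I expect is the boundary event $\{\hat\alpha_{R:n}=\lambda\}$: there $N_n/\hat n_0$ may be as large as $(1-\lambda)/\kappa_n\to\infty$, so $N_n/\hat n_0$ is not uniformly integrable under the bare hypotheses. The clean route is to carry the restriction to $G_n$ through every expectation from the outset, and only invoke the FDR bound and the hypothesis $P_n(\hat\alpha_{R:n}=\lambda)\to 0$ at the last moment to conclude $P_n(G_n)\to 1$.
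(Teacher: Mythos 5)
Your overall strategy---pin down $E[\hat n_0/N_n]$ and an expectation of a convex function of $\hat n_0/N_n$, then force concentration through a vanishing Jensen defect---is sound in spirit and is in fact a close cousin of the paper's argument, which applies the same ``equality in Jensen forces degeneracy'' idea to the strictly convex map $x\mapsto x/(1-x)$ evaluated at distributional cluster points of $V_n(\lambda)/N_n$. But your opening display is not an identity, and the error is not cosmetic: it discards exactly the fluctuation the proposition is about. On $A_n=\{\hat\alpha_{R:n}<\lambda\}$ you correctly have $\tfrac{V}{R}=\tfrac{n\alpha}{\hat n_0}\cdot\tfrac{V}{n\hat\alpha_{R:n}}$, but the optional stopping step behind Lemma \ref{LemmaCentral}/(the proof of) Lemma \ref{Theorem003} gives $E\bigl[\tfrac{V}{n\hat\alpha_{R:n}}\,\big|\,\mathcal{F}_\lambda\bigr]=\tfrac{V(\lambda)}{n\lambda}$, \emph{not} $\tfrac{N_n}{n}$. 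Since $\hat n_0$ is $\mathcal{F}_\lambda$-measurable, the correct restricted formula is
\begin{equation*}
E\left[\frac{V}{R}\,1_{A_n}\right]=\frac{\alpha}{\lambda}\,E\left[\frac{V(\lambda)}{\hat n_0}\,1_{A_n}\right],
\end{equation*}
with the random variable $V(\lambda)$, not its conditional mean $\lambda N_n$, in the numerator. Because $\hat n_0$ is a decreasing function of $V(\lambda)$ through $\hat F_n(\lambda)$, the variables $V(\lambda)$ and $1/\hat n_0$ are positively correlated, and the gap between $\tfrac{1}{\lambda}E[V(\lambda)/\hat n_0]$ and $E[N_n/\hat n_0]$ is precisely a covariance term of the same order as $\mathrm{Var}(V(\lambda)/N_n)$ --- the quantity whose vanishing is equivalent to the conclusion. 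Writing $FDR_{P_n}=\alpha E[N_n/\hat n_0\cdot 1_{A_n}]+\dots$ therefore assumes away the problem. To repair your route you would have to justify $E[(N_n/\hat n_0)1_{A_n}]\le 1+o(1)$ by a separate positive-association argument, which is delicate because $1_{A_n}$ is not $\mathcal{F}_\lambda$-measurable and, in case (i), the false-null count $S_n(\lambda)$ is itself random. The paper avoids this entirely by keeping $V(\lambda)$ and $\hat n_0$ coupled: it rewrites $\tfrac{1-\lambda}{\lambda}\cdot\tfrac{V_n(\lambda)/N_n}{1-V_n(\lambda)/N_n+\cdots}$ as a convex function of the single bounded variable $W_n=V_n(\lambda)/N_n$ with $E[W_n]=\lambda$, and lets strict convexity of $x/(1-x)$ force every cluster point to equal $\lambda$.

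Two smaller points. First, Markov applied to $E[1/Y_n]\le 1+o(1)$ only gives $P_n(Y_n<1/M)\le (1+o(1))/M$, so your good event satisfies $\liminf_n P_n(G_n)\ge 1-c/M$ rather than $P_n(G_n)\to1$ for fixed $M$; this is repairable by letting $M\to\infty$ after $n\to\infty$, but as written the claim is too strong. Second, your step ``$E[\hat n_0/N_n]=1+o(1)$'' is fine (it only uses $E[V_n(\lambda)\mid\epsilon]=\lambda N_n$, $\kappa_n\to0$ and the lower bound on $N_n/n$), and the Cauchy--Schwarz detour is unnecessary: once you have $E[Y_n]\to1$ and a genuine bound $E[(1/Y_n)1_{G_n}]\le 1+o(1)$ on an event of asymptotically full measure where $Y_n$ is bounded above, Jensen already gives the matching lower bound and the convexity-defect estimate closes the argument.
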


\begin{remark}[About asymptotic FDR of Storey type SU tests]\label{RemarkAsymptoticVariability}
\hspace{1cm}\\
Consider the reverse martingale model. As long as enough variability of the variables 
$(\epsilon_i 1\{p_i\leq\lambda\})_{i\leq n}$ is present condition (\ref{AdaptiveControlTheorem1001}) 
can be verified. Let $\hat n_0$ be the estimator (\ref{Storey_estimator_adapt_contr}) for some positive 
sequence $\kappa_n$. Let
\begin{equation}
 \hat F_{0,n}(\lambda) = \frac{1}{N_n} \sum_{i=1}^n \epsilon_i 1\{ p_i \leq \lambda \}
\end{equation}
be the cumulative distribution function of true $p$-values. Then a sufficient condition to ensure 
(\ref{AdaptiveControlTheorem1001}) is (\ref{AboutAsymptoticFDR}), where the conditional variances
\begin{equation}\label{AboutAsymptoticFDR}
 Var_{P_n}\left(\hat F_{0,n}(\lambda) \Big| (\epsilon_i)_{i \leq n} \right) 
= E_{P_n}\left( \left(\hat F_{0,n}(\lambda)-\lambda\right)^2 \Big| (\epsilon_i)_{i \leq n} \right)
\to 0
\end{equation}
tends to zero in probability as $n\to\infty$. The corresponding SU tests then have asymptotic FDR control. 
\end{remark}

At this point, Theorem \ref{AdaptiveControlTheorem1} can be extended to treat arbitrary $p$-values. Like Benjamini and 
Yekutieli \cite{benjamini_yekutieli} and Blanchard and Roquain \cite{blanchard_roquain_2}, who considered non data 
dependent SU tests for arbitrary dependence structures. Therefore we have to consider more conservative test procedures. 
The adaptive SU test (A4) is based on the critical values (\ref{CritValBR}) of Blanchard and Roquain 
\cite{blanchard_roquain_2} and yields asymptotic FDR control if (\ref{AdaptiveControlTheorem1001}) is satisfied.

\begin{theorem}\label{AdaptiveControlTheorem1Part2}
Let $\widetilde{\mathcal{P}}_n$ be the set of all possible distributions of the $p$-value model (\ref{model002}) for fixed $n$, 
where each $U_1,\ldots, U_n$ is distributed according to the uniform distribution on $(0,1)$. No further distributional 
assumption and no dependence structure is assumed. Again, let $(P_n)_n$ be a sequence of distributions with 
$P_n \in \widetilde{\mathcal{P}}_n$ and $\hat n_{0,n}$ be a sequence of estimators for $N_n$ which fulfill (A2). If 
(\ref{AdaptiveControlTheorem1001}) is  fulfilled, then (\ref{StatementAdaptiveControlTheorem1}) holds for the sequence 
of adaptive SU tests given by (A4).
\end{theorem}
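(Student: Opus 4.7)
The plan is to adapt the event-splitting approach of Theorem \ref{AdaptiveControlTheorem1} to the fully dependent setting, using Blanchard and Roquain's FDR bound for the deterministic critical values (\ref{CritValBR})---which gives $FDR \leq \alpha E(N)/n$ under arbitrary dependence of $p$-values with uniform null marginals---in place of the reverse-martingale analysis. First I would fix $\delta>0$ and introduce the good event $A_n^\delta := \{\hat n_{0,n} \geq (1-\delta)N_n\}$ (with the convention $\hat n_{0,n}/0 = \infty$). Assumption (\ref{AdaptiveControlTheorem1001}) gives $P_n(A_n^{\delta,c}) \to 0$, and since $V/R \leq 1$,
\begin{equation*}
FDR_{P_n} \leq E_{P_n}\!\left[\frac{V}{R \vee 1}\mathbf{1}_{A_n^\delta}\right] + P_n(A_n^{\delta,c}),
\end{equation*}
with the second term asymptotically negligible.

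Next, on $A_n^\delta$ the inequality $in/\hat n_{0,n} \leq in/((1-\delta)N_n)$ yields the pointwise majorization $\hat\alpha_{i:n} \leq \alpha^{(\delta)}_{i:n}$ with
\begin{equation*}
\alpha^{(\delta)}_{i:n} := \frac{\alpha}{n}\int_0^{in/((1-\delta)N_n)} x\, d\nu(x).
\end{equation*}
Conditioning on $N_n = n_0$ and pushing $\nu$ forward under $x \mapsto x(1-\delta)n_0/n$ to a probability measure $\tilde\nu$ on $(0,\infty)$, these conditional critical values take the Blanchard--Roquain form
\begin{equation*}
\alpha^{(\delta)}_{i:n} = \frac{\tilde\alpha}{n}\int_0^i y\, d\tilde\nu(y), \qquad \tilde\alpha = \frac{\alpha n}{(1-\delta)n_0},
\end{equation*}
so that the corresponding deterministic SU test has conditional FDR at most $\tilde\alpha\, n_0/n = \alpha/(1-\delta)$ by Blanchard and Roquain.

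The main obstacle is to transfer this bound from the deterministic oracle test to the original adaptive test restricted to $A_n^\delta$. Since FDR is in general \emph{not} monotone in the critical values---simultaneously enlarging $V$ and $R$ can shift their ratio either way---a direct coupling does not suffice. Instead I would run the Blanchard--Roquain integration argument directly on the adaptive procedure, decomposing
\begin{equation*}
E_{P_n}\!\left[\frac{V}{R \vee 1}\mathbf{1}_{A_n^\delta}\right] = \sum_{i=1}^n \sum_{k=1}^n E_{P_n}\!\left[\epsilon_i\, \frac{\mathbf{1}\{p_i \leq \hat\alpha_{k:n},\, R = k\}}{k}\mathbf{1}_{A_n^\delta}\right]
\end{equation*}
and carrying the random $\hat n_{0,n}$ through BR's $\nu$-integration, invoking $n/\hat n_{0,n} \leq n/((1-\delta)N_n)$ only at the final step. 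This is precisely where the central Lemma \ref{LemmaCentral} enters: it provides an exact FDR identity flexible enough to accommodate random critical values and to apply the BR integration under arbitrary dependence. The outcome is $E_{P_n}[V/(R\vee 1) \mathbf{1}_{A_n^\delta}] \leq \alpha/(1-\delta)$; combining with the splitting and letting $\delta \downarrow 0$ yields $\limsup_n FDR_{P_n} \leq \alpha$.
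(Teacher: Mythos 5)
Your proposal is correct and ultimately lands on exactly the paper's argument: the proof is Lemma \ref{LemmaCentral}(b) (the Blanchard--Roquain $\nu$-integration applied directly to the data-dependent critical values (A4), valid under arbitrary dependence), giving $E(\hat n_{0,n}V/(N_nR))\leq\alpha$ conditionally on $\epsilon$, followed by the same $\delta$-splitting on $\{\hat n_{0,n}/N_n>1-\delta\}$ as in Theorem \ref{AdaptiveControlTheorem1}. The detour through the deterministic oracle test and the worry about non-monotonicity of the FDR in the critical values is unnecessary, since Lemma \ref{LemmaCentral}(b) already accommodates the random $\hat n_{0,n}$.
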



\subsection{Finite sample results}
Let us now come back to finite sample FDR control. We will give a condition for FDR control for the reverse 
martingale model and the next very useful Lemma offers an exact formula for the FDR of our adaptive tests.  

\begin{lemma}\label{Theorem003} 
Let $V(\lambda):= \#\{p_i \leq \lambda, \, p_i \mbox{ true}\}$ denote the number of true null hypotheses 
with $p$-values $p_i\leq \lambda$. Under the reverse martingale model, the adaptive SU test with critical 
values (A3) and estimator (A1) fulfills 
\begin{equation}\label{OurCondition001}
 E\left( \frac{V}{R} \right) 
= \frac{\alpha}{\lambda} E\left( V(\lambda) \min\left\{\frac{1}{\hat n_0}, \frac{\lambda}{n\hat F_n(\lambda) \alpha}\right\} \right)
\leq \frac{\alpha}{\lambda} E\left( \frac{V(\lambda)}{\hat n_0} \right).
\end{equation}
\end{lemma}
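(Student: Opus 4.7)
The plan is to decompose $V/R$ multiplicatively as $(V/T)\cdot(T/R)$ where $T=\hat\alpha_{R:n}$, and then to apply reverse optional stopping on the $V/T$ factor while showing that the ratio $T/R$ is automatically $\mathcal F_\lambda$-measurable and equals the min appearing in the statement.

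\emph{Step 1 (Structural identity for $T/R$).} On $\{R\ge1\}$ the critical value $T=\hat\alpha_{R:n}=\min(R\alpha/\hat n_0,\lambda)$. A short case analysis gives
\[
\frac{T}{R}=\min\!\left(\frac{\alpha}{\hat n_0},\frac{\lambda}{n\hat F_n(\lambda)}\right).
\]
Indeed, if $R\alpha/\hat n_0\ge\lambda$ then $T=\lambda$ and the SU test must reject every $p_i\le\lambda$, so $R=n\hat F_n(\lambda)$ and $T/R=\lambda/(n\hat F_n(\lambda))$, which is the min because the other side exceeds it. If $R\alpha/\hat n_0<\lambda$ then $T/R=\alpha/\hat n_0$; since $R\le n\hat F_n(\lambda)$, one verifies $\alpha/\hat n_0<\lambda/(n\hat F_n(\lambda))$, so again $T/R$ equals the min. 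Crucially, the min is $\mathcal F_\lambda$-measurable because $\hat n_0$ depends only on $(\hat F_n(t))_{t\ge\lambda}$ and $\hat F_n(\lambda)\in\mathcal F_\lambda$.

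\emph{Step 2 (Extending $T$ to a reverse stopping time).} I would replace $T$ by $T':=\hat\alpha_{1\vee R:n}$, so $T'\in[\hat\alpha_{1:n},\lambda]\subset(0,\lambda]$. One checks that $\{T'\ge t\}\in\mathcal F_t$ for every $t\in(0,1]$: for $t\le\hat\alpha_{1:n}$ the event is all of $\Omega$, while for $t>\hat\alpha_{1:n}$ it reduces to $\{R\ge\lceil t\hat n_0/\alpha\rceil\}$, whose defining conditions $p_{i:n}\le\hat\alpha_{i:n}$ only involve indices $i$ with $\hat\alpha_{i:n}\ge t$ and are thus $\mathcal F_t$-measurable. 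Let $V':=\#\{i:\epsilon_i=1,\,p_i\le T'\}$. On $\{R=0\}$ we have $p_j>\hat\alpha_{1:n}=T'$ for every $j$, hence $V'=0$ there. Combining this with Step 1 yields the pointwise identity
\[
\frac{V}{R}\,\mathbf 1\{R\ge1\}=\frac{V'}{T'}\cdot\min\!\left(\frac{\alpha}{\hat n_0},\frac{\lambda}{n\hat F_n(\lambda)}\right)
\]
on all of $\Omega$.

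\emph{Step 3 (Optional stopping and conclusion).} Because the reverse martingale property conditions on $\epsilon$ and $\epsilon_i\in\mathcal F_\lambda$, for each true $i$ the process $\mathbf 1\{p_i\le t\}/t$ is a reverse martingale w.r.t.\ $(\mathcal F_t)$. Reverse optional stopping applied to $T'\le\lambda$ (integrability is automatic because $E[\mathbf 1\{p_i\le t\}/t]=1$ for the reverse martingale) gives
\[
E\!\left[\frac{\mathbf 1\{p_i\le T'\}}{T'}\,\Big|\,\mathcal F_\lambda\right]=\frac{\mathbf 1\{p_i\le\lambda\}}{\lambda},
\]
so $E[V'/T'\mid\mathcal F_\lambda]=V(\lambda)/\lambda$. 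Pulling out the $\mathcal F_\lambda$-measurable min factor and taking expectations yields
\[
E\!\left(\frac{V}{R}\right)=E\!\left[\frac{V(\lambda)}{\lambda}\min\!\left(\frac{\alpha}{\hat n_0},\frac{\lambda}{n\hat F_n(\lambda)}\right)\right]=\frac{\alpha}{\lambda}\,E\!\left[V(\lambda)\min\!\left(\frac1{\hat n_0},\frac{\lambda}{n\hat F_n(\lambda)\alpha}\right)\right].
\]
The inequality at the end follows from $\min(1/\hat n_0,\lambda/(n\hat F_n(\lambda)\alpha))\le 1/\hat n_0$.

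The main obstacle I expect is Step 2: giving a clean justification that $T'$ is a reverse stopping time despite the fact that $\{R=0\}$ itself is not $\mathcal F_t$-measurable, together with the observation that $V'$ vanishes exactly where $T$ had to be extended. Once this is in place, Step 1 is a routine case check and Step 3 is standard reverse martingale optional stopping.
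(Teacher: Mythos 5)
Your proof is correct and rests on the same mechanism as the paper's: condition on $\mathcal F_\lambda$ (which fixes $\hat n_0$, $\hat F_n(\lambda)$ and the critical values), and apply reverse optional stopping to $\mathbf 1\{p_i\le t\}/t$ at the SU stopping time $\tau=\hat\alpha_{R\vee 1:n}$. The packaging differs: the paper rescales the $p$-values below $\lambda$ to $q_i=p_i/\lambda$, observes that conditionally one faces an ordinary linear SU test on the $q$'s at the data-dependent level $\alpha'=n(\lambda)\alpha/(\lambda\hat n_0)$ with $R_q=R$, $V_q=V$, and then cites the exact identity of its central Lemma 7.1(a) to read off $E(V_q/R_q\mid\cdot)=\tfrac{V(\lambda)}{n(\lambda)}\min(\alpha',1)$; you instead inline the optional-stopping argument and isolate the ratio $T/R$ as an explicit $\mathcal F_\lambda$-measurable factor. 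The two routes produce the identical conditional identity $E(V/R\mid\mathcal F_\lambda)=\tfrac{V(\lambda)}{\lambda}\min(\alpha/\hat n_0,\lambda/(n\hat F_n(\lambda)))$, so the choice is purely one of presentation; yours is more self-contained, the paper's reuses an already-proved lemma.

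One step needs patching. In Step 1, Case 2, you assert that $R\alpha/\hat n_0<\lambda$ together with $R\le n\hat F_n(\lambda)$ yields $\alpha/\hat n_0<\lambda/(n\hat F_n(\lambda))$. That implication does not follow from those two facts alone (e.g.\ $R$ small and $n\hat F_n(\lambda)$ large is numerically consistent with $n\hat F_n(\lambda)\alpha/\hat n_0\ge\lambda$). The correct argument is by contradiction: if $m\alpha/\hat n_0\ge\lambda$ with $m=n\hat F_n(\lambda)$, then $\hat\alpha_{m:n}=\lambda\ge p_{m:n}$, so the SU definition forces $R\ge m$, hence $R=m$ and $T=\lambda$, contradicting the Case 2 hypothesis. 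Equivalently, run the case split on whether $n\hat F_n(\lambda)\alpha/\hat n_0\ge\lambda$ rather than on $R\alpha/\hat n_0\ge\lambda$. With that repair, Steps 2 and 3 go through as written (your $T'$ is exactly the paper's reverse stopping time $\tau$, and conditioning on $\mathcal F_\lambda$ first reduces the stopping-time verification to the discrete, deterministic-critical-value case).
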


This result generalizes Lemma 3.1 of Heesen and Janssen \cite{heesen_janssen}, where the BI model is treated only. 
For the control of the FDR by $\alpha$ one merely has to show 
\begin{equation}\label{verification}
 E\left( \frac{V(\lambda)}{\hat n_0} \right) \leq \lambda.
\end{equation} 
For example the benchmark result of Liang and Nettleton \cite[Theorem 7]{liang} works for estimators $\hat n_0'$ with 
$\hat n_0' \geq \hat n_0(\lambda)$ almost surely with $\hat n_0(\lambda)$  defined in (\ref{Storey_estimator_adapt_contr}) 
with $\kappa_n=\frac{1}{n}$ and for some $\lambda \in [0,1)$. In comparison to that, Lemma \ref{Theorem003} works for the 
class of estimators (A1) and also for the reverse martingale model. Some interesting estimators which do not satisfy 
$\hat n_0' \geq \hat n_0 (\lambda)$ are given Heesen and Janssen \cite{heesen_janssen}.

The following negative result explains first that the use of adaptive SU tests is limited under dependence and further 
results are needed for finite sample FDR control.

\begin{proposition}\label{Proposition001}
 Consider an adaptive SU test based on (A1), (A3) and $\alpha<\lambda$. Assume that the estimator 
 $p_i \mapsto \hat n_0 $
is non-decreasing for each coordinate $i$. If we have $FDR\leq \alpha$ for all reverse martingale models then 
$\hat n_0 \geq n$ holds and the adaptive critical values $\hat \alpha_{i:n} \leq \frac{i}{n}\alpha$ are dominated 
by the BH critical values. 
\end{proposition}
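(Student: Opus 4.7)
The plan is to exhibit one specific reverse martingale configuration in which the FDR can be computed in closed form, and to show that FDR control in that configuration forces $\hat n_0 \ge n$. The extremal model will be $p_1=\ldots=p_n=U$ for a single $U\sim U(0,1)$ with every null true, combined with the assumed monotonicity of $\hat n_0$ in each coordinate $p_i$.

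\textbf{Step 1 (universal lower bound on $\hat n_0$).} Since $\hat n_0=g((\hat F_n(t))_{\lambda\le t\le 1})$ is non-decreasing in each $p_i$ by hypothesis, its minimum over all possible $p$-vectors is attained at $p_1=\ldots=p_n=0$, which makes $\hat F_n\equiv 1$ on $[\lambda,1]$. Writing $c^*:=g(\mathbf 1)$, one concludes $\hat n_0\ge c^*$ almost surely in every model of (\ref{model002}).

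\textbf{Step 2 (a reverse-martingale ``all-equal'' model).} Take $N=n$, all $\epsilon_i=1$, and $p_1=\ldots=p_n=U$ with $U\sim U(0,1)$. The reverse filtration of Definition \ref{DependenceStructures}(c) reduces to $\mathcal F_t=\sigma(1\{U\le s\}:s\ge t)$. For $s\le t$, on $\{U>t\}$ both sides of the martingale identity vanish; on the $\mathcal F_t$-atom $\{U\le t\}$ the conditional expectation equals $P(U\le s\mid U\le t)/s=(s/t)/s=1/t=1\{U\le t\}/t$, so the required property (\ref{MartingaleDef}) holds for every true $p$-value.

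\textbf{Step 3 (FDR computation and conclusion).} In this model $V=R$ and, since $p_{i:n}=U$ for every $i$ and the critical values are monotone in $i$, $\{R>0\}=\{U\le\hat\alpha_{n:n}\}$. On $\{U>\lambda\}$ we have $\hat\alpha_{n:n}\le\lambda<U$, so $R=0$. On $\{U\le\lambda\}$ we have $\hat F_n\equiv 1$ on $[\lambda,1]$, hence $\hat n_0=c^*$ and $\hat\alpha_{n:n}=(n\alpha/c^*)\wedge\lambda$. Putting the two pieces together, $FDR=(n\alpha/c^*)\wedge\lambda$. Since $\alpha<\lambda$ by hypothesis, the requirement $FDR\le\alpha$ forces $n\alpha/c^*\le\alpha$, i.e.\ $c^*\ge n$. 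Combining with Step 1 yields $\hat n_0\ge c^*\ge n$ everywhere, and therefore $\hat\alpha_{i:n}\le i\alpha/\hat n_0\le(i/n)\alpha$ as claimed.

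\textbf{Main obstacle.} The delicate ingredient is identifying the least-favorable reverse martingale: the model must simultaneously exhaust the lower bound of Step 1 (via ``all $p_i$ small'' so that $\hat F_n$ reaches $1$ on $[\lambda,1]$ and $\hat n_0$ attains its floor $c^*$) and concentrate the whole probability of rejection onto a single value of the critical index (achieved by collapsing all $p$-values to one uniform variable). Verifying the reverse-martingale property only amounts to a short computation on the atoms of $\mathcal F_t$, and once the model is in hand the FDR formula and the algebraic conclusion $c^*\ge n$ are immediate.
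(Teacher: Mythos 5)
Your proof is correct and follows essentially the same route as the paper: the same extremal configuration $N=n$, $p_1=\cdots=p_n=U$, the same use of coordinatewise monotonicity to reduce to the value of the estimator at $\hat F_n\equiv 1$ on $[\lambda,1]$, and the same algebraic conclusion from $FDR=(n\alpha/\hat n_0(\vec 1))\wedge\lambda\le\alpha<\lambda$. The only (harmless) difference is that you compute the FDR of this degenerate model directly and verify its reverse martingale property by hand, whereas the paper simply plugs into the exact formula (\ref{OurCondition001}) of Lemma \ref{Theorem003}.
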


The adaptive step up test of Storey does not yield FDR control in the reverse martingale and PRDS model. For instance 
Blanchard and Roquain \cite[Theorem 17]{blanchard_roquain} proved that in case of $n=n_0\geq 2$ and $p_1=\ldots=p_n=U$, 
for a uniform distributed $U$ on $(0,1)$, 
$$E\left( \frac{V}{R} \right) = \min\left( \alpha n(1-\lambda), \lambda \right)
$$
holds for the Storey estimator (\ref{Storey_estimator_adapt_contr}) with $\kappa_n=\frac{1}{n}$. This result corresponds 
to Lemma \ref{Theorem003}. Note that condition (\ref{AdaptiveControlTheorem1001}) is violated since 
$\frac{1}{\hat n_0} =1-\lambda$ holds on $\{U\leq\lambda\}$ for the Storey estimator (\ref{Storey_estimator_adapt_contr}). 


\subsection{Case of a block model}\label{SubsectionBlockModel}
Finally the following modified adaptive SU test is considered when mild additional dependence assumptions are present. 
Suppose that the $p$-values can be divided by
\begin{equation}\label{Grouping001}
 \{ p_1,\ldots, p_n\} = \bigcup_{i=1}^k G_i
\end{equation}
in $k$ disjoint blocks or groups $G_i$. 
Suppose the reverse martingale condition for $(p_1,\ldots, p_n)$. Assume in addition that for each group the subset 
$\widetilde G_i \subset G_i$ corresponds to uniformly distributed $p$-values given by true null hypotheses. Below let 
the groups $\widetilde G_i$, $1\leq i\leq k$, be conditionally independent given the signs $\epsilon$ whereas within 
group $\widetilde G_i$ a reverse martingale dependence structure is allowed. 

\begin{remark}
 In practice the model may have the following meaning for genome data. $\widetilde G_i$ may stand for independent 
portions of true $p$-values which may come from different chromosomes. The $p$-values of $\widetilde G_i$ may 
be reverse martingale dependent, for instance some of them may be equal. 
\end{remark}

Consider the maximal size $m$ of the groups
\begin{equation}\label{Grouping002}
 m:= \max_{i\leq k} |G_i| \quad \mbox{and } n=mk-r
\end{equation}
with a remainder $r\geq 0$. Furthermore, let us assume that the number of true $p$-values $N$ is almost surely lower 
bounded by $N_{min}$. For the tuning parameters $0< \lambda < 1$ and $\kappa \geq 1$ the modified Storey estimator 
\begin{equation}\label{Grouping004}
 \hat n_0(\kappa) := n \frac{1- \hat F_n(\lambda) +\frac{\kappa}{n}}{1-\lambda}
\end{equation}
with $\kappa_n = \frac{\kappa}{n}$ is introduced. Again (\ref{Grouping004}) can be improved by the factor $(1-\lambda^k)$, 
i.e. also $(1-\lambda^k)\hat n_0(\kappa)$ will work. We show that the step up test with estimated critical values
\begin{equation}\label{Grouping003}
 \hat \alpha_{i:n} = \left( \frac{i}{\hat n_0(\kappa)}\alpha\right) \wedge \lambda
\end{equation}
yields FDR control , $FDR\leq \alpha$, under the present block wise dependence model for all $\kappa \geq m+r+(n-N_{min})$. 
If the groups are balanced, i.e. $|G_1|=\ldots=|G_k|$ holds, 
then $r$ vanishes and the best fit is expected. Of course $\hat n_0(\kappa) > n$ may happen for large $r$ in the 
unbalanced case and there would then be no advantage in comparison with the BH test when the $p$-values are all 
independent.

\begin{theorem}\label{Theorem004}
 Consider the reverse martingale model for the $p$-values. Assume that the $p$-values can be divided in $k\geq 2$ 
disjoint groups, see (\ref{Grouping001}) and (\ref{Grouping002}) above. Moreover, assume that $N\geq N_{min}$ holds 
almost surely for a lower bound $N_{min}>0$. Let conditionally on the signs $\epsilon$ the groups $\widetilde G_1, 
\ldots, \widetilde G_k$ of the true $p$-values be independent. If $\kappa \geq m+r+(n-N_{min})$ holds, then the 
modified adaptive SU test with critical values (\ref{Grouping003}) and estimator (\ref{Grouping004}) has finite 
sample FDR control, i.e. $FDR \leq \alpha$, and (\ref{verification}) holds. 
\end{theorem}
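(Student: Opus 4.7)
The plan is to apply Lemma \ref{Theorem003}, which reduces the assertion $FDR\leq\alpha$ together with \eqref{verification} to proving
\begin{equation*}
E\!\left[\frac{V(\lambda)}{\hat n_0(\kappa)}\right] \leq \lambda.
\end{equation*}
Writing $n\hat F_n(\lambda) = V(\lambda) + W(\lambda)$ and using $W(\lambda)\leq n-N$, I get the deterministic lower bound $\hat n_0(\kappa)(1-\lambda)\geq \kappa + N - V(\lambda)$. Conditioning on $\epsilon$ and setting $C:=\kappa+N$, the hypothesis $\kappa\geq m+r+(n-N_{min})$, the bound $N\geq N_{min}$ and the identity $mk=n+r$ yield $C\geq m(k+1)$. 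It therefore suffices to show
\begin{equation*}
(1-\lambda)\,E\!\left[\frac{V(\lambda)}{C-V(\lambda)}\,\Big|\,\epsilon\right]\leq\lambda \quad\text{whenever } C\geq m(k+1).
\end{equation*}

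The block structure is what powers the rest. The conditional independence of the true subgroups $\widetilde G_1,\ldots,\widetilde G_k$ given $\epsilon$ makes the counts $V_i(\lambda)=\#\{j\in\widetilde G_i : p_j\leq\lambda\}$ conditionally independent, and the reverse martingale property at $t=1$ gives $E[V_i(\lambda)\mid\epsilon]=N_i\lambda$ with $V_i(\lambda)\in[0,N_i]$, $N_i\leq m$. Since $v\mapsto v/(C-v)$ is convex on $[0,C)$, a one-variable-at-a-time Jensen extreme-point swap (the chord upper bound for a convex function on $[0,N_i]$ with fixed mean $N_i\lambda$, iterated via conditional independence) replaces each $V_i$ by its Bernoulli extreme, so that the target inequality holds provided it does for the worst case $V^\star:=\sum_i N_i B_i$ with the $B_i$ conditionally i.i.d.\ Bernoulli$(\lambda)$. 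This worst case is realizable within the reverse martingale framework by taking all true $p$-values of each $\widetilde G_i$ equal to a common uniform variable, so nothing is lost.

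For $V^\star$ I exploit the factorized moment generating function. Using $1/(C-v)=\int_0^\infty e^{-(C-v)t}\,dt$ and $E[e^{tV^\star}\mid\epsilon]=\prod_{i=1}^k(\lambda e^{tN_i}+1-\lambda)$, the substitution $u=e^t$ yields
\begin{equation*}
E\!\left[\frac{1}{C-V^\star}\,\Big|\,\epsilon\right] = \int_1^\infty u^{-C-1}\prod_{i=1}^k\bigl(\lambda u^{N_i}+1-\lambda\bigr)\,du.
\end{equation*}
Since $N_i\leq m$ and $u\geq 1$, the integrand is dominated by $u^{-C-1}(\lambda u^m+1-\lambda)^k$; the changes of variables $u=1/v$ and then $y=v^m$ evaluate the dominating integral at $C=m(k+1)$ in closed form to $(1-\lambda^{k+1})/(m(k+1)(1-\lambda))$, and an elementary monotonicity argument (differentiating term by term in the expansion of $(\lambda u^m+1-\lambda)^k$) extends the bound to all $C\geq m(k+1)$. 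This gives $(1-\lambda)\,C\,E[1/(C-V^\star)\mid\epsilon]\leq 1-\lambda^{k+1}$, and the identity $E[V^\star/(C-V^\star)]=C\cdot E[1/(C-V^\star)]-1$ produces the target $(1-\lambda)E[V^\star/(C-V^\star)\mid\epsilon]\leq\lambda-\lambda^{k+1}\leq\lambda$. Integrating over $\epsilon$ concludes the proof.

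The main obstacle I expect is the extremal reduction. The one-at-a-time convexity swap is classical for convex functions of sums of independent bounded random variables with fixed means, but the independence here is only conditional on $\epsilon$ and only among the true subgroups; one must verify that the reverse martingale assumption within each $\widetilde G_i$ is compatible with arbitrary marginals of $V_i(\lambda)$ on $[0,N_i]$ with mean $N_i\lambda$ (and in particular with Bernoulli ones), and that the possibly arbitrary dependence between true and false $p$-values does not interfere—which it does not, precisely because the deterministic step has already eliminated $W(\lambda)$ from the denominator. A secondary, purely computational obstacle is the algebraic alignment of the threshold $C\geq m(k+1)$ with the hypothesis $\kappa\geq m+r+(n-N_{min})$ via $mk=n+r$, which is tight and explains why the budget on $\kappa$ cannot be relaxed within this strategy.
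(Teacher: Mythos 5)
Your proposal is correct and follows essentially the same route as the paper: the reduction via Lemma \ref{Theorem003} to (\ref{verification}), the deterministic elimination of the false $p$-values from the denominator, and the one-group-at-a-time convex extreme-point swap (the paper's ``balayage'' Lemma \ref{LemmaforTheorem004}) reducing to conditionally independent Bernoulli extremes are all exactly the paper's steps, and your algebra $\kappa+N\geq m(k+1)$ matches the paper's use of $\frac{N+\kappa}{m}\geq k+1$. The only divergence is the final evaluation: the paper bounds $N_i\leq m$ inside the monotone ratio and invokes the Binomial identity $E\left(\frac{X}{k+1-X}\right)=\frac{\lambda}{1-\lambda}(1-\lambda^k)$, whereas you evaluate a Laplace-transform integral and argue monotonicity in $C$; both yield the same bound $\lambda(1-\lambda^k)\leq\lambda$.
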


Gontscharuk \cite{gontscharuk} considered a similar block model which leads to dependent $p$-values and an adaptive 
Bonferroni type procedure with asymptotic FWER control. Theorem \ref{Theorem004} works for finite $n$. 

If the group structure is known and balanced with $|G_1|=\ldots=|G_k|=m$, then Guo and Sarkar \cite{guo_sarkar} propose 
an adaptive multiple test with FDR control under PRDS within each group. The ingredients are based on the Storey type 
estimator (\ref{Storey_estimator_adapt_contr}), where $\lambda$ depends on the number of blocks and $\kappa=\frac{m}{n}$. 
However, every rejected $p$-values has to be less than or equal to $\frac{k}{\hat n_0(\lambda)}\alpha$ in comparison to 
$\frac{n}{\hat n_0(\lambda)}\alpha$ for the adaptive SU test considered in Theorem \ref{Theorem004}.

As mentioned above the estimator (\ref{Grouping004}) may produce very conservative SU tests for independent $p$-values. 
However, Theorem \ref{Theorem004} is designed for block dependent $p$-values and we will see by the inspection of the 
FWER that this procedure can not always be improved. The necessary calculations for Example \ref{Example6-1} are included 
in the proof of Theorem \ref{Theorem004}. 

\begin{example}\label{Example6-1}
 Consider $k$ blocks of size $m$ with $N=n=mk$ true $p$-values. Suppose that for each block all $p$-values coincide with the 
same uniformly distributed random variable whereas the blocks are independent. The model has both the PRDS and the reverse 
martingale property. When $\alpha \leq \frac{\lambda}{k(1-\lambda)}$ holds then the choice $\kappa=m$ for the procedures 
(\ref{Grouping004}) and (\ref{Grouping003}) yields 
\begin{equation}
 FDR = FWER = \alpha(1-\lambda^k).
\end{equation}
Here the modified estimator $(1-\lambda^k)\hat n_0(\kappa)$ attains the FWER bound $\alpha$ and we obtain a sharp result for 
the block model. 
\end{example}

We conducted a small Monte-Carlo simulation with 100.000 repetitions to explore the situation beyond the case 
of Example \ref{Example6-1} for the adaptive SU test with critical values (\ref{Grouping003}) and estimator 
(\ref{Grouping004}) with $\alpha=0.05$ at size $n=100$. All false $p$-values are set to $0$. Consider a setting of $k=5$ groups 
with $m=20$ $p$-values and 16 equal true $p$-values in each group. The choices of $\kappa=1,16,20$ lead to 
$FDR \approx 0.0886, 0.0476, 0.0438$. Furthermore, the group frequencies 25,25,20,15,15 with 20,20,16,12,12 
equal true $p$-values within theses groups and the choices of $\kappa=1,12,20,25$ lead to $FDR \approx 0.0921, 
0.0567, 0.0446, 0.0385$. Since the number of true $p$-values within each group is unknown, the simulation and 
Example \ref{Example6-1} indicate that here $\kappa \approx m$ is an appropriate tuning parameter for the 
adaptive SU test. Moreover, a simulation with equal group frequencies under the global intersection hypothesis 
shows that $\kappa$ slightly smaller than $m$ already yields $FDR > \alpha$. For $k=10$ groups of equal size 
with $m=100$ and only true $p$-values in each group, the choice of $\kappa=97$ yields $FDR\approx0.051$.

\section{Technical results and proofs}
\label{Theoretical_results}

\begin{lemma}\label{LemmaCentral}
(a) Let $0<\hat\alpha_{1:n}\leq \ldots \leq\hat\alpha_{n:n}\leq \lambda < 1$ be data dependent critical values 
\begin{equation} 
 \hat\alpha_{i:n} = g_i((\hat F_n(t))_{t\geq\lambda}), \quad i=1,\ldots, n,
\end{equation}
given by measurable functions $g_i$ and introduce $\hat\alpha_{0:n}=\hat \alpha_{1:n}$. Moreover define 
$\gamma(i) := n \hat\alpha_{i:n}$. Then 
\begin{equation}\label{LemmaCentral001}
 E\left(\frac{V}{\gamma(R)}\right) = \frac{E(N)}{n}
\end{equation}
holds for the corresponding adaptive SU tests under the reverse martingale model (including the BI model).\\
(b) Let $\hat\rho(i)=\hat\rho(i,(\hat F_n(t))_{0\leq t\leq 1}) > 0$, $i=0,\ldots, n$, be non-decreasing in $i$ and let 
$\gamma(i):= \alpha \hat\rho(i)$. Moreover, assume that $\nu$ is a probability measure on $(0,\infty)$ and define the 
data dependent critical values via 
\begin{equation}\label{LemmaCentralCritVal001}
 \hat\alpha_{i:n} = \frac{\alpha}{n} \int_{0}^{\hat\rho(i)} x d\nu(x).
\end{equation}
Then ''$\leq$`` holds in (\ref{LemmaCentral001}) for the corresponding adaptive SU test with critical values 
(\ref{LemmaCentralCritVal001}) for arbitrary dependent variables $(\epsilon_i, U_i, \xi_i)_{i\leq n}$.
\end{lemma}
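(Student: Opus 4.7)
The two parts of the lemma require rather different arguments, so I would handle them in parallel. In both cases the target expression is decomposed as
\[
\frac{V}{\gamma(R)}\;=\;\sum_{i=1}^n\epsilon_i\,\frac{1\{p_i\leq\hat\alpha_{R:n}\}}{\gamma(R)}
\]
and each summand is bounded individually.

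For part (a), set $\hat\alpha^*:=\hat\alpha_{R:n}$ so that $\gamma(R)=n\hat\alpha^*$. The goal is to establish
\[
E\!\left[\frac{1\{p_i\leq\hat\alpha^*\}}{\hat\alpha^*}\,\Big|\,\epsilon\right]=1\qquad\mbox{for each $i$ with }\epsilon_i=1,
\]
which upon multiplication by $\epsilon_i/n$, summation over $i$, and taking expectations gives $E[V/(n\hat\alpha^*)]=E(N)/n$. The left-hand side is $E[M^{(i)}_{\hat\alpha^*}\mid\epsilon]$ with the reverse martingale $M^{(i)}_t=1\{p_i\leq t\}/t$ of Definition~\ref{DependenceStructures}(c), so everything reduces to an optional sampling theorem for $M^{(i)}$ at $\hat\alpha^*$ with terminal time $1$ (where $M^{(i)}_1\equiv 1$).

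The critical verification is that $\hat\alpha^*$ is a reverse stopping time, i.e.\ $\{\hat\alpha^*\geq s\}\in\mathcal{F}_s$ for every $s\in(0,1]$. For $s>\lambda$ the event is empty since $\hat\alpha^*\leq\hat\alpha_{n:n}\leq\lambda$. For $s\leq\lambda$ we have $\mathcal{F}_\lambda\subseteq\mathcal{F}_s$, so every $\hat\alpha_{j:n}=g_j((\hat F_n(t))_{t\geq\lambda})$ is $\mathcal{F}_s$-measurable; writing $j^\ast(s):=\min\{j:\hat\alpha_{j:n}\geq s\}$ and using the monotonicity of the critical values,
\[
\{\hat\alpha^*\geq s\}\;=\;\{R\geq j^\ast(s)\}\;=\;\bigcup_{j\geq j^\ast(s)}\{n\hat F_n(\hat\alpha_{j:n})\geq j\},
\]
and each indicator $1\{p_\ell\leq\hat\alpha_{j:n}\}$ entering $\hat F_n(\hat\alpha_{j:n})$ for $j\geq j^\ast(s)$ is $\mathcal{F}_s$-measurable because $\hat\alpha_{j:n}\geq s$. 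After conditioning on $\mathcal{F}_\lambda$ the critical values become constants, so $M^{(i)}$ is a reverse martingale bounded by $1/\hat\alpha_{1:n}$ on $[\hat\alpha_{1:n},1]$; optional sampling at the bounded stopping time $\hat\alpha^*$ yields $E[M^{(i)}_{\hat\alpha^*}\mid\mathcal{F}_\lambda,\epsilon]=E[M^{(i)}_1\mid\mathcal{F}_\lambda,\epsilon]=1$, as required.

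For part (b) no martingale structure is available and instead I would use the Blanchard--Roquain integral trick. Writing $g(z):=\int_0^z y\,d\nu(y)$ so that $\hat\alpha_{i:n}=(\alpha/n)g(\hat\rho(i))$, the identity $1/\hat\rho(R)=\int_{\hat\rho(R)}^\infty z^{-2}\,dz$ combined with the monotonicity of $\hat\rho$ and $g$ gives the pointwise bound
\[
\frac{1\{p_i\leq\hat\alpha_{R:n}\}}{\hat\rho(R)}\;\leq\;\int_0^\infty\frac{1\{p_i\leq(\alpha/n)g(z)\}}{z^2}\,dz,
\]
because for $z\geq\hat\rho(R)$ one has $(\alpha/n)g(z)\geq\hat\alpha_{R:n}$. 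Multiplying by $\epsilon_i/\alpha$, taking expectation, and using that for every true $p$-value the marginal law is uniform so that $E[\epsilon_i 1\{U_i\leq c\}]\leq c\,E(\epsilon_i)$,
\[
E\!\left[\frac{\epsilon_i\,1\{p_i\leq\hat\alpha_{R:n}\}}{\alpha\hat\rho(R)}\right]\;\leq\;\frac{E(\epsilon_i)}{n}\int_0^\infty\frac{g(z)}{z^2}\,dz\;=\;\frac{E(\epsilon_i)}{n},
\]
where the final equality is the one-line Fubini computation $\int_0^\infty g(z)z^{-2}\,dz=\int_0^\infty y\int_y^\infty z^{-2}\,dz\,d\nu(y)=\int d\nu(y)=1$. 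Summing over $i$ yields $E[V/\gamma(R)]\leq E(N)/n$.

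\textbf{Main obstacle.} The delicate step is in part (a): the reverse-stopping-time property of $\hat\alpha^*$ is intuitively clear but needs the careful verification above, and the optional sampling theorem must be invoked in a version valid for reverse martingales; conditioning on $\mathcal{F}_\lambda$ is the cleanest device that freezes the critical values and supplies the boundedness required to bypass any uniform integrability issue. Part (b) is essentially bookkeeping once the pointwise integral inequality is in place.
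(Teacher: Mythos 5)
Your proposal is correct and follows essentially the same route as the paper: part (a) is the optional-stopping argument for the reverse martingale $1\{p_i\leq t\}/t$ at the data-dependent SU threshold after conditioning on $\mathcal{F}_\lambda$ to freeze the critical values (the paper applies it once to the aggregated process $V(t)/t$ rather than summand by summand, but this is cosmetic), and part (b) is exactly the Blanchard--Roquain integral device that the paper invokes by citation and that you spell out. One intermediate equation in (a) should be corrected: optional sampling yields $E[M^{(i)}_{\hat\alpha^*}\mid\mathcal{F}_\lambda,\epsilon]=M^{(i)}_\lambda=1\{p_i\leq\lambda\}/\lambda$, not $1$; you reach your stated goal $E[M^{(i)}_{\hat\alpha^*}\mid\epsilon]=1$ only after integrating out $\mathcal{F}_\lambda$ and using the reverse martingale property once more between $\lambda$ and $1$ (likewise, $\{\hat\alpha^*\geq s\}$ also contains $\{R=0,\hat\alpha_{1:n}\geq s\}$, which is harmless since it is $\mathcal{F}_\lambda$-measurable).
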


\begin{remark}\label{RemarkCentral}
(a) Lemma \ref{LemmaCentral} (a) also applies to deterministic critical values $0<\alpha_{1:n}\leq \ldots \alpha_{n:n}< 1$ 
if we put $\lambda = \alpha_{n:n}$. In this case, the reverse martingale assumption (\ref{MartingaleDef}) can be weakened 
in order to prove (\ref{LemmaCentral001}). It is only necessary to assume that  (\ref{MartingaleDef}) is a reverse martingale 
w.r.t. the discrete parameter set $I:=\{\alpha_{1:n},\ldots,\alpha_{n:n},1\}$ and $t\in I$. \\
(b) Storey et al. \cite{storey_taylor_siegmund} already used martingale arguments which have been outlined by 
Scheer \cite{scheer}. \\
(c) In case of deterministic critical values $0<\alpha_{1:n}<\ldots<\alpha_{n:n}<1$ we obtain the inequality ''$\leq$`` 
in (\ref{LemmaCentral001}) under the PRDS model. The proof follows straightforward classical lines, see Heesen \cite{heesen} 
and Meskaldji et al. \cite{meskaldji}. 
\end{remark}

\par\medskip\noindent
{\sc Proof of Lemma \ref{LemmaCentral}.}
(a) Observe that the SU test can be represented by the reverse stopping time 
\begin{equation*}
 \tau = \sup \{ \hat \alpha_{i:n}, i=1,\ldots,n \, : \, p_{i:n} \leq \hat \alpha_{i:n} \}\vee \hat \alpha_{1:n},
\end{equation*} 
which is adapted to the reverse Filtration $(\mathcal{F}_t)_{0 < t \leq 1}$ and where $\sup \emptyset := 0$.
Then every $p$-value $p_i \leq \tau$ is rejected. For $V(t):= \#\{p_i \leq t, \, p_i \mbox{ true}\}$ and 
$R(t)=n\hat F_n(t)$ for $0\leq t \leq 1$ we have $\frac{V}{\gamma(R)}  = \frac{V(\tau)}{\gamma(R(\tau))} $. 
Conditioned under $\mathcal{F}_\lambda$ the critical values $\hat\alpha_{1:n}, \ldots, \hat \alpha_{n:n}$ are fixed 
and therefore, $\tau$ is a discrete stopping time w.r.t. the reverse martingale (\ref{MartingaleDef}) for the periode 
$\hat \alpha_{1:n}\leq t \leq \lambda$. Furthermore, observe that $\hat \alpha_{R(\tau):n} = \tau$ holds 
if $R>0$ since $R(\tau)=R$. Thus by (\ref{MartingaleDef}) and the discrete version of the optional stopping theorem 
\begin{eqnarray*}
 E\left( \frac{V}{\gamma(R)} \Big| \mathcal{F}_\lambda \right) 
&=& E\left( \frac{V(\tau)}{\gamma(R(\tau))} \Big| \mathcal{F}_\lambda \right) 
= E\left( \frac{V(\tau)}{n\hat \alpha_{R(\tau):n}} 1\{ R(\tau)>0 \} \Big| \mathcal{F}_\lambda \right) \\
&=& \frac{1}{n} E\left( \frac{V(\tau)}{\tau} 1\{ R(\tau)>0 \} \Big| \mathcal{F}_\lambda \right) 
= \frac{1}{n} E\left( \frac{V(\tau)}{\tau} \Big| \mathcal{F}_\lambda \right)\\
&=&  \frac{1}{n} E\left( \frac{V(\lambda)}{\lambda} \Big| \mathcal{F}_\lambda \right)
\end{eqnarray*}
holds and integration yields
\begin{equation*}
 E\left( \frac{V}{\gamma(R)} \right) = \frac{1}{n} E\left( \frac{V(\lambda)}{\lambda} \right) 
=  \frac{1}{n} E\left( \frac{V(1)}{1} \right) = \frac{E(N)}{n}.
\end{equation*}
(b) Applying the technique of the proof of Lemma 3.2 of Blanchard and Roquain \cite{blanchard_roquain} yields 
\begin{equation*}
 E\left( \frac{V}{\gamma(R)} \Big| \epsilon \right) 
= \sum_{i\, : \, \epsilon_i=1} E\left( \frac{1\{ p_i \leq \hat \alpha_{R:n} \}}{\alpha \hat\rho(R)} \Big| \epsilon \right)
\leq \frac{N}{n}.
\end{equation*}
There, the technique is formulated for deterministic critical values, but observe that it also works for data dependent 
critical values. 
$\hfill \square$

\par\medskip
When the proof was finished we came across the early paper of Meskaldji et al. \cite{meskaldji} which covers the 
special non-adaptive case of our technical Lemma \ref{LemmaCentral} (b). For deterministic critical values their 
proof also follows the lines of Blanchard and Roquain \cite{blanchard_roquain_2}.

\subsection{ Proofs of Section \ref{SecExamplesRevMart}}

{\sc Proof of Example \ref{NeuesExamplePosDep}.} 
Below the proof of part (a) is sketched. The calculations for (b) are similar. Recall from Finner et al. \cite{finner_dickhaus_roters}, 
p. 604, the FDR formula for $n=n_0=2$ 
\begin{equation}
\begin{split}
  &FWER = FDR(2) \\
&\ \ = \alpha \left[ P\left(R\geq1|p_1\leq\frac{\alpha}{2}\right) - P\left(R\geq2|p_1\leq\frac{\alpha}{2}\right) + P\left(R\geq2|p_1\leq\alpha\right)\right]
\end{split}
\end{equation}
since $(X_1,X_2)\overset{d}{=} (X_2,X_1)$ holds for the normal sample . For $\alpha=\frac{1}{2}$ we have by conditioning w.r.t. $X_1=x_1$ 
\begin{eqnarray*}
 P\left(R\geq2, p_1\leq \frac{1}{2}\right) &=& P\left(p_1\leq \frac{1}{2},p_2\leq \frac{1}{2}\right) = P(X_1\leq0,X_2\leq0)  \\
&=& \int_0^\infty \Phi(x_1)\phi(x_1)dx_1,
\end{eqnarray*}
where $\phi=\Phi'$. Similarly, 
\begin{equation}
 P\left(R\geq2,p_1\leq \frac{1}{4}\right) = \int_{-\Phi^{-1}(\frac{1}{4})}^\infty \Phi(x_1)\phi(x_1)dx_1.
\end{equation}
Notice that 
\begin{equation}
 \int_a^\infty\Phi(x)\phi(x)dx = \frac{1}{2}\Phi(x)^2 \big|_a^\infty = \frac{1}{2} [1-\Phi(a)^2]
\end{equation}
follows. Thus, $P(R\geq2|p_1\leq\frac{1}{2}) = \frac{3}{4}$ and $P(R\geq2|p_1\leq\frac{1}{4}) = \frac{7}{8}$ holds which implies the result. 
\hfill $\square$

\par\bigskip\noindent
{\sc Proof of Example \ref{ExampleRevMartingale}.} 
Part (b) and (c) are obvious. To prove (a) define 
\begin{equation*}
 M_t^{(i)} = \frac{1\{ Z_i \leq t \}}{H(t)}, \quad \mbox{for } t \in \{ H>0\}.
\end{equation*}
In case $n=1$ it is well known that $M_t^{(1)}$ is a reverse martingale w.r.t. $G_t= \sigma(M_s^{(1)}\, : \, s\geq t)$. 
In case $n>1$ let us prove that $M_t^{(1)}$ is a reverse martingale w.r.t. 
$\mathcal{F}_t := \sigma((M_s^{(j)})_{s\geq t}, 1\leq j \leq n)$. Obviously, $E(M_t^{(1)}|\mathcal{F}_s)=0=M_s^{(1)}$ holds 
if $Z_1 > s$. Otherwise, $Z_1\leq s$ implies $X_1\leq s$ and $Y \leq s$. Thus $1\{ Z_i \leq \tau \} = 1\{ X_i \leq \tau \}$ 
holds for $i \geq 2$ and all $\tau \geq s$. Let $f_i : [s,\infty) \to \{0,1\}^{[s,\infty)}$ be a possible path of 
$\tau \mapsto 1\{X_i \leq \tau \}$, $\tau \geq s$. If $1\{Z_1 \leq s \}=1$ holds we have 
\begin{eqnarray*}
&& \hspace{-0.5cm} E( M_t^{(1)} | \mathcal{F}_s ) \\
&&=  E( M_t^{(1)} |  1\{Z_1 \leq s \}=1, (1\{ X_i\leq \tau \})_{\tau \geq s} = (f_i(\tau))_{\tau \geq s}, i=2,\ldots, n)\\
&&= E( M_t^{(1)} |  1\{Z_1 \leq s \}=1) \\
&&= E\left( M_t^{(1)} \Big|  M_s^{(1)} = \frac{1}{H(s)}\right) = M_s^{(1)}.
\end{eqnarray*}
Above we used that $1\{Z_1 \leq s \}$ and $(X_2,\ldots, X_n)$ are independent. Observe that the time change 
\begin{equation*}
 u \mapsto M_{H^{-1}(u)}^{(i)} = \frac{1\{ H(Z_i) \leq u \}}{u} 
\end{equation*}
by the inverse distribution function $H^{-1}$ preserves the reverse martingale.
\hfill $\square$

\subsection{Proofs of Section \ref{SecIneq}}

 \begin{lemma}\label{ApplicationsLemma001}
(a) Assume the PRDS model or reverse martingale model (which include the BI model). Consider the SU test 
with deterministic critical values (\ref{stepup001}) and suppose that there exist some $k$ and $0<c<1$ with 
$\alpha_{j:n} \leq \frac{jc}{n}$ for all $j\leq k$.
\begin{itemize}
 \item[(i)] Then $FDR \leq \frac{cE(N)}{n} + P(R>k)$ holds.
 \item[(ii)] Suppose that the assumption holds for $k=n$. If in addition $\alpha_{j:n} < \frac{jc}{n}$ holds for a 
fixed $j$ with $P(R=j, V>0) > 0$, then $FDR < \frac{cE(N)}{n}$ follows for the BI model.
\end{itemize}
(b) Assume the reverse martingale model, consider the SU test with critical values (\ref{stepup001}) 
and suppose that $\alpha_{j:n} \geq \frac{jc}{n}$ holds for all $j \leq n$. 
\begin{itemize}
 \item[(i)] Then $FDR \geq \frac{cE(N)}{n}$ holds.
 \item[(ii)] If in addition $\alpha_{j:n} > \frac{jc}{n}$ holds for some $j$ with 
$P( R=j, V>0 ) > 0$, then $FDR > \frac{cE(N)}{n}$ holds at least under the BI model.
\end{itemize}
\end{lemma}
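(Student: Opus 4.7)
The plan is to reduce both parts to the identity from Lemma \ref{LemmaCentral}(a), namely
\begin{equation*}
E\!\left(\frac{V}{n\alpha_{R:n}}\right) = \frac{E(N)}{n}
\end{equation*}
under the reverse martingale model, and the ``$\leq$'' version for PRDS noted in Remark \ref{RemarkCentral}(c). The convention $\alpha_{0:n}=\alpha_{1:n}>0$ makes $V/(n\alpha_{R:n})$ well defined, and it vanishes on $\{R=0\}$ (where $V=0$ as well). With this in hand the whole argument becomes a pointwise comparison between $V/(n\alpha_{R:n})$ and $V/(Rc)$ on the relevant event, followed by integration.

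For (a)(i), the hypothesis $\alpha_{j:n}\leq jc/n$ for $j\leq k$ gives $n\alpha_{R:n}\leq Rc$ on $\{R\leq k\}$, so
\begin{equation*}
\frac{V}{Rc}\,1\{R\leq k\}\ \leq\ \frac{V}{n\alpha_{R:n}}\,1\{R\leq k\}\ \leq\ \frac{V}{n\alpha_{R:n}}.
\end{equation*}
Taking expectations and applying Lemma \ref{LemmaCentral} (equality under the reverse martingale model, ``$\leq$'' under PRDS) yields $E[(V/R)\,1\{R\leq k\}]\leq cE(N)/n$. The leftover piece $E[(V/R)\,1\{R>k\}]$ is at most $P(R>k)$ because $V/R\leq 1$, and adding the two estimates delivers the stated bound.

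For (a)(ii) with $k=n$ in the BI model, one has $P(R>k)=0$, so the previous step already gives $FDR\leq cE(N)/n$, and one only needs to sharpen to a strict inequality. In the BI model Lemma \ref{LemmaCentral}(a) holds as an equality, and the strict hypothesis $\alpha_{j_0:n}<j_0c/n$ at some $j_0$ with $P(R=j_0,V>0)>0$ turns the pointwise comparison into a strict inequality $V/(Rc)<V/(n\alpha_{R:n})$ on a set of positive mass, which upgrades the integrated inequality to a strict one. Part (b) is handled symmetrically: the reversed hypothesis $\alpha_{j:n}\geq jc/n$ produces $V/(n\alpha_{R:n})\leq V/(Rc)$ on $\{R\geq 1\}$ (both sides zero on $\{R=0\}$), so the reverse-martingale equality gives $E(N)/n\leq FDR/c$, proving (b)(i); the strict refinement (b)(ii) then follows by the same upgrade as in (a)(ii).

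The one genuine subtlety is that Lemma \ref{LemmaCentral} only yields a one-sided inequality $E[V/(n\alpha_{R:n})]\leq E(N)/n$ under PRDS. This happens to be precisely the direction required for the upper bound in (a), but is useless for the lower bound in (b); that is why part (b) is stated exclusively under the reverse martingale assumption, whereas part (a) extends to PRDS. A minor bookkeeping point is the handling of $\{R=0\}$, which is resolved uniformly by the $0/0:=0$ convention together with the fact that $V=0$ and $n\alpha_{0:n}>0$ there.
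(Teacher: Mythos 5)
Your proof is correct and follows essentially the same route as the paper: both rest on the identity $E\bigl(V/(n\alpha_{R:n})\bigr)=E(N)/n$ from Lemma \ref{LemmaCentral}(a) (with the one-sided PRDS version of Remark \ref{RemarkCentral}(c)), the same split over $\{R\le k\}$ and $\{R>k\}$ with $V/R\le 1$ for (a)(i), and the same pointwise comparison of $V/(Rc)$ with $V/(n\alpha_{R:n})$ — strict on a set of positive mass — for the strict refinements in (a)(ii) and (b)(ii). Your closing observation about why part (b) is confined to the reverse martingale model is exactly the point the paper relies on.
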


\begin{proof}
(a)(i) Let $\gamma(i)= n \alpha_{i:n}$ and observe that $\frac{\gamma(j)}{j} \leq c$ holds for all $j \leq k$. 
Then Lemma \ref{LemmaCentral} (a) and Remark \ref{RemarkCentral} (c) imply 
\begin{eqnarray*}
 FDR 
 &\leq& E\left(\frac{V}{\gamma(R)}\frac{\gamma(R)}{R} 1\{R \leq k\}\right) + P(R > k)\\
 &\leq& \frac{cE(N)}{n} + P(R > k)
\end{eqnarray*}
for the PRDS and reverse martingale model, respectively.\\
(ii) By (i) we already know $FDR \leq \frac{cE(N)}{n}$ and in case of equality we have $0 = E(\frac{cV}{\gamma(R)}-\frac{V}{R})$. 
But observe that $\frac{cV}{\gamma(R)} - \frac{V}{R} \geq 0$ holds. Thus, 
\begin{equation}\label{PlusPlus}
 E\left(V\left(\frac{c}{\gamma(R)} - \frac{1}{R}\right)\right)  \geq \left( \frac{c}{\gamma(j)}- \frac{1}{j} \right) \cdot P(R=j, V>0)
\end{equation}
follows by our assumptions, a contradiction. \\
(b)(i) Observe that $\frac{\gamma(j)}{j} \geq c$ holds for all $j\leq n$. Thus Lemma \ref{LemmaCentral} implies
\begin{equation*}
 FDR = E\left( \frac{V}{\gamma(R)}\frac{\gamma(R)}{R} \right) 
\geq c E\left( \frac{V}{\gamma(R)} \right) \geq \frac{cE(N)}{n}
\end{equation*}
for the reverse martingale model.\\
(ii) Observe that $\frac{cV}{\gamma(R)} - \frac{V}{R} \leq 0$ and $\frac{c}{\gamma(j)} - \frac{1}{j} < 0$ hold and 
the assertion follows in the same way as in (a) (ii). 
\end{proof}

\par\bigskip\noindent
{\sc Proof of Proposition \ref{AppicationsLemmaDirect}.}
The proposition is a direct application of Lemma \ref{LemmaCentral} (a) and Remark \ref{RemarkCentral} (c) 
with deterministic critical values. Again let $\gamma(i)=n\alpha_{i:n}$, $0\leq i \leq n$. For the reverse 
martingale and PRDS models we have 
\begin{equation}\label{AppicationsLemmaDirectProof001}
 FDR = E\left( \frac{V}{\gamma(R)} \frac{\gamma(R)}{R} \right) 
\leq E\left( \frac{V}{\gamma(R)} \right)  \max_{i \leq n} \frac{n\alpha_{i:n}}{i} 
\leq \frac{E(N)}{n} \max_{i \leq n} \frac{n\alpha_{i:n}}{i} 
\end{equation}
by Lemma \ref{LemmaCentral} (a) and Remark \ref{RemarkCentral} (c). Under the assumptions of (b)(i) the first 
inequality in (\ref{AppicationsLemmaDirectProof001}) is actually a strict inequality ''$<$``. The other 
inequalities follow analogously. \hfill $\square$

\par\bigskip\noindent
{\sc Proof of Lemma \ref{ApplicationsLemma002}.}
(a) In this proof we may always choose the Dirac uniform case $\bar f = (p_j : \epsilon_j=0) = (0,\ldots, 0)$ for the 
false $p$-values. Let $N = n_0 = n+1-j$ be deterministic. Thus by Lemma \ref{LemmaCentral} (a)
\begin{eqnarray*}
 \frac{n_0}{n} &=& E\left( \frac{V}{\gamma(R)} \Big| n_0, \bar f \right) 
=  E\left( \frac{V}{R}\frac{R}{\gamma(R)} \Big| n_0, \bar f  \right)\\
&\leq& E\left( \frac{V}{R} \Big| n_0, \bar f  \right) \frac{j}{\gamma(j)}
\end{eqnarray*}
since $R \geq j$ holds when $V$ is positive and $j \mapsto \frac{j}{\gamma(j)} = \frac{j\alpha}{n\alpha_{j:n}}$ 
is non-increasing. By our assumption the inequality $E(\frac{V}{R} | n_0, \bar f ) \leq \alpha$ proves the result.\\ 
(b) In that case we have $\frac{j}{\gamma(j)} \geq \frac{k}{\gamma(k)} > \frac{k+1}{\gamma(k+1)}$ for $j \leq k$. 
Obviously, $P(R=k+1 | n_0, \bar f ) >0$ holds for $N = n_0 = n+1-j$ with $j \leq k$. Thus we have strict inequality in the proof of 
part (a) and $\frac{n_0}{n} < \frac{j\alpha}{\gamma(j)}$ follows. \hfill $\square$

\par\bigskip\noindent
{\sc Proof of Corollary \ref{ApplicationsCorollary001}.}
(a) is a special case of Lemma \ref{ApplicationsLemma002}.\\
(b) Assume the multiple test is not a BH test. Take the first value $k$ with
\begin{equation*}
 \frac{\alpha}{n} = \frac{\alpha_{1:n}}{1} = \ldots = \frac{\alpha_{k:n}}{k} < \frac{\alpha_{k+1:n}}{k+1}.
\end{equation*}
Then Lemma \ref{ApplicationsLemma002} (b) implies $\alpha_{1:n} < \frac{\alpha}{n}$ which contradicts our assumption.\\ 
(c) On the other hand $\alpha_{1:n}=\frac{\beta}{n}$ implies $\frac{\alpha_{j:n}}{j} \geq \frac{\beta}{n}$ and 
$FDR \geq \frac{\beta E(N)}{n}$ by Lemma \ref{ApplicationsLemma001} (b). \hfill $\square$

%
%

\medskip 
The following technical tools of Lemma \ref{LemmaProp3-2ab} and Remark \ref{RemarkRecursion} supplement the inequalities 
of Section \ref{SecIneq}. 

\begin{lemma}\label{LemmaProp3-2ab}
 Consider SU tests based on the critical values (\ref{CritValBeliebig}) with $0<a<b$ with fixed value $b$ 
and an adjustment of $a$. \\
(a) Under the reverse martingale model and the Dirac uniform configuration DU$(n_0)$ the FDR is given by 
\begin{equation}\label{FDR_BGS}
 g_{a,b}(n_0) := FDR_{DU}(n_0) = \frac{\alpha n_0}{n+b} + \frac{a E_{DU}(V|n_0)}{n+b},
\end{equation}
with ''$\leq$`` under PRDS. Then 
\begin{equation}\label{proposition_29Jan001Formel001}
 \sup_{P \in \mathcal{P}_{BI}} FDR_{(b,a)} = \max_{1\leq n_0 \leq n} g_{a,b}(n_0)
\end{equation}
holds. \\
(b) The following inequality holds for the expected number of false rejections $E_{DU}(V|n_0)$ under DU$(n_0)$ 
\begin{equation*}
 n_0 \alpha_{n+1-n_0:n} \leq E_{DU}(V|n_0).
\end{equation*} 
Note that this statement holds without any dependence assumption on $U_1,\ldots, U_n$.
\end{lemma}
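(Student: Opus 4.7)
Plan. For part (a) I would apply Lemma \ref{LemmaCentral} (a) (and, for PRDS, its inequality version from Remark \ref{RemarkCentral} (c)) to the deterministic critical values $\alpha_{j:n}=j\alpha/(n+b-ja)$. With $\gamma(j):=n\alpha_{j:n}$ the key is the algebraic identity
\begin{equation*}
  \frac{V}{\gamma(R)}\;=\;\frac{V(n+b-Ra)}{nR\alpha}\;=\;\frac{n+b}{n\alpha}\cdot\frac{V}{R}\;-\;\frac{a}{n\alpha}\,V,
\end{equation*}
so that the reverse-martingale identity $E[V/\gamma(R)\mid N=n_0]=n_0/n$ rearranges at once to (\ref{FDR_BGS}); under PRDS the same rearrangement (with the positive coefficient $(n+b)/(n\alpha)$ in front of $E[V/R\mid n_0]$) preserves the direction and gives the stated ``$\leq$''.

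For the supremum identity (\ref{proposition_29Jan001Formel001}) I would note that, because $a>0$, the ratio $\alpha_{j:n}/j=\alpha/(n+b-ja)$ is non-decreasing in $j$, so assumption (\ref{SecApplications002}) is in force. The Benjamini--Yekutieli least-favorable-configuration theorem quoted just before Lemma \ref{ApplicationsLemma002} then shows that, for each fixed $N=n_0$, the DU$(n_0)$ configuration maximizes the conditional FDR over $\mathcal{P}_{BI}$. Conditioning on $N$ and maximizing over its possible values yields $\sup_{\mathcal{P}_{BI}}FDR_{(b,a)}=\max_{1\leq n_0\leq n}g_{a,b}(n_0)$, the supremum being attained by taking $\epsilon$ deterministic with the maximizing $n_0^{*}$ together with DU false $p$-values.

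For part (b) I would work path-wise under DU$(n_0)$. The $n-n_0$ false $p$-values equal $0$, so $p_{i:n}=0$ for $i\leq n-n_0$ while $p_{n+1-n_0:n}$ is the smallest true $p$-value; hence $R\geq n-n_0$ always, and $R\geq n+1-n_0$ as soon as at least one true $p$-value does not exceed $\alpha_{n+1-n_0:n}$. By the monotonicity (\ref{stepup001}) this forces $\alpha_{R:n}\geq \alpha_{n+1-n_0:n}$, so every true $p$-value below $\alpha_{n+1-n_0:n}$ is rejected and we obtain the pointwise bound
\begin{equation*}
  V\;\geq\;\#\{j:\epsilon_j=1,\ U_j\leq \alpha_{n+1-n_0:n}\},
\end{equation*}
which also holds trivially when the set on the right is empty (since then $R=n-n_0$ and $V=0$). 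Taking expectations and using only the marginal uniformity of each $U_j$ on $(0,1)$ (no joint distributional assumption) yields $E_{DU}(V\mid n_0)\geq n_0\alpha_{n+1-n_0:n}$.

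I do not expect any serious obstacle. The only delicate points are (i) checking that the rearrangement in the PRDS step of (a) keeps the inequality in the correct direction, which it does because $n+b>0$, and (ii) treating both cases $R=n-n_0$ and $R\geq n+1-n_0$ uniformly in the path-wise argument of (b); both are essentially immediate.
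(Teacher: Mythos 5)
Your proposal is correct and follows essentially the same route as the paper: part (a) is the same algebraic rearrangement of the identity $E[V/\gamma(R)\mid n_0]=n_0/n$ from Lemma \ref{LemmaCentral} (with the PRDS inequality from Remark \ref{RemarkCentral}(c)), the supremum identity rests on the same Benjamini--Yekutieli least-favorable DU argument, and part (b) is the paper's pointwise bound $1\{p_i\leq\alpha_{n+1-n_0:n}\}\leq 1\{p_i\leq\alpha_{R:n}\}$ followed by marginal uniformity. One trivial slip: in the empty-set case of (b) it need not hold that $R=n-n_0$ and $V=0$ (a step-up test can still reject true $p$-values exceeding $\alpha_{n+1-n_0:n}$), but the bound there reads $V\geq 0$ and is true regardless, so nothing is affected.
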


\begin{proof}
 Part (a) follows from (\ref{LemmaCentral001}) which reads under DU with $n_1 = n - n_0$ as
\begin{equation*}
 \frac{n_0}{n} = \frac{1}{n\alpha} E\left( \frac{V(n+b)-(n_1+V)aV}{n_1+V} \right).
\end{equation*}
Equation (\ref{proposition_29Jan001Formel001}) holds since the DU configuration is least favorable. \\
(b) Without restrictions we assume that the true $p$-values are given by $p_1,\ldots,p_{n_0}$. Then we obtain 
\begin{eqnarray*}
 E_{DU}(V|n_0) &=& \sum_{i=1}^{n_0} E_{DU}(1\{p_i \leq \alpha_{R:n}\}|n_0) \\
&\geq& \sum_{i=1}^{n_0} E_{DU}(1\{p_i \leq \alpha_{n+1-n_0:n}\}|n_0) = n_0 \alpha_{n+1-n_0:n}
\end{eqnarray*}
since $R \geq n+1-n_0$ holds on $\{p_i \leq \alpha_{R:n}\}$ for the DU configuration for all true $p_i$.
\end{proof}

By different methods Scheer \cite{scheer} obtained (\ref{FDR_BGS}) for $a=1-\alpha$.

\begin{remark}\label{RemarkRecursion}
 The expected number of false rejections $h(n_0,\alpha) = E_{DU}(V^{BH,\alpha}|n_0)$ is easy to compute by the following 
recursion 
$h(1,\alpha) = \alpha$ and  
\begin{equation}\label{RecursionInduction}
 h(n_0, \alpha) = \frac{n_0 \alpha}{n}[h(n_0-1,\alpha) + n-n_0+1] 
\end{equation}
 for the BH SU test which equals by induction
\begin{equation}\label{RecursionInduction2}
 h(n_0,\alpha) = \frac{n_0 !}{n^{n_0-1}}\alpha^{n_0} + \sum_{j=1}^{n_0-1} \frac{n_0!}{j!} \left(\frac{\alpha}{n}\right)^{n_0-j}(n-j).
\end{equation}
Formula (\ref{RecursionInduction2}) is due to Finner and Roters \cite[p. 991]{finner_roters} which is now a direct consequence 
of our equalities, see (\ref{RecursionInductionProof}).
\end{remark}

\par\medskip\noindent
{\sc Proof of the Recursion (\ref{RecursionInduction}) of Remark \ref{RemarkRecursion}.}
Consider a true $p$-value, say $p_1$, of $p=(p_1,\ldots,p_n)$. Put $p^{(1)} =(0,p_2,\ldots,p_n)$ and let $R(p), R(p^{(1)})$ be 
the number of rejections. Simple calculations show that $R(p)= R(p^{(1)})$ holds on the set $\{ p_1 \leq \alpha_{R(p):n} \}$.
Observe, when $p_1$ is rejected, then $p_1$ could also be zero. Moreover, we have 
$\{ p_1 \leq \alpha_{R(p):n} \} = \{ p_1 \leq \alpha_{R(p^{(1)}):n} \}$ in any case for SU tests which implies
\begin{equation}
 E(1\{p_1 \leq \alpha_{R(p):n}\}|n_0) =  E( \alpha_{R(p^{(1)}):n}|n_0) = \frac{\alpha}{n} E( R(p^{(1)})|n_0) 
\end{equation}
by Fubini's theorem and $E(V|n_0)=\alpha\frac{n_0}{n}E(R(p^{(1)})|n_0)$. Under DU$(n_0)$ we obtain 
\begin{equation}\label{RecursionInductionProof}
 E_{DU}( R(p^{(1)}) |n_0) =  E_{DU}( V(p^{(1)})+n-n_0 |n_0) =  E_{DU}( V(p)+n-n_0+1 |n_0-1) 
\end{equation}
and the recursion. Formula (\ref{RecursionInduction2}) follows by induction.
\hfill $\square$

\begin{remark}\label{RemarkzumCentralLemmaProof}
 Under the BI model the proof of statement (\ref{LemmaCentral001}) can be simplified as follows. Consider deterministic 
critical values. Using the proof of Remark \ref{RemarkRecursion} above we may again conclude by Fubini's Theorem 
\begin{eqnarray*}
 E\left( \frac{1\{p_1 \leq \alpha_{R:n}\}}{\gamma(R)} \Big| \epsilon_1=1 \right) 
 &=& E\left( \frac{1\{p_1 \leq \alpha_{R(p^{(1)}):n}\}}{\gamma(R(p^{(1)}))} \Big| \epsilon_1=1 \right) \\
&=& E\left( \frac{ \alpha_{R(p^{(1)}):n}}{\gamma(R(p^{(1)}))} \Big| \epsilon_1=1 \right) = \frac{1}{n}.
\end{eqnarray*}
The method of proof can be used to prove the discussion about least favorable ''false p-values`` given by 
Benjamini and Yekutieli \cite{benjamini_yekutieli} Theorem 5.3.
\end{remark}

\subsection{Proofs of Section \ref{NeuSec4-1}}

\noindent
{\sc Proof of Proposition \ref{proposition_29Jan001}.}
By (\ref{proposition_29Jan001Formel001}) we may concentrate on the DU case for the BI model. Note also 
that $g_{a,b}(n_0)$ is strictly increasing in $a$ since the critical values are ordered. On the other 
hand expression (\ref{proposition_29Jan001Formel001}) converges to $\frac{\alpha n}{n+b} = \alpha' < \alpha$ 
for $a \searrow 0$. Thus, the solution $a_1$ of (\ref{FDR_adjustment001}) is unique and it is easy to see that 
$0< a_1 < b$ holds.
\hfill $\square$

\medskip
Lemma \ref{LemmaProp3-2ab} leads to a new upper bound for the crucial parameter $a_1$ established in 
Proposition \ref{proposition_29Jan001}.

\begin{proposition}\label{PropositionSharperIneq}
 Consider the assumptions of Proposition \ref{proposition_29Jan001}. Introduce $a_0$ as the unique positive solution of
\begin{equation}
 \alpha = \max_{1\leq n_0 \leq n}\left( \frac{\alpha n_0}{n+b} + \frac{a h(n_0,\alpha')}{n+b} \right),
\end{equation}
where $\alpha' :=\frac{\alpha n}{n+b}$. Then the crucial parameter $a_0$ satisfies $0<a_1<a_0$. 
\end{proposition}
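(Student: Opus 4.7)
The plan is to compare the SU test with critical values (\ref{CritValBeliebig}) to the plain BH SU test at the reduced level $\alpha':=\alpha n/(n+b)$ and then to feed the comparison into the exact DU formula (\ref{FDR_BGS}). The starting point is the elementary pointwise inequality
\[
\alpha_{j:n}=\frac{j\alpha}{n+b-ja}>\frac{j\alpha}{n+b}=\frac{j\alpha'}{n},\qquad 1\le j\le n,\; a>0,
\]
which says that the critical values in (\ref{CritValBeliebig}) strictly dominate the BH critical values at level $\alpha'$. Monotonicity of SU procedures in their critical values then couples the two tests on the same sample so that the number of rejected true $p$-values satisfies $V\ge V^{BH,\alpha'}$ pathwise, and since the true $p$-values are absolutely continuous the gap between the two families of critical values leaves a set of positive probability on which $V>V^{BH,\alpha'}$ whenever $n_0\ge 1$. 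Hence
\[
E_{DU}(V\,|\,n_0)>h(n_0,\alpha'),\qquad 1\le n_0\le n.
\]

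Plugging this into (\ref{FDR_BGS}) yields, for all $a>0$ and all $n_0\ge 1$,
\[
g_{a,b}(n_0)=\frac{\alpha n_0}{n+b}+\frac{a\,E_{DU}(V\,|\,n_0)}{n+b}>\frac{\alpha n_0}{n+b}+\frac{a\,h(n_0,\alpha')}{n+b}=:\tilde g_{a,b}(n_0).
\]
Taking the maximum over $n_0\in\{1,\dots,n\}$ preserves the strict inequality, and combining with (\ref{proposition_29Jan001Formel001}) I obtain
\[
\sup_{P\in\mathcal{P}_{BI}}FDR_{(b,a)}>\max_{1\le n_0\le n}\tilde g_{a,b}(n_0)\qquad\text{for all }a>0.
\]

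The map $a\mapsto\max_{1\le n_0\le n}\tilde g_{a,b}(n_0)$ is a maximum of affine functions of $a$ whose slopes $h(n_0,\alpha')/(n+b)$ are non-negative and strictly positive for, e.g., $n_0=1$ (slope $\alpha'/(n+b)$). Thus this map is continuous, strictly increasing in $a$, equals $\alpha n/(n+b)<\alpha$ at $a=0$ and tends to $+\infty$ as $a\to\infty$; consequently the equation defining $a_0$ has a unique positive solution. Evaluating the previous display at $a=a_1$ and using $\sup_{P\in\mathcal{P}_{BI}}FDR_{(b,a_1)}=\alpha$ from Proposition \ref{proposition_29Jan001}, I get $\max_{n_0}\tilde g_{a_1,b}(n_0)<\alpha=\max_{n_0}\tilde g_{a_0,b}(n_0)$, and strict monotonicity in $a$ then forces $a_1<a_0$; combined with $a_1>0$ this proves $0<a_1<a_0$.

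The main obstacle is the coupling step: one must turn the strict critical-value inequality into a \emph{strict} expectation inequality for $V$ under DU$(n_0)$, not merely $V\ge V^{BH,\alpha'}$ almost surely. This reduces to showing that for some index $j$ the order statistic $p_{j:n}$ falls into the nonempty open gap $(j\alpha'/n,\alpha_{j:n})$ with positive probability, which follows from the absolute continuity of the true $p$-values together with the fact that under DU$(n_0)$ with $n_0\ge 1$ the BH rejection count $R^{BH,\alpha'}$ is not almost surely maximal.
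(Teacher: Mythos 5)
Your proof is correct and follows essentially the same route as the paper: dominate the critical values (\ref{CritValBeliebig}) by the BH critical values at level $\alpha'=\alpha n/(n+b)$, deduce $E_{DU}(V\,|\,n_0)\geq h(n_0,\alpha')$, plug into (\ref{FDR_BGS}), and conclude by monotonicity in $a$. The only difference is that you carefully justify the \emph{strict} inequality (via the coupling and the positive-probability gap event), a point the paper dismisses with ``it is easy to see.''
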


\begin{proof}
Observe first, that the coefficients $\alpha_{i:n} \geq \frac{\alpha' i}{n}$ dominate the BH critical values for the 
choice of $\alpha'$. Thus we have $E_{DU}(V|n_0) \geq h(n_0,\alpha')$ and 
\begin{equation*}
 g_{a,b}(n_0) \geq \frac{\alpha n_0}{n+b} + a \frac{h(n_0,\alpha')}{n+b}.
\end{equation*}
Hence, it is easy to see that the solution $a_1$ of (\ref{FDR_adjustment001}) satisfies $0< a_1 < a_0$.
\end{proof}

\par\bigskip\noindent
{\sc Proof of Proposition \ref{FurtherFDRAdjustmentProp}.}
The requirements (\ref{SecApplications002}) remain true for the new coefficients (\ref{ModificationCritVal}) and we may 
restrict ourselves to the worst DU case. Here we have
\begin{equation*}
 FDR_{DU}(n_0) = E\left( \frac{V}{n-n_0+V} \right).
\end{equation*}
We see that FDR$_{DU}(n_0)$ is ordered by ''$\leq$`` when the critical values and thus the $V$'s are ordered since 
$x\mapsto \frac{x}{n-n_0-x}$ increases. Observe that $k=1$ yield a BH test with FDR $<\alpha$ and the proof is finished.
\hfill $\square$

\subsection{Proofs of Section \ref{NeuSec4-2}}
In regular cases the inequalities (\ref{lower_and_upper_FDR_bound}) are asymptotically sharp. For this purpose assume 
that the critical values are generated by a function $h$ via
\begin{equation}\label{Applications001}
 \alpha_{i:n} = h\left( \frac{i}{n} \right), \quad 1 \leq i \leq n.
\end{equation}
At this stage we require that $h:[0,1]\to[0,1]$ is a non-decreasing function with $h(0)=0$, $h(1)<1$ and 
$0 < c_1 = \inf_{0<x<1} \frac{h(x)}{x}, \ \sup_{0<x<1} \frac{h(x)}{x} = c_2 <1$. Then under regularity assumptions 
these bounds (\ref{lower_and_upper_FDR_bound}) are asymptotically sharp in the sense that for $c\in\{c_1,c_2\}$ there 
exist sequences of BI distributions $P_n$ so that 
\begin{equation}
 \lim_{n\to \infty} FDR_{P_n} = \lim_{n\to \infty}\frac{E(N_n)}{n}c.
\end{equation}

\begin{lemma}\label{ApplicationsLemma003}
 Consider the SU test with critical values (\ref{Applications001}) given by a continuous function $h$. Suppose that 
$\frac{R_n}{n} \to K$ converges in probability for some $0<K<1$ and suppose that 
$0 < \lim_{n\to \infty} \frac{E(N_n)}{n} < 1$ exists. Then 
$\lim_{n \to \infty} FDR_n = \frac{h(K)}{K} \lim_{n \to \infty}  \frac{E(N_n)}{n}$.
\end{lemma}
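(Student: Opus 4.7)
The plan is to convert $FDR_n = E(V_n/R_n)$ into a product of two factors, one of which has a known expectation via the central Lemma~\ref{LemmaCentral} and the other of which converges in probability by continuous mapping, and then close via bounded convergence.

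Set $\gamma_n(i):=n\alpha_{i:n}=nh(i/n)$ and, on the event $\{R_n>0\}$, write
\[
\frac{V_n}{R_n}\;=\;\frac{V_n}{\gamma_n(R_n)}\cdot\frac{\gamma_n(R_n)}{R_n}\;=\;Z_n\,Y_n,\qquad Z_n:=\frac{V_n}{\gamma_n(R_n)},\quad Y_n:=\frac{h(R_n/n)}{R_n/n}.
\]
By Lemma~\ref{LemmaCentral}(a) (deterministic critical values, BI model), $E(Z_n)=E(N_n)/n=:c_n$, and by hypothesis $c_n\to c:=\lim E(N_n)/n\in(0,1)$. Since $R_n/n\to K$ in probability with $K\in(0,1)$ and $h$ is continuous with $h(x)/x$ continuous at $K$, the continuous mapping theorem gives $Y_n\to h(K)/K$ in probability. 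In particular $P(R_n=0)\le P(|R_n/n-K|\ge K/2)\to 0$, so the contribution of $\{R_n=0\}$ to $FDR_n$ vanishes and we may just work with $E(Z_nY_n)$.

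Next I would decompose
\[
FDR_n-\tfrac{h(K)}{K}c_n\;=\;E\!\left(Z_n\bigl(Y_n-\tfrac{h(K)}{K}\bigr)\right).
\]
The key remark is that the standing assumptions of Section~\ref{NeuSec4-2} include $c_1=\inf_{0<x<1}h(x)/x>0$ and $c_2=\sup_{0<x<1}h(x)/x<1$. Hence $Y_n\in[c_1,c_2]$ is uniformly bounded and $Z_n=(V_n/R_n)/Y_n\le 1/c_1$ is uniformly bounded as well. Therefore the integrand $Z_n(Y_n-h(K)/K)$ is dominated by the constant $(c_2+h(K)/K)/c_1$ and tends to $0$ in probability, so by bounded convergence $E(Z_n(Y_n-h(K)/K))\to 0$. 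Combining with $c_n\to c$ yields $FDR_n\to (h(K)/K)\,c$, which is exactly the asserted limit.

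The only step requiring more than bookkeeping is the interchange of limit and expectation; without the uniform boundedness of $Z_n$ (equivalently, the lower bound $c_1>0$ on $h(x)/x$) one would need to argue uniform integrability of $Z_n$ separately, for instance by first truncating on the event $\{|R_n/n-K|<\delta\}$, where $\gamma_n(R_n)\ge nh(K)/2$ and thus $Z_n\le 2/h(K)$, and then controlling the small-probability complement through the trivial bound $Z_nY_n=V_n/R_n\le 1$. With the $c_1>0$ hypothesis this truncation is unnecessary and the bounded convergence step is immediate, which is why I view that hypothesis as the linchpin of the argument.
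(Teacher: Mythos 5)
Your proof is correct and follows essentially the same route as the paper: both factor $V_n/R_n$ as $\bigl(V_n/\gamma_n(R_n)\bigr)\cdot\bigl(h(R_n/n)/(R_n/n)\bigr)$, invoke Lemma \ref{LemmaCentral}(a) for $E\bigl(V_n/\gamma_n(R_n)\bigr)=E(N_n)/n$, and pass to the limit using boundedness of both factors together with convergence in probability of the second. The only cosmetic difference is that the paper localizes to a neighborhood $A(K)$ where $h(x)/x\geq d>0$ and argues via distributionally convergent subsequences, whereas you use the standing global bound $c_1>0$ and bounded convergence directly; both are valid under the assumptions accompanying (\ref{Applications001}).
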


Suppose that $c_1 = \frac{h(x_1)}{x_1}$ as well as $c_2 = \frac{h(x_2)}{x_2}$ are attained for some $x_1,x_2 \in (0,1)$. 
Furthermore, assume that there exist sequences of distributions with $\frac{R_n}{n} \to x_1$, $x_2$, in probability, 
respectively, and $0 < \lim_{n\to\infty} \frac{E(N_n)}{n} < 1$. Then Lemma \ref{ApplicationsLemma003} can be applied 
in order to get sharp bounds in (\ref{lower_and_upper_FDR_bound}).

\par\bigskip\noindent
{\sc Proof of Lemma \ref{ApplicationsLemma003}.}
Let $A(K)$ be an open neighborhood of $K$ with $ \frac{h(x)}{x}  \geq d > 0$ on $A(K)$. Then 
\begin{equation*}
 \lim_{n \to \infty} FDR_n = \lim_{n \to \infty} E\left( \frac{V_n}{R_n} 1\left\{ \frac{R_n}{n} \in A(K) \right\} \right) 
\end{equation*}
holds. Introduce 
\begin{equation*}
 0 \leq Z_n := \frac{V_n}{\gamma(R_n)} 1\left\{ \frac{R_n}{n} \in A(K) \right\}  
= \frac{V_n}{n h(\frac{R_n}{n})} 1\left\{ \frac{R_n}{n} \in A(K) \right\} \leq \frac{1}{d}
\end{equation*}
which is a tight sequence of random variables. On the other hand the bounded sequence of random variables 
\begin{equation*}
 W_n := \frac{n h(\frac{R_n}{n})}{R_n} \to \frac{h(K)}{K}
\end{equation*}
converges in probability. Turning to distributional convergent subsequences of $Z_n$ we finally obtain
\begin{equation*}
 \lim_{n \to \infty} E\left( \frac{V_n}{R_n} \right) = \lim_{n \to \infty} E\left( Z_n W_n \right) 
= \frac{h(K)}{K} \lim_{n \to \infty} \frac{E(N_n)}{n}
\end{equation*}
since $ \lim_{n \to \infty} E(Z_n) = \lim_{n \to \infty} \frac{E(N_n)}{n} $ holds by Lemma \ref{LemmaCentral}. \hfill $\square$

\par\bigskip\noindent
{\sc Proof of Theorem \ref{ApplicationsTheorem001}.}
The present proof is given by several steps. Below the following elementary geometric property of the AORC is used. 
Consider for $0<\alpha<1$ the function $A(x,y):=\frac{x}{1-x}\frac{1-y}{y}$ for $(x,y)\in(0,1)^2$ in the plain. Then 
\begin{enumerate}
 \item $A(x,y)=\alpha$ iff $(x,y)$ belongs to the graph of $f_\alpha$, i.e. $f_\alpha(x)=y$. 
 \item $A(x,y)>\alpha$ iff $(x,y)$ is below the graph of $f_\alpha$, i.e. $f_\alpha(x)>y$. 
 \item $A(x,y)<\alpha$ iff $(x,y)$ lies above the graph of $f_\alpha$, i.e. $f_\alpha(x)<y$. 
\end{enumerate}

\noindent
{\bf (I)} We claim $0<\beta<1$. We first show $\beta>0$. Therefore, choose $n_0 = \lfloor \frac{n}{2} \rfloor$ and a Dirac uniform 
$DU(n,n_0)$ configuration. Then $\frac{R_n}{n} \geq \frac{1}{2}$ holds. Hence every $p$-value $p_i \leq f^{-1}(\frac{R_n}{n})$ will 
be rejected and in particular every $p$-value $p_i \leq f^{-1}(\frac{1}{2})$. Thus 
\begin{equation*}
 \beta \geq \liminf_{n\to\infty} FDR_{DU(n,n_0)} \geq \liminf_{n\to\infty} E\left( \frac{V_n}{n} \right) 
\geq \frac{1}{2}f^{-1}\left(\frac{1}{2}\right) > 0
\end{equation*}
for this sequence of Dirac uniform models. Let us next show $\beta<1$. Therefore, let $n_0$ be arbitrary and observe that $f$ lies 
above the Benjamini Hochberg rejection curve $f_{BH}(x)=(1+\epsilon)x$ of the BH test. By Lemma \ref{ApplicationsLemma001} (a)(i) 
we always have 
$$ FDR_{P_n} \leq \frac{1}{1+\epsilon}. 
$$ 

\noindent
{\bf (II)} The statement (\ref{beta_ApplicationsTheorem002}) of the Theorem is first proved for concave rejection curves $f$. 
Therefore, observe that under the distribution $P_n^{N_n}$ of $N_n$
\begin{equation*}
 FDR_{P_n} = \int FDR_{P_n}(n_0) P_n^{N_n}(dn_0) \leq \sup_{n_0 \leq n} FDR_{P_n}(n_0) 
\leq \sup_{n_0 \leq n} FDR_{DU(n,n_o)}
\end{equation*}
for all $P_n \in \mathcal{P}_n$, since the Dirac uniform configuration is least favorable for the FDR for SU tests 
with critical values fulfilling (\ref{SecApplications002}). Hence we get 
\begin{equation}\label{EqToShow001}
 \beta = \limsup_{n\to\infty} \sup_{n_0 \leq n} FDR_{DU(n,n_0)} 
\end{equation}
since $DU(n,n_0)$ belongs to $\mathcal{P}_n$ and thus there exists a subsequence $(n,n_0)=(n_n,n_{0,n})_n$, again denoted 
by $n$, with 
\begin{equation}\label{Subsequence001t}
 FDR_{DU(n,n_0)} \to \beta \quad \mbox{and} \quad \frac{n_0}{n} \to 1-y \quad \mbox{for } n\to\infty
\end{equation}
and some $1-y \in [0,1]$. Now we determine the limit of $FDR_{DU(n,n_0)}$ for every sequence $\frac{n_0}{n}\to1-y\in[0,1]$. 

1. For $\frac{n_0}{n}\to0$ observe that 
\begin{equation*}
 FDR_{DU(n,n_0)}= E\left( \frac{V_n}{n-n_0+V_n} \right) \leq \frac{n_0}{n} \underset{n\to\infty}{\longrightarrow} 0
\end{equation*}
and by a similar argument it follows that the limit of $FDR_{DU(n,n_0)}$ is continuous in $1-y$ at $0$.  

2. Let us consider $\frac{n_0}{n}\to 1-y \in (0,1)$ with positive $y$ and introduce the straight line 
$g(t)=y+(1-y)t$ which runs through the points $(0,y)$ and $(1,1)$ and has the unique crossing point $(x,K)$, $0<K<1$, 
with $f$. 
Observe that 
\begin{equation*}
 \hat F_n(\alpha_{R_n:n}) = f(\alpha_{R_n:n})
\end{equation*}
holds. Now let $Z$ be a weak accumulation point of $\alpha_{R_n:n}$. Since $f$ is continuous and $\hat F_n$ 
converges uniformly to $g$ with probability 1 the equation
\begin{equation*}
 g(Z) = y + (1-y) Z = f(Z)
\end{equation*}
follows. There is only one crossing point and thus $Z=x$ is constant for each weak accumulation point. 
From $f^{-1}(\frac{R_n}{n}) = \alpha_{R_n:n}$ we now deduce 
\begin{equation}\label{ApplicationTheoremProof001}
 \frac{R_n}{n} = f(\alpha_{R_n:n}) \to f(x) = K \quad a.s.
\end{equation}
at least along subsequences. 
Similar arguments were used by Scheer \cite{scheer}, Lemma 2.9, in his set up in order to prove that $\alpha_{R:n}$ 
converges to the crossing point $x$. 

A simple geometric argument for the gradient of $g$ yields 
\begin{equation*}
 \frac{1-y}{1} = \frac{f(x)-y}{x}
\end{equation*}
and hence 
\begin{equation}\label{ApplicationTheoremProof002}
 y= \frac{f(x)-x}{1-x}, \quad 1-y=\frac{1-f(x)}{1-x}.
\end{equation}
By Lemma \ref{ApplicationsLemma003} and subsequence arguments 
\begin{equation}\label{ApplicationTheoremProof002Darst1}
 \lim_{n\to\infty} FDR_{DU(n,n_0)} = \frac{f^{-1}(K)}{K} (1-y) = \frac{x}{f(x)} \frac{1-f(x)}{1-x} 
\end{equation}
holds for all sequences $\frac{n_0}{n} \to 1-y \in (0,1)$. 

3. Now consider $\frac{n_0}{n} \to 1$. Again $\frac{R_n}{n} \to 0$ in distribution. This follows from (\ref{ApplicationTheoremProof001}) 
by the monotonicity of $R_n$ in $n_0$ since $K\to 0$ holds for $y\to 0$. Observe next that for every $x>0$ we have 
$z:=f(x)>0$ and hence by (\ref{SecApplications002})
\begin{equation*}
 \frac{n\alpha_{j:n}}{j} \leq \frac{n\alpha_{\lfloor nz \rfloor:n}}{\lfloor nz \rfloor} 
= \frac{n f^{-1}(\frac{\lfloor nz \rfloor}{n})}{\lfloor nz \rfloor} < 1
\end{equation*}
holds for $j\leq \lfloor nz \rfloor$. By Lemma \ref{ApplicationsLemma001} (a)(i) 
\begin{equation*}
 FDR_{DU(n,n_0)} \leq \frac{h(\frac{\lfloor nz \rfloor}{n})}{\frac{\lfloor nz \rfloor}{n}} \frac{n_0}{n} 
+ P(R_n > \lfloor nz \rfloor ) \to \frac{f^{-1}(z)}{z} = \frac{x}{f(x)}
\end{equation*}
holds and hence 
\begin{equation*}
 \limsup_{n\to\infty} FDR_{DU(n,n_0)} \leq \lim_{x\searrow 0} \frac{x}{f(x)} 
= \lim_{x\searrow 0} \frac{x}{f(x)} \frac{1-f(x)}{1-x}
\end{equation*}
when $\frac{n_0}{n}\to 1$ since $\lim_{x\searrow 0} \frac{1-f(x)}{1-x} =1$.

Altogether, by subsequence arguments and (\ref{EqToShow001}) we directly obtain (\ref{beta_ApplicationsTheorem002})

\noindent
{\bf (III)} Let now $f$ be the general rejection curve of Theorem \ref{ApplicationsTheorem001}. Introduce \\
$\gamma:= \sup\{ \frac{x}{1-x}\frac{1-f(x)}{f(x)} : 0\leq x \leq x_0 \}$. 

(a) Claim: $\beta \leq \gamma$. The geometric arguments 1.-3. at the beginning of the proof imply $f\geq f_\gamma$ 
for the AORC with parameter $\gamma$. Next $f_\gamma$ is modified as follows. Let $x\mapsto r(x)$ be the tangent 
straight line attached at $f_\gamma$ at the point $(x_0,f_\gamma(x_0))$. Then 
\begin{equation}
 \tilde f (x) = \begin{cases}
                 f_\gamma(x) & x\leq x_0 \\
		 r(x) & x>x_0
                \end{cases}
\end{equation}
defines a concave rejection curve $\tilde f \leq f$ with $\tilde f(x_1)=1$ for some $x_0 < x_1 < 1$. 
By Lemma \ref{BeliebigeAblehnkurve}, given below, the $FDR(\tilde f)$ is always an upper bound of the 
$FDR(f)$. By step (II) the asymptotic worst case FDR of $FDR(\tilde f)$ equals 
\begin{equation}
 \sup\left\{ \frac{x}{1-x}\frac{1-\tilde f(x)}{\tilde f} : 0 \leq x \leq x_1 \right\} \geq \beta.
\end{equation}
It is easy to see that the left hand side is just $\gamma$. 

(b) The proof will be completed by showing $\beta \geq \gamma$. The inequality follows from the next special 
construction of mixture models, where now in contrast to part (II) of this proof the DU configuration is no 
longer least favorable. Introduce for each $0<\delta<1$ the straight line $g_\delta(t) = 1-\delta +t\delta$ 
and the intersection set $D_\delta := \{x\in(0,x_0) : g_\delta(x)=f(x)\}$. Note that $D_\delta$ is a compact 
set bounded away from $0$ and $x_0$ with $D_\delta \neq \emptyset$ and
\begin{equation}
 \bigcup_{0<\delta<1} D_\delta = (0,x_0).
\end{equation}
Thus, there exists some maximal element $x_\delta \in D_\delta$ with $x\leq x_\delta$ for all $x\in D_\delta$. 
In the next step we will introduce for each set $D_\delta$ a mixture model with appropriate distribution function 
\begin{equation}
 F_\delta(t) = \delta t + (1-\delta) G_\delta(t). 
\end{equation}
The non-uniform part $G_\delta$ will have the following properties:
\begin{itemize}
 \item[(i)] $G_\delta(x_\delta)=1$, which implies $F_\delta(t) = g_\delta(t)$ for all $t\geq x_\delta$ and 
		$F_\delta(x_\delta) = f(x_\delta)$. 
 \item[(ii)] $F_\delta(t) < f(t)$ for all $t< x_\delta$. 
\end{itemize}
In order to do so, let $h$ be a straigt line through $(x_\delta,f(x_\delta))$ with sufficiently large slope 
such that $h(t)<f(t)$ for all $0<t<x_\delta$. Here the left-sided differentiability of $f$ is used. Put now 
\begin{equation}
 F_\delta(t) = \max(\delta t, h(t)), \quad \mbox{for } t\leq x_\delta.
\end{equation}
This $x_\delta$ is the only cut point of $F_\delta$ and $f$. Consider now a mixture model $P_n$ with distribution 
function $f_\delta$. Similarly as in (II) we have $\frac{N_n}{n}\to\delta$ and via the cut point consideration 
we arrive at 
\begin{equation}
 FDR_{P_n} \longrightarrow \frac{x_\delta}{1-x_\delta} \frac{1-f(x_\delta)}{f(x_\delta)} =: \beta_\delta
\end{equation}
at least along suitable subsequences. A comparison of the line segment $\{(x,g_\delta(x)) : x \in D_\delta\}$ 
with the AORC $f_{\beta_\delta}$ yields $\frac{x}{1-x} \frac{1-f(x)}{f(x)} \leq \beta_\delta$ for each 
$x\in D_\delta$ if we take 1.-3. into account. This construction can be done for each set $D_\delta$. Thus, 
the proof of the inequality $\beta\geq\gamma$ is complete. 
\hfill $\square$

\par\bigskip
The next lemma is used in the last proof and may be of separate interest.

\begin{lemma}\label{BeliebigeAblehnkurve}
 Let $f:[0,1]\to \mathbb{R}$ be a non-decreasing rejection curve with $f(0)=0$, $f(x_0)=1$ for some $x_0<1$ and 
$f(x)\geq (1+\epsilon)x$ for some $\epsilon>0$. Moreover, let $r_0:[0,1]\to\mathbb{R}$ be a concave rejection 
curve and a lower bound of $f$. Under the BI Model for fixed $n$ and $N=n_0$ we have 
\begin{equation}
 FDR(f) \leq FDR(r_0) \leq FDR_{DU(n,n_0)}(r_0)
\end{equation}
for the FDR of the SU tests based on $f$ and $r_0$ via (\ref{critVal_rejection_curve_f}). 
\end{lemma}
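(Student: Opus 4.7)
The plan is to establish the two inequalities separately. For the second, $FDR(r_0)\leq FDR_{DU(n,n_0)}(r_0)$, I would invoke the Benjamini and Yekutieli result recalled in this paper after (\ref{SecApplications002}): under the BI model with $N=n_0$ fixed, the Dirac uniform configuration is least favorable for the FDR of any SU test whose critical values satisfy (\ref{SecApplications002}), i.e., $j\mapsto\alpha_{j:n}/j$ is non-decreasing. Since $r_0$ is a concave rejection curve with $r_0(0)=0$, its left-continuous inverse $r_0^{-1}$ is convex and passes through the origin, so $x\mapsto r_0^{-1}(x)/x$ is non-decreasing; consequently $\alpha_{j:n}^{r_0}/j = r_0^{-1}(j/n)/j$ is non-decreasing in $j$, and the hypothesis of that result is met.

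For the first inequality $FDR(f)\leq FDR(r_0)$, I would reuse the step-up leave-one-out identity already exploited in Remark \ref{RemarkzumCentralLemmaProof}. Conditioning on the labels so that $\epsilon_1=\ldots=\epsilon_{n_0}=1$, the true $p$-values are i.i.d. uniform and independent of the false ones. For each true index $i$ write $p^{(i)}$ for the vector obtained by replacing $p_i$ by $0$. The classical SU identity $R(p)=R(p^{(i)})$ on $\{p_i\leq \alpha_{R(p):n}\}$, together with the event equality $\{p_i\leq \alpha_{R(p):n}\}=\{p_i\leq \alpha_{R(p^{(i)}):n}\}$ (both set inclusions follow by counting values $\leq \alpha_{R(p^{(i)}):n}$), reduces the contribution of $p_i$ to an expression involving only $p^{(i)}$. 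Fubini with $p_i\sim U(0,1)$ independent of $p^{(i)}$ gives
\begin{equation*}
 E\!\left[\frac{1\{p_i\leq \alpha_{R^f:n}^f\}}{R^f}\right]
 = E\!\left[\frac{f^{-1}(S_i/n)}{S_i}\right],\quad S_i:=R^f(p^{(i)}),
\end{equation*}
and analogously $E[1\{p_i\leq \alpha_{R^{r_0}:n}^{r_0}\}/R^{r_0}]=E[r_0^{-1}(T_i/n)/T_i]$ with $T_i:=R^{r_0}(p^{(i)})$. Summing over true-null indices yields $FDR(f)=\sum_{i:\epsilon_i=1}E[f^{-1}(S_i/n)/S_i]$ and likewise for $FDR(r_0)$.

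These representations admit a pathwise comparison on the common probability space driven by the same $(\epsilon,\xi,U)$. Since $f\geq r_0$ implies $f^{-1}\leq r_0^{-1}$, the $r_0$-test rejects at least as many $p$-values on $p^{(i)}$ as the $f$-test, so $S_i\leq T_i$ almost surely. Two monotonicities then close the argument: (a) $f^{-1}(S_i/n)/S_i\leq r_0^{-1}(S_i/n)/S_i$ because $f\geq r_0$; (b) $r_0^{-1}(S_i/n)/S_i\leq r_0^{-1}(T_i/n)/T_i$ because $x\mapsto r_0^{-1}(x)/x$ is non-decreasing (from concavity of $r_0$) and $S_i\leq T_i$. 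Chaining and taking expectations delivers $FDR(f)\leq FDR(r_0)$. Note $p^{(i)}$ contains a zero, hence $S_i,T_i\geq 1$ and no division by zero occurs. The main obstacle to a more direct proof is that the naive pathwise bound $V^f/R^f\leq V^{r_0}/R^{r_0}$ need not hold -- enlarging critical values can add false rejections whose impact on the ratio is non-monotone -- and the leave-one-out device precisely sidesteps this by reducing each true-null contribution to a single scalar quantity which, thanks to the concavity of $r_0$, admits the required monotonicity.
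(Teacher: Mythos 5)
Your proposal is correct and follows essentially the same route as the paper's proof: the leave-one-out SU identity reduces each true-null contribution to $E[\alpha_{R(p^{(1)}):n}/R(p^{(1)})]$, after which the pointwise bound $f^{-1}\leq r_0^{-1}$, the monotonicity of $i\mapsto\alpha^{(0)}_{i:n}/i$ together with $R\leq R_0$, and the Benjamini--Yekutieli least-favorable DU result give the two inequalities in exactly the order you chain them.
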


\begin{proof}
 Let $\alpha_{i:n}$ and $\alpha_{i:n}^{(0)}$ be the critical values of the rejection curves $f$ and $r_0$, respectively, 
defined by (\ref{critVal_rejection_curve_f}). Define $R$ and $R_0$ as the number of rejections of the SU test based on 
$\alpha_{i:n}$ and $\alpha_{i:n}^{(0)}$, respectively. It is easy to see that $\alpha_{i:n} \leq \alpha_{i:n}^{(0)}$ and 
hence $R \leq R_0$ almost surely holds. Using the technique of Remark \ref{RemarkzumCentralLemmaProof} we obtain 
\begin{eqnarray*}
FDR(f) &=& n_0 E\left( \frac{\alpha_{R(p^{(1)}):n}}{R(p^{(1)})} \Big| \epsilon_1=1 \right) 
\leq n_0 E\left( \frac{\alpha^{(0)}_{R(p^{(1)}):n}}{R(p^{(1)})} \Big| \epsilon_1=1 \right) \\
&\leq& n_0 E\left( \frac{\alpha^{(0)}_{R_0(p^{(1)}):n}}{R_0(p^{(1)})} \Big| \epsilon_1=1 \right) 
\leq n_0 E_{DU(n,n_0)}\left( \frac{\alpha^{(0)}_{R_0(p^{(1)}):n}}{R_0(p^{(1)})} \Big| \epsilon_1=1 \right) \\
&=& FDR_{DU(n,n_0)}(r_0). 
\end{eqnarray*}
since $i\mapsto\frac{\alpha_{i:n}^{(0)}}{i}$ is non-decreasing and the $DU(n,n_0)$ configuration is least favorable for 
the FDR, cf. Benjamini and Yekutieli \cite{benjamini_yekutieli}.
\end{proof}

\begin{remark} 
Consider the BI Model and let the false $p$-values be independent and stochastically smaller than the uniform 
distribution. Let $FDR_{U(0,1)}$ denote the FDR under uniformly distributed false $p$-values. If we replace 
$r_0$ in Lemma \ref{BeliebigeAblehnkurve} by a convex rejection curve $r_0$ which is a upper bound of $f$, it 
is easy to see by similar arguments that 
\begin{equation}
 FDR(f) \geq FDR_{U(0,1)}(r_0)
\end{equation}
holds for the FDR of the SU tests based on $f$ and $r_0$, since $i\mapsto\frac{\alpha_{i:n}^{(0)}}{i}$ is non-increasing 
for the critical values $\alpha_{i:n}^{(0)}$ corresponding to the rejection curve $r_0$. According to Benjamini and 
Yekutieli \cite{benjamini_yekutieli}, here the uniform distribution is least favorable for the FDR under stochastically 
smaller $p$-values.  
\end{remark}

\medskip 
If (\ref{PRDS}) is non-increasing, then the $p$-values are called to be negative regression dependent on the subset 
of true null hypotheses (NRDS). Under this assumption the lower bound in (\ref{lower_and_upper_FDR_bound}) and Lemma 
\ref{ApplicationsLemma001} (b)(i) stay true, see Heesen \cite{heesen} for instance.

\par\bigskip\noindent
{\sc Proof of Theorem \ref{ApplicationsTheoremSD}.} 
By Gontscharuk \cite[Theorem 3.10]{gontscharuk} it is well known that 
\begin{equation*}
 E\left( \frac{V_{SD}}{R_{SD}}\Big| n_0, \bar f \right) \leq E\left( \frac{V_{SU}}{R_{SU}}\ \Big| n_0, \bar f \right)
\end{equation*}
holds for tests with critical values (\ref{SecApplications002}), see also Heesen \cite{heesen} Lemma 2.29. The inequality 
also follows from the technique used in Remark \ref{RemarkzumCentralLemmaProof}. 
 Hence 
\begin{equation}\label{ApplicationsTheoremSDProofgeaendert001}
 \limsup_{n\to \infty} \sup_{P_n \in \mathcal{P}_n} FDR_{P_n,SD} 
\leq \limsup_{n\to \infty} \sup_{P_n \in \mathcal{P}_n} FDR_{P_n}=:\beta.
\end{equation} 
In the proof of Theorem \ref{ApplicationsTheorem001} we already showed that $\beta$ is 
attained by $\frac{x}{1-x}\frac{1-f(x)}{f(x)}$ for some $x$. Thus, to show ''$=$'' on 
the left hand side in (\ref{ApplicationsTheoremSDProofgeaendert001}) it suffices to show 
that for all such $x$ there is some sequence of distributions $(P_n)_n$ so that $FDR_{P_n,SD}$ 
converges to $\frac{x}{1-x}\frac{1-f(x)}{f(x)}$. Therefore, let us now consider sequences 
of $DU(n,n_0)$ configurations with $\frac{n_0}{n}\to 1-y \in (0,1)$. Along the lines 
of the proof of Theorem \ref{ApplicationsTheorem001} we have 
\begin{equation}
 \frac{R_{SD,n}}{n} \to K \quad a.s. \ \mbox{with } 0<K<1.
\end{equation}
Observe that $0<K$ holds since $\frac{R_{SD,n}}{n} \geq \frac{n-n_0}{n} \to y >0$. Moreover, $K<1$ holds since $f(x)=1$ for all 
$x_0 \leq x$ for some $x_0<1$. Now with $x, f(x)$ and $y$ as in (\ref{ApplicationTheoremProof001}) and 
(\ref{ApplicationTheoremProof002}) in the proof of Theorem \ref{ApplicationsTheorem001} we have
\begin{eqnarray*}
FDR_{DU(n,n_0),SD} &=& E_{DU(n,n_0)}\left( \frac{V_{SD,n}}{R_{SD,n}} \right) 
= E_{DU(n,n_0)}\left( \frac{\frac{R_{SD,n}}{n} - \frac{n-n_0}{n}}{\frac{R_{SD,n}}{n}} \right) \\
&\underset{n\to\infty}{\longrightarrow}& \frac{K-y}{K} = \frac{x}{f(x)}\frac{1-f(x)}{1-x}
\end{eqnarray*}
by dominated convergence. Hence we have ``='' in (\ref{ApplicationsTheoremSDProofgeaendert001}) since the above formula holds for 
all $1-y \in (0,1)$ and the representation (\ref{beta_ApplicationsTheorem002}).
\hfill $\square$

\subsection{Proofs of Section \ref{SecAdaptiveControl}}

{\sc Proof of Theorem \ref{AdaptiveControlTheorem1}.} 
First observe that $FDR_{P_n}= 0 \leq \alpha$ obviously holds if $N_n=0$. Thus, without loss of generality 
let $N_n>0$ almost surely for all $n$. Conditioned under $\epsilon=(\epsilon_1,\ldots, \epsilon_n)$ we obtain
by Lemma \ref{LemmaCentral} (a) and (\ref{CritVal002}) 
\begin{equation}
  \frac{N_n}{n} = E\left( \frac{V}{n\hat \alpha_{R:n}} \Big| \epsilon \right) \geq E\left( \frac{\hat n_{0,n}}{n}\frac{V}{R\alpha} \Big| \epsilon \right) 
\end{equation}
for the reverse martingale model since the conditional case is also included. Thus by integration
\begin{eqnarray*}
 \alpha &\geq& E_{P_n}\left( \frac{\hat n_{0,n}}{N_n} \frac{V}{R} \right) 
 \geq E_{P_n}\left( (1-\delta) \frac{V}{R} 1\left\{ \frac{\hat n_{0,n}}{N_n} > 1-\delta \right\} \right) \\
&=& (1-\delta) E_{P_n} \left( \frac{V}{R} \right) - (1-\delta) E_{P_n}\left( \frac{V}{R} 1\left\{ \frac{\hat n_{0,n}}{N_n} \leq 1-\delta \right\} \right) \\
&\geq& (1-\delta) E_{P_n} \left( \frac{V}{R} \right) -(1-\delta) P_n\left( \frac{\hat n_{0,n}}{N_n} \leq 1-\delta \right)
\end{eqnarray*}
holds for every $\delta > 0$ and the statement follows by (\ref{AdaptiveControlTheorem1001}). 
\hfill $\square$

\par\bigskip\noindent
{\sc Proof of Proposition \ref{Proposition002}.} 
Introduce the set $A_n=\{ \hat\alpha_{R:n} < \lambda \}$. 
An inspection of the proof of Lemma \ref{Theorem003} (given below) yields 
\begin{equation*}
 E_{P_n}\left( \frac{V_n}{R_n}1_{A_n} \right) = \frac{\alpha}{\lambda}E_{P_n}\left( \frac{V_n(\lambda)}{\hat n_0}1_{A_n} \right)
\end{equation*}
 restricted on $A_n$. Define $S_n(\lambda) := \sum_{i=1}^n (1-\epsilon_i) 1\{p_i \leq \lambda\}$, then
\begin{equation*}
 \alpha \geq \limsup_{n\to\infty} \frac{\alpha(1-\lambda)}{\lambda} E_{P_n}\left( \frac{V_n(\lambda)}{n-V_n(\lambda)-S_n(\lambda)+\kappa_n} \right)
\end{equation*}
follows. First, let us consider case (ii). For each $\delta>0$ we have 
\begin{equation}\label{Proposition002Proof001}
 1 \geq \frac{(1-\lambda)}{\lambda} \limsup_{n\to\infty}   E_{P_n}\left( \frac{V_n(\lambda)/N_n}{1- V_n(\lambda)/N_n +\kappa_n/N_n + \delta}1_{A_n} \right).
\end{equation}
Observe next that $0 \leq V_n(\lambda)/N_n \leq 1$ is tight and we consider an arbitrary distributional cluster point $Z$ of $V_n(\lambda)/N_n$. 
The appertaining subsequence is for convenience also denoted by $\{n\}$, i.e. $V_n(\lambda)/N_n \to Z$ in distribution. 
Note that $E(Z) = \lambda$ and 
\begin{equation*}
 1 \geq \frac{1-\lambda}{\lambda} E\left(\frac{Z}{1-Z+\delta}\right)
\end{equation*}
hold. Now Jensen's inequality implies 
\begin{equation*}
 \frac{\lambda}{1-\lambda} \geq E\left( \frac{Z}{1-Z} \right) \geq \frac{\lambda}{1-\lambda}
\end{equation*}
when $\delta \searrow 0$. Since $x \mapsto \frac{x}{1-x}$ is strictly convex we have $Z=\lambda$ a.e. Since $Z$ was 
an arbitrary cluster point we conclude $V_n(\lambda)/N_n \to \lambda$ in $P_n$-probability. This statement implies 
the result
\begin{equation*}
 \frac{\hat n_0}{N_n} = \frac{N_n - V_n(\lambda) +n\kappa_n}{(1-\lambda) N_n} \to 1.
\end{equation*}
In case (i) the proof is similar. Note that the assumption then implies 
\begin{equation*}
 \frac{(n-N_n)-S_n(\lambda)}{N_n} \to 0
\end{equation*}
and we may proceed as in (\ref{Proposition002Proof001})
\hfill $\square$

\par\bigskip\noindent
{\sc Proof of Remark \ref{RemarkAsymptoticVariability}.} 
Observe that 
\begin{eqnarray*}
 \frac{\hat n_0}{N_n} &\geq& \frac{N_n - \sum_{i=1}^n \epsilon_i 1\{ p_i \leq \lambda \} + \kappa_n}{N_n (1-\lambda)} \\
&\geq& \frac{1 - \frac{1}{N_n}\sum_{i=1}^n \epsilon_i 1\{ p_i \leq \lambda \} }{1-\lambda} \to 1 
\end{eqnarray*}
holds, where the right hand side converges by (\ref{AboutAsymptoticFDR}) in probability w.r.t. the conditional convergence.
\hfill $\square$

\par\bigskip\noindent
{\sc Proof of Theorem \ref{AdaptiveControlTheorem1Part2}.} 
Conditioned under $\epsilon=(\epsilon_1,\ldots, \epsilon_n)$ we directly obtain 
by Lemma \ref{LemmaCentral} (b) and (\ref{AdaptiveControlTheorem1002}) that
\begin{equation}
 \frac{N_n}{n} \geq E\left( \frac{\hat n_{0,n}}{n}\frac{V}{R\alpha} \Big| \epsilon \right) 
\end{equation}
holds and the statement follows by the same arguments as in the proof of Theorem \ref{AdaptiveControlTheorem1}.
\hfill $\square$

\par\bigskip\noindent
{\sc Proof of Lemma \ref{Theorem003}.} 
Conditioned under $(\epsilon_i)_{i\leq n}=\epsilon$, $(p_i\, : \, \epsilon_i=0)=\bar f$ and $(n\hat F_n(t))_{t\geq \lambda})= (n(t))_{t\geq \lambda}$ 
we have exactly $V(\lambda)$ true $p$-values smaller or equal to $\lambda$ where $V(\lambda)$ is a fixed number. 
Without restriction we assume $N \geq V(\lambda)>0$ since everything is obviously fine for the excluded cases.
Let us now consider new rescaled $p$-values $q_i$, $i=1,\ldots, n(\lambda)$ defined by 
\begin{equation*}
 q_{i:n(\lambda)} := \frac{p_{i:n}}{\lambda}, \quad i=1,\ldots,n(\lambda).
\end{equation*}
When a new $p$-values $q_i$ corresponds to a true null hypothesis it is again uniformly distributed on $(0,1)$ and 
$ \frac{1\{q_i\leq t\}}{t} $ is a reverse martingale with respect to the reverse filtration 
$\mathcal{F}_t^q = \sigma(1\{q_j\leq s\} \, : \, 1\leq j \leq n(\lambda), s\geq t)$ under the above conditional assumption.
The exact positions of the $V(\lambda)$ true $p$-values in $(q_1,\ldots, q_{n(\lambda)})$ does not matter for our considerations. 
We now apply Lemma \ref{LemmaCentral} (a) for the SU multiple test with critical values 
\begin{equation*}
 \alpha_{i:n(\lambda)}^{(q)} := \frac{i}{n(\lambda)}\alpha' \quad 
\mbox{and } \alpha':= \frac{n(\lambda)}{\lambda \hat n_0} \alpha
\end{equation*}
on the $q$'s. The data dependent level $\alpha'$ only depends by assumption (A1) on the information 
given by $\mathcal{F}_{\lambda}$. Conditionally under $\mathcal{F}_{\lambda}$ we have a regular non data dependent 
SU procedure on the $q$'s. Let $R_q$ and $V_q$ denote the number of rejections and false rejections respectively 
by the above SU test. Observe that 
\begin{equation}\label{Theorem003Bew001}
 E\left( \frac{V_q}{R_q} \Big|\epsilon, \bar f, (n(t))_{t\geq \lambda}  \right) 
= \frac{V(\lambda)}{n(\lambda)} \min(\alpha',1) 
\end{equation}
holds by Lemma \ref{LemmaCentral} (a). Obviously (\ref{Theorem003Bew001}) is $\frac{V(\lambda)}{n(\lambda)}$ in case $\alpha' \geq 1$.

Now observe that 
\begin{eqnarray*}
 R_q &=& \max\{i\leq n(\lambda) \, : \, q_{i:n(\lambda)} \leq \alpha_{i:n(\lambda)}^{(q)} \} \\
&=& \max\left\{i\leq n(\lambda) \, : \, \frac{p_{i:n}}{\lambda} \leq  \frac{i}{n(\lambda)} \frac{n(\lambda)}{\lambda \hat n_0} \alpha \right\} \\
&=& \max\left\{ i\leq n \, : \, p_{i:n}\leq  \left(\frac{i}{\hat n_0} \alpha\right) \wedge \lambda \right\} =R 
\end{eqnarray*}
and hence $V_q=V$ since both tests, belonging to $R$ and $R_q$, are rejecting the same hypotheses. 
Thus by (\ref{Theorem003Bew001}) we get
\begin{eqnarray*}
   E\left( \frac{V}{R} \right) 
&=& E\left( E\left( \frac{V}{R}\, \Big|\epsilon, \bar f, (n(t))_{t\geq \lambda}  \right) \right)\\
&=& E\left( E\left( \frac{V_q}{R_q} \, \Big| \epsilon, \bar f, (n(t))_{t\geq \lambda}  \right) \right)\\
&=& E\left( \frac{V(\lambda)}{n(\lambda)}\min(\alpha',1) \right) 
 =\frac{\alpha}{\lambda} E\left( V(\lambda) \min\left\{\frac{1}{\hat n_0}, \frac{\lambda}{n\hat F_n(\lambda) \alpha}\right\} \right).
\hfill \quad \quad \square
\end{eqnarray*}

\par\bigskip\noindent
{\sc Proof of Proposition \ref{Proposition001}.} 
Choose $N=n$ and $p_i=U$ for all $i\leq n$. Thus $V(\lambda)=n1\{U\leq\lambda\}$ and $n\hat F_n(\lambda)=n1\{U\leq\lambda\}$.
The exact FDR formula (\ref{OurCondition001}) yields 
\begin{equation*}
 \alpha \geq FDR = \frac{\alpha}{\lambda} n\lambda \min\left( \frac{1}{\hat n_0(\vec 1)} ,\frac{\lambda}{n\alpha}  \right)
\end{equation*}
where $\hat n_0(\vec 1)$ stands for the value of the estimator when $\hat F_n(\rho) = 1$ for all $\rho\geq \lambda$. Thus 
$\frac{n}{\hat n_0(\vec 1)} \leq 1$ holds. By our assumption we have $\hat n_0 \geq \hat n_0(\vec 1)$. 
\hfill $\square$

\par\bigskip\noindent
We need the following ``balayage`` lemma for the proof of Theorem \ref{Theorem004}. 
\begin{lemma}\label{LemmaforTheorem004}
 Let $f:\{0,1,\ldots, m\} \to \mathbb{R}$ be a convex function and let $P=\sum_{j=1}^m p_j \epsilon_j$ be a distribution on 
$\{0,1,\ldots, m\}$, where $\epsilon_j$ denotes the Dirac distribution on $\{j\}$. Then $E_P(id) = E_{P'}(id)$ and 
$E_P(f) \leq E_{P'}(f)$ hold for the distribution 
$$P'= \left(1-\sum_{j=1}^m \frac{j}{m}p_j\right)\epsilon_0 + \sum_{j=1}^m \frac{j}{m}p_j \epsilon_m.$$
\end{lemma}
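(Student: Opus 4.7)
The plan is to reduce the lemma to the classical two-point balayage (Hardy--Littlewood majorization by the extreme points). The construction of $P'$ is exactly the result of replacing each Dirac mass $\epsilon_j$ by the unique two-point measure on $\{0,m\}$ with the same mean, namely $\frac{m-j}{m}\epsilon_0+\frac{j}{m}\epsilon_m$, and then pooling the resulting masses at $0$ and at $m$.

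First I would verify the mean identity by a direct computation: $E_{P'}(id)=0\cdot\bigl(1-\sum_j\tfrac{j}{m}p_j\bigr)+m\cdot\sum_j\tfrac{j}{m}p_j=\sum_j j\,p_j=E_P(id)$. Next, for every $j\in\{0,1,\ldots,m\}$ I would write $j=\frac{m-j}{m}\cdot 0+\frac{j}{m}\cdot m$ and apply convexity of $f$ pointwise to obtain
\begin{equation*}
f(j)\leq \tfrac{m-j}{m}f(0)+\tfrac{j}{m}f(m).
\end{equation*}
This is the only place convexity enters and it is completely elementary; the inequality trivially becomes equality at $j=0$ and $j=m$.

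Integrating this pointwise inequality against $P$ and using $\sum_{j=1}^m p_j=1$ gives
\begin{equation*}
E_P(f)\leq f(0)\sum_{j=1}^m p_j\tfrac{m-j}{m}+f(m)\sum_{j=1}^m p_j\tfrac{j}{m}=\Bigl(1-\sum_{j=1}^m\tfrac{j}{m}p_j\Bigr)f(0)+\Bigl(\sum_{j=1}^m\tfrac{j}{m}p_j\Bigr)f(m),
\end{equation*}
and the right-hand side is by definition $E_{P'}(f)$. This completes the argument.

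There is no real obstacle here; the only thing to take care of is matching the indexing in the definition of $P$ (the sum starts at $j=1$, so $P$ has no mass at $0$) with the pooled two-point representation of $P'$, and checking that the arithmetic $\sum p_j(m-j)/m=1-\sum p_j j/m$ uses $\sum p_j=1$ in the right way. Everything else is direct. The balayage identity also makes clear that $P'$ is the unique two-point measure on $\{0,m\}$ with mean $E_P(id)$, so it is the ``extremal'' majorant in convex order among measures supported in $\{0,\ldots,m\}$ with that mean, which is precisely how the lemma will be invoked in the proof of Theorem~\ref{Theorem004}.
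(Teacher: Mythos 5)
Your proof is correct and is essentially identical to the paper's argument: both verify the mean identity directly and then use the pointwise convexity bound $f(j)\leq \frac{m-j}{m}f(0)+\frac{j}{m}f(m)$, summed against $P$ with $\sum_j p_j=1$. Nothing further is needed.
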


\begin{proof}
$E_P(id) = E_{P'}(id)$ obviously holds by the definition of $P'$. 
Then by the convexity of $f$ we have 
\begin{eqnarray*}
 E(f) &=& \sum_{j=0}^m f(j)p_j \leq \sum_{j=0}^m \left( \frac{m-j}{m} f(0) p_j + \frac{j}{m}f(m)p_j \right)\\
&=& \left(1-\sum_{j=1}^m \frac{j}{m}p_j\right) f(0) + \sum_{j=1}^m \frac{j}{m}p_j f(m) = E_{P'}(f).
\end{eqnarray*}
\end{proof}

\par\bigskip\noindent
{\sc Proof of Theorem \ref{Theorem004}.} 
Throughout the condition (\ref{verification}) will be verified. For the portion $\widetilde G_i$ of true $p$-values 
introduce the quantities
\begin{equation}
 V_i(\lambda) :=\sum_{j\, : \, p_j \in \widetilde G_i} 1\{p_j \leq \lambda\}, \quad N_i := |\widetilde G_i|
\end{equation}
with $V(\lambda)= \sum_{i=1}^k V_i(\lambda)$. Whenever $\widetilde G_i \neq \emptyset$ holds select one true $p$-value 
$\widetilde p_i \in \widetilde G_i$. Observe that the $\widetilde  p_i$ are conditionally independent given $\epsilon$. 
Under that condition we have for (\ref{Grouping002})
\begin{eqnarray*}
 E\left( \frac{V(\lambda)}{\hat n_0(\kappa)} \Big| \epsilon \right) 
&=& (1-\lambda) E\left( \frac{\sum_{i=1}^k V_i(\lambda)}{n-n\hat F_n(\lambda) +\kappa} \Big| \epsilon \right) \\
&\leq& (1-\lambda) E\left( \frac{\sum_{i=1}^k V_i(\lambda)}{N-\sum_{i=1}^k V_i(\lambda)+\kappa} \Big| \epsilon \right)=(\star). 
\end{eqnarray*}
In the next step we are going to condition under $a=\sum_{i=2}^k V_i(\lambda)$. Note that then $V_1(\lambda)$ is a 
$B(N_1,\lambda)$ distributed random variable. By Lemma \ref{LemmaforTheorem004}, $V_1(\lambda)$ can be substituted 
by the worst case random variable $N_1 1\{\widetilde p_1 \leq \lambda\}$, i.e. 
\begin{equation*}
 E\left( \frac{ V_1(\lambda)+a}{N-V_1(\lambda)-a+\kappa} \Big| \epsilon,a \right) 
\leq E\left( \frac{ N_1 1\{\widetilde p_1 \leq \lambda\}+a}{N-N_1 1\{\widetilde p_1 \leq \lambda\}-a+\kappa} \Big| \epsilon,a \right).
\end{equation*}
If we proceed $k$ times we arrive at the upper bound 
\begin{eqnarray*}
(\star) &\leq& (1-\lambda) E\left( \frac{\sum_{i=1}^k N_i 1\{\widetilde p_i \leq \lambda\}}{N-\sum_{i=1}^k N_i 1\{\widetilde p_i \leq \lambda\}+\kappa} \Big| \epsilon \right)\\
&\leq& (1-\lambda) E\left( \frac{\sum_{i=1}^k m 1\{\widetilde p_i \leq \lambda\}}{N-\sum_{i=1}^k m 1\{\widetilde p_i \leq \lambda\}+\kappa} \Big| \epsilon \right)\\
&=& (1-\lambda) E\left( \frac{\sum_{i=1}^k 1\{\widetilde p_i \leq \lambda\}}{\frac{N}{m}-\sum_{i=1}^k  1\{\widetilde p_i \leq \lambda\}+\frac{\kappa}{m}} \Big| \epsilon \right)\\
&\leq& (1-\lambda) E\left( \frac{\sum_{i=1}^k 1\{\widetilde p_i \leq \lambda\}}{k+1-\sum_{i=1}^k  1\{\widetilde p_i \leq \lambda\}} \Big| \epsilon \right)
\leq \lambda
\end{eqnarray*}
since $x \mapsto \frac{x}{N-x+\kappa}$ is increasing for $x\in [0,N+\kappa)$ and  
$m\sum_{i=1}^k 1\{\widetilde p_i \leq \lambda\} \leq km < km+m+N-N_{min} \leq N+\kappa$ holds. 
Moreover, $ \frac{N+\kappa}{m} \geq \frac{N-N_{min} + m + r +n}{m} = \frac{N-N_{min} + m + mk}{m} \geq k+1 $ holds and 
the last inequality follows by the well known result for Binomial variables 
\begin{equation}\label{Theorem004Proof001}
 E\left( \frac{\sum_{i=1}^k 1\{\widetilde p_i \leq \lambda\}}{k+1-\sum_{i=1}^k  1\{\widetilde p_i \leq \lambda\}} \Big| \epsilon \right) 
= \frac{\lambda}{1-\lambda} (1-\lambda^k).
\end{equation}
\hfill $\square$

\par\bigskip\noindent
{\sc Proof of Example \ref{Example6-1}.} 
The proof for the FWER is mostly included in the proof of Theorem \ref{Theorem004}. Notice first that 
\begin{equation}
 \frac{\alpha}{\lambda} \frac{V(\lambda)}{\hat n_0(\kappa)} = \frac{\alpha (1-\lambda)}{\lambda} \frac{X}{k+1-X} \leq 1
\end{equation}
always holds under our assumptions, where $X = \sum_{i=1}^k 1\{\widetilde p_i \leq \lambda\}$ is a binomial 
variable. Thus, we have equality in (\ref{OurCondition001}). By an inspection of the proof above we arrive at 
a sequence of equality with 
\begin{equation}
 FWER = \frac{\alpha(1-\lambda)}{\lambda} E\left( \frac{X}{k+1-X} \right)
\end{equation}
and (\ref{Theorem004Proof001}) can be applied. \hfill $\square$

\section*{Acknowledgements}
The authors are grateful to Helmut Finner and Julia Benditkis for many stimulating discussions and to Veronika Gontscharuk who provided 
a program for the exact computation of the SU FDR for arbitrary critical values in a DU setting, see Figure \ref{fig:Bild1}. Furthermore, 
the authors gratefully acknowledge the helpful suggestions of the referees, who also referred to \cite{guo_sarkar}, the associate editor 
and editor. The authors also wishes to thank the Deutsche Forschungsgemeinschaft, DFG, for financial support.

\end{document}